\newcommand{\doitext}{doi:}
\newcommand*{\doi}{%
  \begingroup 
  \lccode`\~=`\#\relax 
  \lowercase{\def~{\#}}%
  \lccode`\~=`\_\relax
  \lowercase{\def~{\_}}%
  \lccode`\~=`\<\relax 
  \lowercase{\def~{\textless}}%
  \lccode`\~=`\>\relax 
  \lowercase{\def~{\textgreater}}%
  \lccode`\~=0\relax 
  \catcode`\#=\active 
  \catcode`\_=\active 
  \catcode`\<=\active 
  \catcode`\>=\active 
  \@doi
}
\def\@doi#1{%
  \let\#\relax
  \let\_\relax
  \let\textless\relax 
  \let\textgreater\relax 
  \edef\x{\toks0={{#1}}}%
  \x
  \edef\#{\@percentchar23}%
  \edef\_{_}%
  \edef\textless{\@percentchar3C}
  \edef\textgreater{\@percentchar3E}
  \edef\x{\toks2={\noexpand\href{\doiurl#1}}}%
  \x
  \edef\x{\endgroup\doitext\the\toks2 \the\toks0}%
  \x
}
\newcommand{\myurl}[1]{\href{#1}{#1}}
\newcommand{\journaltitle}[1]{\emph{#1}}
\newcommand{\booktitle}[1]{\emph{#1}}
\newcommand{\myvolume}[1]{\textbf{#1}}
\newcommand{\volumeyearpages}[3]{\textbf{#1} (#2), #3}
\newcommand{\volumeissueyearpages}[4]{\textbf{#1}:#2 (#3), #4}
\newcommand{\eqdef}{\coloneqq}
\newcommand{\bC}{\mathbb{C}}
\newcommand{\bD}{\mathbb{D}}
\newcommand{\bZ}{\mathbb{Z}}
\newcommand{\bN}{\mathbb{N}}
\newcommand{\bNz}{\mathbb{N}_0}
\newcommand{\bR}{\mathbb{R}}
\newcommand{\bT}{\mathbb{T}}
\newcommand{\cA}{\mathcal{A}}
\newcommand{\cB}{\mathcal{B}}
\newcommand{\cD}{\mathcal{D}}
\newcommand{\cF}{\mathcal{F}}
\newcommand{\cR}{\mathcal{R}}
\newcommand{\cT}{\mathcal{T}}
\newcommand{\cU}{\mathcal{U}}
\newcommand{\fM}{\mathfrak{M}}
\newcommand{\fC}{\mathfrak{C}}
\newcommand{\fL}{\mathfrak{L}}
\newcommand{\econstant}{\operatorname{e}}
\newcommand{\imagunit}{\operatorname{i}}
\newcommand{\al}{\alpha}
\newcommand{\dif}{\mathrm{d}}
\newcommand{\la}{\lambda}
\newcommand{\tht}{\vartheta}
\newcommand{\Mat}{\mathcal{M}}
\renewcommand{\Re}{\operatorname{Re}}
\renewcommand{\Im}{\operatorname{Im}}
\newcommand{\clos}{\operatorname{clos}}
\newcommand{\lin}{\operatorname{span}}
\newcommand{\RB}{\operatorname{RB}}
\newcommand{\RBC}{\operatorname{RBC}}
\newcommand{\Gaussian}{\gamma}
\newcommand{\conjw}{\overline{w}}
\newcommand{\conjz}{\overline{z}}
\newcommand{\Berezin}{\operatorname{Ber}}
\newcommand{\Radialization}{\operatorname{Rad}}
\colorlet{lightblue}{blue!20}
\newtheorem{thm}{Theorem}[section]
\newtheorem{prop}[thm]{Proposition}
\newtheorem{conj}[thm]{Conjecture}
\newtheorem{lem}[thm]{Lemma}
\newtheorem{cor}[thm]{Corollary}
\theoremstyle{definition}
\newtheorem{example}[thm]{Example}
\newtheorem{defn}[thm]{Definition}
\newtheorem{remark}[thm]{Remark}
\author{Egor A. Maximenko,
Ana Mar\'{i}a Teller\'{i}a-Romero}
\title{Radial operators on polyanalytic\\Bargmann--Segal--Fock spaces}
\begin{document}%
\let\newpage\relax\maketitle

\begin{center}
Dedicated to Nikolai L. Vasilevski,
our guide in this area of mathematics,\\
on the occasion of his 70th birthday
\end{center}

\begin{abstract}
The paper considers bounded linear radial operators
on the polyanalytic Fock spaces $\mathcal{F}_n$
and on the true-polyanalytic Fock spaces $\mathcal{F}_{(n)}$.
The orthonormal basis of normalized complex Hermite polynomials
plays a crucial role in this study;
it can be obtained by the orthogonalization
of monomials in $z$ and $\overline{z}$.
First, using this basis,
we decompose the von Neumann algebra
of radial operators, acting in $\mathcal{F}_n$,
into the direct sum of some matrix algebras,
i.e. radial operators are represented as matrix sequences.
Secondly, we prove that the radial operators,
acting in $\mathcal{F}_{(n)}$,
are diagonal with respect to the basis
of the complex Hermite polynomials belonging to $\mathcal{F}_{(n)}$.
We also provide direct proofs
of the fundamental properties of $\mathcal{F}_n$
and an explicit description of the C*-algebra
generated by Toeplitz operators in $\mathcal{F}_{(n)}$,
whose generating symbols are radial, bounded,
and have finite limits at infinity.

\medskip\noindent
AMS Subject Classification (2010):
Primary 22D25; Secondary 30H20, 47B35.

\medskip\noindent
Keywords: radial operator,
polyanalytic function,
Bargmann--Segal--Fock space,
von Neumann algebra.
\end{abstract}

\section{Introduction and main results}

The theory of bounded linear operators
in spaces of analytic functions
has been intensively developed since the 1980s.
In particular, the general theory of operators
on the Bargmann-Segal-Fock space
(for the sake of brevity, we will say just ``Fock space'')
is explained in the book of Zhu~\cite{Zhu2012book}.
Nevertheless, the complete understanding of the spectral properties
is achieved only for some special classes of operators,
in particular, for Toeplitz operators with generating symbols
invariant under some group actions,
see Vasilevski~\cite{Vasilevski2008book},
Grudsky, Quiroga-Barranco, and Vasilevski~\cite{GrudskyQuirogaVasilevski2006},
Dawson, \'{O}lafsson, and Quiroga-Barranco~\cite{DawsonOlafssonQuiroga2015}.
The simplest class of this type consists of Toeplitz operators
with bounded radial generating symbols.
Various properties of these operators
(boundedness, compactness, and eigenvalues)
have been studied by many authors,
see~\cite{KorenblumZhu1995,GrudskyVasilevski2002,Zorboska2003,Quiroga2016}.
The C*-algebra generated by such operators
was explicitly described in \cite{Suarez2008,GrudskyMaximenkoVasilevski2013}
for the nonweighted Bergman space,
in \cite{BauerHerreraVasilevski2014,HerreraVasilevskiMaximenko2015} for the weighted Bergman space,
and in \cite{EsmeralMaximenko2016} for the Fock space.
Loaiza and Lozano \cite{LoaizaLozano2013,LoaizaLozano2014}
studied radial Toeplitz operators in harmonic Bergman spaces.

The spaces of polyanalytic functions,
related with Landau levels,
have been used in mathematical physics since 1950s;
let us just mention a couple of recent papers:
\cite{HaimiHedenmalm2013,AliBagarelloGazeau2015}.
A connection of these spaces
with wavelet spaces and signal processing
is shown by Abreu~\cite{Abreu2010}
and Hutn\'{i}k~\cite{Hutnik2008,Hutnik2010}.
Various mathematicians contributed
to the rigorous mathematical theory of
square-integrable polyanalytic functions.
Our research is based on results and ideas from
\cite{Balk1991,Vasilevski2000,Shigekawa1987,AskourIntissarMouayn1997,AbreuFeichtinger2014}.

Hutn\'{i}k, Hutn\'{i}kov\'{a}, Ram\'{i}rez Ortega,
S\'{a}nchez-Nungaray, Loaiza, and other authors
\cite{HutnikHutnikova2015,RamirezSanchez2015,LoaizaRamirez2017,SanchezGonzalezLopezArroyo2018,HutnikMaximenkoMiskova2016} studied vertical and angular Toeplitz operators in polyanalytic
and true-polyanalytic spaces, Bergman and Fock.
In particular, vertical Toeplitz operators in the
$n$-analytic Bergman space over the upper half-plane
are represented in \cite{RamirezSanchez2015}
as $n\times n$ matrices
whose entries are continuous functions on $(0,+\infty)$,
with some additional properties at $0$ and $+\infty$.

Recently, Rozenblum and Vasilevski
\cite{RozenblumVasilevski2018}
investigated Toeplitz operators with distributional symbols
and showed that Toeplitz operators
in true-polyanalytic Fock spaces
are equivalent to some Toeplitz operators
with distributional symbols in the analytic Fock space.

In this paper, we analyze radial operators in Fock spaces
of polyanalytic or true-polyanalytic functions.
We denote by $\mu$ the Lebesque measure on the complex plane
and by $\gamma$ the Gaussian measure on the complex plane:
\[
\dif\gamma(z)=\frac{1}{\pi}\econstant^{-|z|^2}\dif\mu(z).
\]
In what follows, we work with the space $L^2(\bC,\gamma)$
and its subspaces, and denote its norm by $\|\cdot\|$.
A very useful orthonormal basis in $L^2(\bC,\gamma)$
is formed by complex Hermite polynomials $b_{j,k}$,
$j,k\in\bNz\eqdef\{0,1,2,\ldots\}$;
see Section~\ref{sec:basis}.

Given $n$ in $\bN\eqdef\{1,2,\ldots\}$,
let $\cF_n$ be the subspace of $L^2(\bC,\gamma)$
consisting of all $n$-analytic functions belonging to $L^2(\bC,\gamma)$.
It is known that $\cF_n$ is a closed subspace of $L^2(\bC,\gamma)$;
moreover, it is a RKHS (reproducing kernel Hilbert space).
We denote by $\cF_{(n)}$ the orthogonal complement of
$\cF_{n-1}$ in $\cF_{n}$.

For every $\tau$ in $\bT\eqdef\{z\in\bC\colon\ |z|=1\}$,
let $R_{n,\tau}$ be the rotation operator acting in $\cF_n$ by the rule
\[
(R_{n,\tau}f)(z)\eqdef f(\tau^{-1}z).
\]
The family $(R_{n,\tau})_{\tau\in\bT}$ is a
unitary representation of the group $\bT$ in the space $\cF_n$.
We denote by $\cR_n$ the commutant of $\{R_{n,\tau}\colon\ \tau\in\bT\}$ in $\cB(\cF_n)$,
i.e. the von Neumann algebra that consists of all bounded linear operators
acting in $\cF_n$ that commute with $R_{n,\tau}$
for every $\tau$ in $\bT$.
In other words, the elements of $\cR_n$ are the operators
intertwining the representation $(R_{n,\tau})_{\tau\in\bT}$
of the group $\bT$.
The elements of $\cR_n$ are called
\emph{radial} operators in $\cF_n$.

In a similar manner,
we denote by $R_{(n),\tau}$ the rotation operators acting in $\cF_{(n)}$
and by $\cR_{(n)}$ the von Neumann algebra
of radial operators in $\cF_{(n)}$.

The principal tool in the study of $\cR_n$
is the following orthogonal decomposition of $\cF_n$:
\begin{equation}\label{eq:cFn_decomposition_into_diagonal_subspaces}
\cF_n=\bigoplus_{d=-n+1}\cD_{d,\min\{n,n+d\}}.
\end{equation}
Here the ``truncated diagonal subspaces'' $\cD_{d,m}$ are defined
as the
linear spans of $b_{j,k}$ with $j-k=d$
and $0\le j,k<m$.
Another description of $\cD_{d,m}$ is given in Proposition~\ref{prop:truncated_diagonals_in_polar_coordinates}.

The main results of this paper are explicit decompositions
of the von Neumann algebras $\cR_n$ and $\cR_{(n)}$
into direct sums of factors.
The symbol $\cong$ means that the algebras are
isometrically isomorphic.

\begin{thm}\label{thm:radial_polyanalytic_Fock}
Let $n\in\bN$.
Then $\cR_n$ consists of all operators belonging to $\cB(\cF_n)$
that act invariantly on the subspaces
$\cD_{d,\min\{n,n+d\}}$, for $d\ge-n+1$.
Furthermore,
\[
\cR_n
\cong\bigoplus_{d=-n+1}^\infty \cB(\cD_{d,\min\{n,n+d\}})
\cong \bigoplus_{d=-n+1}^\infty \Mat_{\min\{n,n+d\}}.
\]
\end{thm}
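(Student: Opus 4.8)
The plan is to route everything through the common eigenvectors of the rotations. First I would record the key eigen-relation: since $b_{j,k}$ is a linear combination of monomials $z^a\overline{z}^c$ with $a-c=j-k$, replacing $z$ by $\tau^{-1}z$ (and hence $\overline z$ by $\tau\overline z$, as $|\tau|=1$) multiplies each such monomial by $\tau^{-(j-k)}$. Thus $R_{n,\tau}b_{j,k}=\tau^{k-j}b_{j,k}$ for every $\tau\in\bT$, so each $b_{j,k}$ is a joint eigenvector of the family $(R_{n,\tau})_{\tau\in\bT}$ with eigen-character $\tau\mapsto\tau^{-d}$ depending only on $d=j-k$. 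Combined with the orthogonal decomposition~\eqref{eq:cFn_decomposition_into_diagonal_subspaces}, this identifies each truncated diagonal with the spectral subspace
\[
\cD_{d,\min\{n,n+d\}}=\{f\in\cF_n\colon\ R_{n,\tau}f=\tau^{-d}f\ \text{ for all }\tau\in\bT\},
\]
and shows that these subspaces are mutually orthogonal and exhaust $\cF_n$.

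With this in hand, the first assertion says that $T\in\cB(\cF_n)$ commutes with every $R_{n,\tau}$ if and only if $T$ leaves each $\cD_{d,\min\{n,n+d\}}$ invariant. The implication from invariance to commutation is immediate: on $\cD_{d,\min\{n,n+d\}}$ the operator $R_{n,\tau}$ acts as the scalar $\tau^{-d}$, so an operator preserving this subspace automatically commutes with $R_{n,\tau}$ there, and hence on all of $\cF_n$. For the converse, take $f\in\cD_{d,\min\{n,n+d\}}$; then $R_{n,\tau}(Tf)=TR_{n,\tau}f=\tau^{-d}Tf$ for every $\tau$, and expanding $Tf$ along~\eqref{eq:cFn_decomposition_into_diagonal_subspaces} while using that the characters $\tau\mapsto\tau^{-d}$ are pairwise distinct forces every component of $Tf$ outside $\cD_{d,\min\{n,n+d\}}$ to vanish. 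Hence $Tf\in\cD_{d,\min\{n,n+d\}}$, the desired invariance.

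For the isomorphisms I would exploit that~\eqref{eq:cFn_decomposition_into_diagonal_subspaces} is an \emph{orthogonal} decomposition. Every $T\in\cR_n$ is then block-diagonal and is determined by the sequence of its restrictions $\bigl(T|_{\cD_{d,\min\{n,n+d\}}}\bigr)_{d\ge-n+1}$. I would check that the map $T\mapsto\bigl(T|_{\cD_{d,\min\{n,n+d\}}}\bigr)_d$ is a unital $*$-homomorphism into the $\ell^\infty$-direct sum $\bigoplus_{d}\cB(\cD_{d,\min\{n,n+d\}})$: it is isometric because orthogonality of the blocks gives $\|T\|=\sup_d\|T|_{\cD_{d,\min\{n,n+d\}}}\|$, it is injective because the $b_{j,k}$ span a dense subspace, and it is surjective because any uniformly bounded sequence of block operators assembles into a single bounded operator that manifestly lies in $\cR_n$. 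Finally, fixing the orthonormal basis $\{b_{j,k}\colon j-k=d\}$ of $\cD_{d,\min\{n,n+d\}}$ identifies $\cB(\cD_{d,\min\{n,n+d\}})$ with $\Mat_{\min\{n,n+d\}}$, the dimension count $\min\{n,n+d\}$ being exactly the one recorded in~\eqref{eq:cFn_decomposition_into_diagonal_subspaces}.

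The conceptual crux is the converse implication in the second paragraph: passing from commuting with the representation to being block-diagonal, which rests on the eigen-relation together with the fact that distinct values of $d$ yield distinct characters of $\bT$, so commuting operators cannot mix different spectral subspaces. By comparison, the direct-sum bookkeeping of the last paragraph --- that the identification lands in the $\ell^\infty$-sum and is genuinely isometric and onto --- is routine but must be handled with care, so that $\cR_n$ is matched with the bounded sequences of matrices rather than with a larger or smaller algebra.
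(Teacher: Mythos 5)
Your proposal is correct and follows essentially the same route as the paper: the paper packages the argument into general statements about families of joint eigenspaces that ``diagonalize'' a self-adjoint set of operators (Propositions~\ref{prop:diagonalizing_family_of_subspaces} and~\ref{prop:diagonalizing_family_in_subspace}, applied via Lemma~\ref{lem:diagonal_subspaces_diagonalize_rotation_operators} and formula~\eqref{eq:truncated_diagonal_as_intersection}), but the substance --- the eigen-relation $R_\tau b_{p,q}=\tau^{q-p}b_{p,q}$, separation of distinct diagonals by characters, commutant equals block-diagonal operators, and the isometric identification with the $\ell^\infty$-direct sum of matrix algebras via the basis $(b_{d+k,k})_k$ --- is exactly what you wrote. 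The only difference is organizational: the paper first treats $L^2(\bC,\Gaussian)$ and then restricts to the invariant subspace $\cF_n$, whereas you work directly inside $\cF_n$.
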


\begin{thm}\label{thm:radial_true_polyanalytic_Fock}
Let $n\in\bN$.
Then $\cR_{(n)}$ consists of all operators
belonging to $\cB(\cF_{(n)})$
that are diagonal with respect to the orthonormal basis
$(b_{p,n-1})_{p=0}^\infty$.
Furthermore,
\[
\cR_{(n)}\cong\ell^\infty(\bNz).
\]
\end{thm}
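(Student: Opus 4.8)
The plan is to exploit the fact that $\cF_{(n)}$ carries the orthonormal basis $(b_{p,n-1})_{p=0}^\infty$, each element of which is a common eigenvector of the whole family $(R_{(n),\tau})_{\tau\in\bT}$, and that the associated characters of $\bT$ are pairwise distinct. First I would record the action of the rotations on this basis. Since $\cF_{(n)}$ is a rotation-invariant subspace of $\cF_n$, the operator $R_{(n),\tau}$ is the restriction of $R_{n,\tau}$, and the rule $f\mapsto f(\tau^{-1}z)$ sends a monomial $z^a\overline{z}^{\,b}$ to $\tau^{b-a}z^a\overline{z}^{\,b}$ (using $\overline{\tau^{-1}}=\tau$ for $\tau\in\bT$). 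Because each complex Hermite polynomial $b_{j,k}$ is, by its construction through orthogonalization inside a diagonal subspace, a combination of monomials with fixed difference $a-b=j-k$, this gives
\[
R_{(n),\tau}\,b_{p,n-1}=\tau^{\,n-1-p}\,b_{p,n-1}\qquad(p\in\bNz,\ \tau\in\bT).
\]
The exponents $n-1-p$ are pairwise distinct as $p$ ranges over $\bNz$, hence the characters $\tau\mapsto\tau^{\,n-1-p}$ are pairwise distinct.

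Next I would show that every radial operator is diagonal. Let $A\in\cR_{(n)}$ and expand $A\,b_{q,n-1}=\sum_{p}c_{p,q}\,b_{p,n-1}$ with $c_{p,q}=\langle A\,b_{q,n-1},b_{p,n-1}\rangle$ (the series converges in $\cF_{(n)}$ since $(b_{p,n-1})$ is an orthonormal basis). Applying the commutation relation $A\,R_{(n),\tau}=R_{(n),\tau}\,A$ to $b_{q,n-1}$ and using continuity of $R_{(n),\tau}$ to pass the rotation through the sum, I would compare the coefficient of $b_{p,n-1}$ on both sides to obtain
\[
c_{p,q}\bigl(\tau^{\,n-1-q}-\tau^{\,n-1-p}\bigr)=0\qquad(\tau\in\bT).
\]
For $p\ne q$ the two characters differ, so the bracket is nonzero for some $\tau$, forcing $c_{p,q}=0$. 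Thus $A$ is diagonal with respect to $(b_{p,n-1})_{p=0}^\infty$. The converse is immediate: if $A\,b_{p,n-1}=\lambda_p\,b_{p,n-1}$ with $(\lambda_p)_{p=0}^\infty\in\ell^\infty(\bNz)$, then $A$ and every $R_{(n),\tau}$ are simultaneously diagonal, so they commute and $A\in\cR_{(n)}$.

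Finally I would assemble the isomorphism. The map $\Phi\colon\cR_{(n)}\to\ell^\infty(\bNz)$ carrying a diagonal operator to its diagonal sequence $(\lambda_p)_{p=0}^\infty$ is linear and multiplicative, and satisfies $\Phi(A^*)=\overline{\Phi(A)}$; it is a bijection because every bounded sequence defines a bounded diagonal operator on $\cF_{(n)}$. Since the operator norm of a diagonal operator with respect to an orthonormal basis equals the supremum of the moduli of its diagonal entries, $\Phi$ is isometric, which yields $\cR_{(n)}\cong\ell^\infty(\bNz)$.

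The argument is short precisely because the eigenspaces of the representation $(R_{(n),\tau})_{\tau\in\bT}$ are all one-dimensional. The only point needing care is the spectral statement that an operator commuting with the entire family must preserve each eigenspace; rather than invoking a general averaging argument over the compact group $\bT$, I handle it directly through the character computation above, which is what drives the whole proof. No serious obstacle remains: this theorem is the commutative degeneration of Theorem~\ref{thm:radial_polyanalytic_Fock}, in which each truncated diagonal subspace meets $\cF_{(n)}$ in dimension one, so that the matrix factors $\Mat_{\min\{n,n+d\}}$ collapse to scalars and the direct sum of $\cB(\cdot)$ becomes $\ell^\infty(\bNz)$.
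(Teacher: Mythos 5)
Your proof is correct and follows essentially the same route as the paper: both arguments rest on the identity $R_{(n),\tau}b_{p,n-1}=\tau^{\,n-1-p}b_{p,n-1}$, the fact that these characters are pairwise distinct, and the resulting one-dimensionality of the common eigenspaces. The only difference is presentational: the paper funnels the commutant computation through its general Propositions~\ref{prop:diagonalizing_family_of_subspaces} and~\ref{prop:diagonalizing_family_in_subspace} applied to the diagonal subspaces $\cD_d\cap\cF_{(n)}=\bC b_{d+n-1,n-1}$, whereas you inline exactly that matrix-coefficient argument directly.
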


In particular, Theorems~\ref{thm:radial_polyanalytic_Fock}
and \ref{thm:radial_true_polyanalytic_Fock}
imply that the algebra $\cR_n$ is noncommutative for $n\ge2$,
whereas $\cR_{(n)}$ is commutative for every $n$ in $\bN$.

In Section~\ref{sec:basis} we recall the main properties
of the complex Hermite polynomials $b_{p,q}$.
In Section~\ref{sec:spaces} we give direct proofs
of the principal properties of the spaces $\cF_n$ and $\cF_{(n)}$.
Section~\ref{sec:unitary_representations}
contains some general remarks
about unitary representations in RKHS,
given by changes of variables.
Section~\ref{sec:radial} deals with radial operators,
describes the von Neumann algebra of radial operators
in $L^2(\bC,\gamma)$,
and proves Theorems~\ref{thm:radial_polyanalytic_Fock}
and \ref{thm:radial_true_polyanalytic_Fock}.
Finally, in Section~\ref{sec:Toeplitz}
we make some simple observations
about Toeplitz operators generated by bounded radial functions
and acting in the spaces $\cF_n$ and $\cF_{(n)}$.

Another natural method to prove
\eqref{eq:cFn_decomposition_into_diagonal_subspaces}
and Theorems~\ref{thm:radial_polyanalytic_Fock},
\ref{thm:radial_true_polyanalytic_Fock}
is to represent $L^2(\bC,\Gaussian)$ as a tensor product
$L^2(\bT,\dif\mu_T)\otimes L^2([0,+\infty),
\econstant^{-r^2}\,2r\,\dif{}r)$
and to apply the Fourier transform of the group $\bT$.
We prefer to work with the canonical basis
because this method seems more elementary.

Comparing our Theorem~\ref{thm:radial_polyanalytic_Fock}
with the main results of \cite{RamirezSanchez2015,LoaizaRamirez2017,SanchezGonzalezLopezArroyo2018},
we would like to point out three differences.
\begin{enumerate}
\item We study the von Neumann algebra $\cR_n$ of
\emph{all radial operators},
instead of C*-algebras generated by Toeplitz operators with radial symbols (such C*-algebras can be objects of study in a future).
\item The dual group of $\bT$ is the discrete group $\bZ$,
therefore matrix sequences appear instead of matrix functions.
\item In \cite{RamirezSanchez2015,LoaizaRamirez2017,SanchezGonzalezLopezArroyo2018},
all matrices have the same order $n$,
whereas in our Theorem~\ref{thm:radial_polyanalytic_Fock}
the matrices have orders $1,2,\ldots,n-1,n,n,\ldots$.
\end{enumerate}

\section{Complex Hermite polynomials}
\label{sec:basis}

Most results of Sections~\ref{sec:basis} and \ref{sec:spaces}
are well known to experts
\cite{Balk1991,Vasilevski2000,AbreuFeichtinger2014}.
Nevertheless, our proofs are more direct
than the ideas found in the bibliography.

Given a function $f\colon\bC\to\bC$,
continuously differentiable in the $\bR^2$-sense,
we define $A^\dagger f$ and $\overline{A}^\dagger f$ by
\begin{align*}
A^\dagger f
&=\left(\conjz - \frac{\partial}{\partial z}\right)f
=-\econstant^{z\,\conjz}
\frac{\partial}{\partial z}
\left(\econstant^{-z\,\conjz} f\right),
\\
\overline{A}^\dagger f
&=\left(z - \frac{\partial}{\partial\conjz}\right)f
=-\econstant^{z\,\conjz}
\frac{\partial}{\partial\conjz}
\left(\econstant^{-z\,\conjz} f\right).
\end{align*}
The operators $A^\dagger$ and $\overline{A}^\dagger$
are known as (nonnormalized) creation operators
with respect to $\conjz$ and $z$, respectively.
For every $p,q$ in $\bNz$, denote by $m_{p,q}$
the monomial function $m_{p,q}(z)\eqdef z^p\, \overline{z}^q$.
Following Shigekawa \cite[Section~7]{Shigekawa1987}
we define the \emph{normalized complex Hermite polynomials} as
\begin{equation}\label{eq:b_Shigekawa}
b_{p,q} \eqdef
\frac{1}{\sqrt{p!\,q!}}
(A^\dagger)^q 
(\overline{A}^\dagger)^p
m_{0,0}\qquad(p,q\in\bNz).
\end{equation}
Notice that \cite{Shigekawa1987} defines
complex Hermite polynomials without the
factor $\frac{1}{\sqrt{p!\,q!}}$.
These polynomials appear also in Balk~\cite[Section~6.3]{Balk1991}.
Let us show explicitly some of them:
\[
\begin{array}{ccc}
b_{0,0}(z) = 1, &
b_{0,1}(z) = \overline{z}, &
b_{0,2}(z) = \frac{1}{\sqrt{2}}\overline{z}^2,
\\[1ex]
b_{1,0}(z) = z, &
b_{1,1}(z) = |z|^2-1, &
b_{1,2}(z) = \frac{1}{\sqrt{2}}\,\overline{z}(|z|^2-2),
\\[1ex]
b_{2,0}(z) = \frac{1}{\sqrt{2}}\,z^2, &
b_{2,1}(z) = \frac{1}{\sqrt{2}}\,z (|z|^2-2), &
b_{2,2}(z) = \frac{1}{2}(|z|^2 - 4|z|^2 + 2).
\end{array}
\]
For every $p,\alpha$ in $\bNz$, we denote by $L_p^{(\alpha)}$
the associated Laguerre polynomial.
Recall the Rodrigues formula,
the explicit expression,
and the orthogonality relation for these polynomials:
\begin{align}
\label{eq:generalized_Laguerre_Rodriguez}
L_n^{(\al)}(x)
&=\frac{x^{-\al}\econstant^x}{n!}
\frac{\dif^n}{\dif{}x^n}\bigl(\econstant^{-x}\,x^{n+\al}\bigr),
\\
\label{eq:generalized_Laguerre_explicit}
L_n^{(\alpha)}(x)
&=\sum_{k=0}^n (-1)^k \binom{n+\al}{n-k}\frac{x^k}{k!},
\\
\label{eq:Laguerre_orthogonality}
\int_0^{+\infty} L_n^{(\alpha)}(x) & L_m^{(\alpha)}(x)
\,x^\alpha \econstant^{-x}\,\dif{}x
=\frac{(n+\alpha)!}{n!}\,\delta_{m,n}.
\end{align}

\begin{lem}\label{lem:derivative_from_Rodriguez_xy}
Let $n,\alpha\in\bN$. Then
\begin{equation}\label{eq:derivative_from_Rodriguez_xy}
\econstant^{xy}\frac{\partial^n}{\partial{}x^n}
\bigl(\econstant^{-xy}\,x^{n+\al}\bigr)
=n!\,x^\al L_n^{(\al)}(xy).
\end{equation}
\end{lem}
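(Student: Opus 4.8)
The plan is to reduce this two-variable statement to the one-variable Rodrigues formula \eqref{eq:generalized_Laguerre_Rodriguez} by a change of variable. First I would fix $y\ne0$ as a parameter and introduce $t\eqdef xy$. The key observation is that, after pulling out a power of $y$, the function being differentiated depends on $x$ only through $t$: since $x=t/y$, we have $\econstant^{-xy}\,x^{n+\al}=y^{-(n+\al)}\,\econstant^{-t}\,t^{n+\al}$.

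Next, because $t=xy$ is linear in $x$ with $\partial t/\partial x=y$, the chain rule gives $\partial^n/\partial x^n=y^n\,\dif^n/\dif t^n$ when applied to any function of $t$ alone. Applying this to the rewritten expression yields
\[
\frac{\partial^n}{\partial x^n}\bigl(\econstant^{-xy}\,x^{n+\al}\bigr)
=y^{n}\,y^{-(n+\al)}\,\frac{\dif^n}{\dif t^n}\bigl(\econstant^{-t}\,t^{n+\al}\bigr)
=y^{-\al}\,\frac{\dif^n}{\dif t^n}\bigl(\econstant^{-t}\,t^{n+\al}\bigr).
\]
Now the one-variable Rodrigues formula \eqref{eq:generalized_Laguerre_Rodriguez}, rearranged, says precisely that $\dif^n/\dif t^n\bigl(\econstant^{-t}\,t^{n+\al}\bigr)=n!\,t^\al\,\econstant^{-t}\,L_n^{(\al)}(t)$. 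Substituting this, replacing $t$ by $xy$, and using $y^{-\al}(xy)^\al=x^\al$, I obtain
\[
\frac{\partial^n}{\partial x^n}\bigl(\econstant^{-xy}\,x^{n+\al}\bigr)
=n!\,x^\al\,\econstant^{-xy}\,L_n^{(\al)}(xy),
\]
and multiplying both sides by $\econstant^{xy}$ gives \eqref{eq:derivative_from_Rodriguez_xy}.

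Since both sides of \eqref{eq:derivative_from_Rodriguez_xy} are polynomials in $x$ and $y$ (for the left-hand side this is seen by expanding with the Leibniz rule and cancelling the exponentials), the identity established for $y\ne0$ extends to all $y$ by continuity, so the restriction $y\ne0$ used in the substitution is harmless. I do not expect any genuine obstacle here: the only thing to watch is the bookkeeping of the powers of $y$ — the factor $y^{n}$ produced by the chain rule must be combined correctly with the factor $y^{-(n+\al)}$ coming from rewriting $x^{n+\al}=(t/y)^{n+\al}$, and the surviving $y^{-\al}$ must cancel against the $y^\al$ hidden inside $(xy)^\al$ to leave a clean $x^\al$.
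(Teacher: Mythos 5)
Your proposal is correct and is essentially the paper's own argument: both apply the one-variable Rodrigues formula at $t=xy$ together with the chain rule (producing the factor $y^n$) and then cancel the resulting power $y^{n+\al}$, which is exactly your bookkeeping of $y^{n}\,y^{-(n+\al)}\,(xy)^{\al}=x^{\al}$. Your extra remark on extending from $y\ne0$ to all $y$ by polynomiality is a harmless refinement the paper leaves implicit.
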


\begin{proof}
Apply Rodrigues formula~\eqref{eq:generalized_Laguerre_Rodriguez}
and the chain rule:
\[
\frac{\partial^n}{\partial{}x^n}
\bigl(\econstant^{-xy}(xy)^{n+\al}\bigr)
=n!\,\econstant^{-xy}(xy)^\al L_n^{(\al)}(xy)\,y^n.
\]
Canceling the factor $y^{n+\al}$ in both sides yields~\eqref{eq:derivative_from_Rodriguez_xy}.
\end{proof}

\begin{prop}
For every $p,q$ in $\bNz$,
\begin{equation}\label{eq:b_Laguerre}
b_{p,q}(z)
=
\begin{cases}
\sqrt{\frac{q!}{p!}} (-1)^{q} \,z^{p-q} L_q^{(p-q)}(|z|^2), & \text{if}\ p\ge q;\\[1ex]
\sqrt{\frac{p!}{q!}} (-1)^{p} \,\overline{z}^{q-p} L_p^{(q-p)}(|z|^2), & \text{if}\ p\le q.
\end{cases}
\end{equation}
In other words,
\begin{equation}\label{eq:b_via_m}
b_{p,q}
=\sqrt{\frac{\min\{p,q\}!}{\max\{p,q\}!}}
\sum_{s=0}^{\min\{p,q\}}
\binom{\max\{p,q\}}{s}\,\frac{(-1)^s}{(\min\{p,q\}-s)!}
m_{p-s,q-s}.
\end{equation}
\end{prop}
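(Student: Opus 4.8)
The plan is to peel off the two creation operators one at a time, using their ``Gaussian-conjugated'' form. From the definitions one checks by a one-line induction that
\[
(A^\dagger)^q f = (-1)^q \econstant^{z\conjz}\frac{\partial^q}{\partial z^q}\bigl(\econstant^{-z\conjz}f\bigr),
\qquad
(\overline{A}^\dagger)^p f = (-1)^p \econstant^{z\conjz}\frac{\partial^p}{\partial\conjz^p}\bigl(\econstant^{-z\conjz}f\bigr),
\]
because each successive application cancels the exponential factor produced by the previous one. Taking $f=m_{0,0}=1$ and using $\partial_{\conjz}\econstant^{-z\conjz}=-z\,\econstant^{-z\conjz}$ (respectively $\partial_z\econstant^{-z\conjz}=-\conjz\,\econstant^{-z\conjz}$) gives the two base identities $(\overline{A}^\dagger)^p\cdot 1 = z^p$ and $(A^\dagger)^q\cdot 1 = \conjz^{\,q}$.

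Next I would record that $A^\dagger$ and $\overline{A}^\dagger$ commute: a direct first-order commutator computation, using $[\partial_z,z]=1$, $[\partial_{\conjz},\conjz]=1$ and the vanishing of the remaining brackets, gives $[A^\dagger,\overline{A}^\dagger]=0$. This allows the two blocks in \eqref{eq:b_Shigekawa} to be reordered so that the creation operator carrying the larger exponent acts on the constant first. For $p\ge q$ I apply $(\overline{A}^\dagger)^p$ first, obtaining $z^p$, and then compute
\[
(A^\dagger)^q z^p
= (-1)^q \econstant^{z\conjz}\frac{\partial^q}{\partial z^q}\bigl(\econstant^{-z\conjz}z^p\bigr).
\]
Writing $p=q+(p-q)$ and invoking Lemma~\ref{lem:derivative_from_Rodriguez_xy} with $x=z$, $y=\conjz$, $n=q$, $\al=p-q$, the derivative equals $q!\,z^{p-q}L_q^{(p-q)}(|z|^2)\,\econstant^{-z\conjz}$; the Gaussian factors cancel and the normalization $\tfrac{1}{\sqrt{p!\,q!}}$ collapses to $\sqrt{q!/p!}$, yielding the first line of \eqref{eq:b_Laguerre}.

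For $p\le q$ I would use commutativity to write $b_{p,q}=\tfrac{1}{\sqrt{p!\,q!}}(\overline{A}^\dagger)^p(A^\dagger)^q\cdot 1 = \tfrac{1}{\sqrt{p!\,q!}}(\overline{A}^\dagger)^p\conjz^{\,q}$ and repeat the computation with the roles of $z$ and $\conjz$ interchanged, applying the lemma with $x=\conjz$, $n=p$, $\al=q-p$; this produces the second line of \eqref{eq:b_Laguerre}. (Equivalently, the two lines are exchanged by complex conjugation, and $\overline{b_{p,q}}=b_{q,p}$ follows from commutativity together with the rules $\overline{A^\dagger f}=\overline{A}^\dagger\overline{f}$ and $\overline{\overline{A}^\dagger f}=A^\dagger\overline{f}$, so one case implies the other.) Finally, to obtain \eqref{eq:b_via_m} I substitute the explicit Laguerre expansion \eqref{eq:generalized_Laguerre_explicit} into \eqref{eq:b_Laguerre} and reindex the sum by $s=\min\{p,q\}-k$: the factor $z^{p-q}|z|^{2k}$ (respectively $\conjz^{\,q-p}|z|^{2k}$) becomes $m_{p-s,q-s}$, the signs combine to $(-1)^s$, and the remaining binomial and factorial factors assemble into the stated symmetric expression.

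The only delicate point is the index bookkeeping in the Laguerre step: matching $z^p$ against the pattern $x^{n+\al}$ of Lemma~\ref{lem:derivative_from_Rodriguez_xy} forces the split $p=q+(p-q)$ and hence the hypothesis $p\ge q$, so the complementary range genuinely needs either the conjugation symmetry or the reordering afforded by commutativity. A minor gap is the diagonal case $p=q$, where $\al=0$ falls outside the hypotheses of the lemma; the identical chain-rule argument nevertheless covers it, so it suffices to observe that Lemma~\ref{lem:derivative_from_Rodriguez_xy} remains valid for $\al\in\bNz$.
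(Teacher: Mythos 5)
Your argument is correct and follows essentially the same route as the paper: iterate the Gaussian-conjugated form of the creation operators, invoke Lemma~\ref{lem:derivative_from_Rodriguez_xy} for $p\ge q$, use the commutativity of $A^\dagger$ and $\overline{A}^\dagger$ to reduce the case $p\le q$ to the symmetric computation, and expand the Laguerre polynomial via \eqref{eq:generalized_Laguerre_explicit} to obtain \eqref{eq:b_via_m}. Your remark that Lemma~\ref{lem:derivative_from_Rodriguez_xy} should be read with $n,\al\in\bNz$ so as to cover the cases $p=q$ and $q=0$ is a fair observation, and it applies equally to the paper's own proof.
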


\begin{proof}
Let $p,q\in\bNz$, $p\ge q$.
Notice that $\frac{\partial}{\partial z}|z|^2
=\frac{\partial}{\partial z}(z\,\conjz)=\conjz$.
By~\eqref{eq:b_Shigekawa} and \eqref{eq:derivative_from_Rodriguez_xy},
\begin{align*}
b_{p,q}(z)
&=\frac{(-1)^{p+q}}{\sqrt{p!\,q!}}
\econstant^{z\conjz}
\frac{\partial^q}{\partial{}z^q}
\frac{\partial^p}{\partial{}\conjz^p}\,
\econstant^{-z\conjz}
\\
&=
\frac{(-1)^q}{\sqrt{p!\,q!}}
\econstant^{z\conjz}
\frac{\partial^q}{\partial z^q}
\bigl(\econstant^{-z\conjz} z^p\bigr)
=\sqrt{\frac{q!}{p!}}\,(-1)^q
z^{p-q} L_q^{(p-q)}(z\,\conjz).
\end{align*}
In the case
when
$p\le q$,
we first notice that the operators
$A^\dagger$ and $\overline{A}^\dagger$ commute
on the space of polynomial functions.
Reasoning as above, but swapping the roles of $z$ and $\conjz$, we arrive at the second case of~\eqref{eq:b_Laguerre}.
Finally, with the help of \eqref{eq:generalized_Laguerre_explicit},
we pass from \eqref{eq:b_Laguerre} to \eqref{eq:b_via_m}.
Formula \eqref{eq:b_via_m} can also be derived directly
from~\eqref{eq:b_Shigekawa},
by applying mathematical
induction
and working with binomial coefficients.
\end{proof}

Denote by $\ell_m^{(\alpha)}$ the \emph{normalized Laguerre function}:
\begin{equation}\label{eq:laguerre_function}
\ell_m^{(\alpha)}(t)\eqdef
\sqrt{\frac{m!}{(m+\alpha)!}}\,
t^{\alpha/2} \econstant^{-t/2} L_m^{(\alpha)}(t)
\qquad(m,\alpha\in\bNz).
\end{equation}

\begin{cor}\label{cor:b_via_Laguerre_function}
For every $p,q$ in $\bNz$,
\begin{equation}\label{eq:b_via_Laguerre_function}
b_{p,q}(r\tau)
=(-1)^{\min\{p,q\}}\tau^{p-q}
\econstant^{r^2/2}\ell_{\min\{p,q\}}^{(|p-q|)}(r^2)
\qquad(r\ge0,\ \tau\in\bT).
\end{equation}
\end{cor}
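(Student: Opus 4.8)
The plan is to derive the formula directly from the closed expression \eqref{eq:b_Laguerre} by substituting $z=r\tau$ and then absorbing the surviving factors into the definition \eqref{eq:laguerre_function} of the normalized Laguerre function. Throughout I would write $m\eqdef\min\{p,q\}$ and $\al\eqdef|p-q|$, and I would split the verification into the two cases $p\ge q$ and $p\le q$, exactly mirroring the two branches of \eqref{eq:b_Laguerre}.

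First I would treat the case $p\ge q$, so that $m=q$ and $\al=p-q$. Since $|z|^2=r^2$ and $z^{p-q}=r^{p-q}\tau^{p-q}$, the first branch of \eqref{eq:b_Laguerre} gives
\[
b_{p,q}(r\tau)=\sqrt{\tfrac{q!}{p!}}\,(-1)^q\,r^{p-q}\tau^{p-q}L_q^{(p-q)}(r^2).
\]
On the other hand, evaluating \eqref{eq:laguerre_function} at $t=r^2$ with $m=q$ and $\al=p-q$ yields $\ell_q^{(p-q)}(r^2)=\sqrt{\tfrac{q!}{p!}}\,r^{p-q}\econstant^{-r^2/2}L_q^{(p-q)}(r^2)$; multiplying this by $(-1)^q\tau^{p-q}\econstant^{r^2/2}$ reproduces the previous display, which is precisely \eqref{eq:b_via_Laguerre_function} in this case.

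Next I would handle $p\le q$, where $m=p$ and $\al=q-p$. The only point needing care is the exponent of $\tau$: the second branch of \eqref{eq:b_Laguerre} carries the antiholomorphic monomial $\overline{z}^{\,q-p}$, and since $|\tau|=1$ we have $\overline{z}=r\overline{\tau}=r\tau^{-1}$, so $\overline{z}^{\,q-p}=r^{q-p}\tau^{-(q-p)}=r^{q-p}\tau^{p-q}$. With this substitution the same arithmetic as before, now using $\ell_p^{(q-p)}(r^2)=\sqrt{\tfrac{p!}{q!}}\,r^{q-p}\econstant^{-r^2/2}L_p^{(q-p)}(r^2)$, delivers the claim with $(-1)^{\min\{p,q\}}=(-1)^p$ and the uniform factor $\tau^{p-q}$.

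The computation is entirely routine, so there is no genuine obstacle. The single subtlety worth stating explicitly is the identity $\overline{\tau}=\tau^{-1}$ on $\bT$: it is this that converts $\overline{z}^{\,q-p}$ into the power $\tau^{p-q}$ appearing uniformly in \eqref{eq:b_via_Laguerre_function}, and it is what makes the two separate branches of \eqref{eq:b_Laguerre} collapse into the single stated formula, with exponent $p-q$ on $\tau$ and sign $(-1)^{\min\{p,q\}}$ valid simultaneously in both cases.
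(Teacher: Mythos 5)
Your computation is correct and is exactly the argument the paper intends: the corollary is stated without proof because it follows from \eqref{eq:b_Laguerre} and the definition \eqref{eq:laguerre_function} by the substitution $z=r\tau$ that you carry out, including the observation $\overline{\tau}=\tau^{-1}$ that unifies the two branches. Nothing is missing.
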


It is convenient to treat the family
$(m_{p,q})_{p,q\in\bNz}$ as an infinite table,
and to think in terms of its columns or diagonals
(parallel to the main diagonal).
Given $d$ in $\bZ$ and $n$ in $\bNz$,
let $\cD_{d,n}$ be the subspace of $L^2(\bC,\gamma)$
generated by the first $n$ monomials in the diagonal with index $d$:
\[
\cD_{d,n} \eqdef \lin\{m_{p,q}\colon\ p,q\in\bNz,\ \min\{p,q\}<n,\ p-q=d\}.
\]

\begin{prop}\label{prop:basis_in_L2}
The family $(b_{p,q})_{p,q\in\bNz}$ is
an orthonormal basis of $L^2(\bC,\gamma)$.
This family can be obtained from
$(m_{p,q})_{p,q=0}^\infty$
by applying the Gram--Schmidt orthogonalization.
\end{prop}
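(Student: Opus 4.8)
The plan is to prove two claims: that $(b_{p,q})$ is an orthonormal basis, and that it arises from Gram--Schmidt applied to $(m_{p,q})$ in the appropriate order. I would attack both simultaneously by exploiting the triangular relationship between the $b$'s and the $m$'s.

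First I would establish orthonormality. The cleanest route uses Corollary~\ref{cor:b_via_Laguerre_function}: writing $z=r\tau$ with $r\ge0$ and $\tau\in\bT$, and using $\dif\gamma(z)=\frac{1}{\pi}\econstant^{-r^2}\,r\,\dif{}r\,\dif\theta$ in polar coordinates, the inner product $\langle b_{p,q},b_{p',q'}\rangle$ factors into an angular integral $\int_0^{2\pi}\tau^{(p-q)-(p'-q')}\,\dif\theta$ and a radial integral. The angular factor vanishes unless $p-q=p'-q'$, which forces the same diagonal index $d$. On a fixed diagonal the radial integral reduces, after the substitution $t=r^2$, to the Laguerre orthogonality relation~\eqref{eq:Laguerre_orthogonality}; the normalization constant in \eqref{eq:laguerre_function} is chosen precisely so that $\langle b_{p,q},b_{p,q}\rangle=1$. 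So orthonormality is essentially a direct computation, and I do not expect it to be the hard part.

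Next I would address completeness and the Gram--Schmidt claim together. From \eqref{eq:b_via_m} each $b_{p,q}$ is a linear combination of the monomials $m_{p-s,q-s}$ for $0\le s\le\min\{p,q\}$, all lying on the same diagonal $p-q=d$, with the leading term ($s=0$) equal to $m_{p,q}$ times a nonzero coefficient. Thus on each diagonal the change of basis from $(m_{p,q})$ to $(b_{p,q})$ is triangular with nonzero diagonal entries, so $\lin\{b_{p,q}\colon p-q=d,\ \min\{p,q\}<n\}=\cD_{d,n}$ for every $n$, and hence $\clos\lin\{b_{p,q}\}=\clos\lin\{m_{p,q}\}$. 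This simultaneously identifies the $b$'s as the Gram--Schmidt orthogonalization of the $m$'s (ordered so that within each diagonal one proceeds outward from the vertex, and the diagonals are interleaved compatibly), since orthonormalizing a triangular system with positive leading coefficients reproduces exactly such a combination.

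The main obstacle is completeness: showing that the closed span of the monomials $(m_{p,q})_{p,q\in\bNz}$ is all of $L^2(\bC,\gamma)$. This is the one point that does not follow from the triangular algebra alone. I would argue it by density: the polynomials in $z$ and $\conjz$ are dense in $L^2(\bC,\gamma)$, which can be seen by noting that if $f\in L^2(\bC,\gamma)$ is orthogonal to every $m_{p,q}$, then all mixed moments of $f\,\dif\gamma$ vanish, and the Gaussian weight guarantees enough decay that the Fourier transform of $f\,\econstant^{-|z|^2}$ is real-analytic with all derivatives vanishing at the origin, forcing $f=0$ almost everywhere. Once density of the monomials is in hand, the triangular relation transfers it to the $b_{p,q}$, completing the proof that they form an orthonormal basis.
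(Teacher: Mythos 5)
Your proposal is correct and follows essentially the same route as the paper: orthonormality via polar coordinates and the Laguerre orthogonality relation, the Gram--Schmidt claim via the triangular change of basis within each diagonal coming from \eqref{eq:b_via_m}, and completeness via density of the polynomials in $z$ and $\conjz$, proved by showing that the Fourier transform of $f\,\econstant^{-|z|^2}$ vanishes identically when all mixed moments vanish. The only cosmetic difference is that the paper expands the exponential in the Fourier integral directly and invokes injectivity of the Fourier transform, whereas you phrase the same fact as real-analyticity with all derivatives vanishing at the origin.
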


\begin{proof}
1. The orthonormality is easy to verify
by passing to polar coordinates
and using \eqref{eq:b_Laguerre}
with the orthogonality relation
\eqref{eq:Laguerre_orthogonality}.

2. The formula~\eqref{eq:b_via_m} tells us
that the functions $b_{p,q}$ are linear combinations
of $m_{p-s,q-s}$ with $0\le s\le\min\{p,q\}$.
Inverting these formulas,
$m_{p,q}$ results a linear combination
of $b_{p-s,q-s}$ with $0\le s\le\min\{p,q\}$.
So, for every $d$ in $\bZ$ and every $n$ in $\bNz$,
\begin{equation}\label{eq:truncated_diagonal_through_b}
\cD_{d,n}=\lin\{b_{p,q}\colon\ p,q\in\bNz,\ \min\{p,q\}<n,\ p-q=d\}.
\end{equation}
Jointly with the orthonormality of $(b_{p,q})_{p,q=0}^\infty$,
this means that the family $(b_{p,q})_{p,q=0}^\infty$
is obtained from $(m_{p,q})_{p,q=0}^\infty$
by applying the orthogonalization in each diagonal.

3. Due to 2, it is sufficient to prove that
the polynomials in $z$ and $\overline{z}$ form a dense subset of $L^2(\bC,\gamma)$.
Notice that the set of polynomial functions in $z$ and $\overline{z}$
coincides with the set of polynomial functions in $\Re(z)$ and $\Im(z)$.
Suppose that $f\in L^2(\bC,\gamma)$
and $f$ is orthogonal to the polynomials $\Re(z)^j \Im(z)^k$ for all $j,k$ in $\bNz$.
Denote by $g$ the function $g(x,y)=f(x+\imagunit y)\econstant^{-x^2-y^2}$
and consider its Fourier transform:
\begin{align*}
\widehat{g}(u,v)
&=\int_{\bR^2} \econstant^{-2\pi\imagunit(xu+yv)}\,
f(x+\imagunit y)\econstant^{-x^2-y^2}\,\dif{}x\,\dif{}y
\\[0.5ex]
&=\sum_{j=0}^\infty\sum_{k=0}^\infty
\frac{(-2\pi\imagunit u)^j (-2\pi\imagunit v)^k}{j!\,k!}
\int_{\bR^2} x^j y^k f(x+\imagunit y) \econstant^{-x^2-y^2}\,\dif{}x\,\dif{}y
=0.
\end{align*}
By the injective property of the Fourier transform,
we conclude that $g$ vanishes a.e.
As a consequence, $f$ also vanishes a.e.
\end{proof}

\begin{remark}\label{rem:products_b_m}
The second part of the proof of Proposition~\ref{prop:basis_in_L2} implies that
for every $d$ in $\bZ$, every $q\ge\max\{0,-d\}$ every $k$ in $\bZ$
with $\max\{0,-d\}\le k\le q$,
\begin{equation}\label{eq:product_b_m}
\langle m_{d+k,k},b_{d+q,q}\rangle
=
\begin{cases}
\sqrt{q!\,(d+q)!}, & k=q; \\[0.5ex]
0, & k<q.
\end{cases}
\end{equation}
\end{remark}

Formula~\eqref{eq:truncated_diagonal_through_b}
means that the first $n$ elements in the diagonal $d$
of the table $(b_{p,q})_{p,q\in\bNz}$
generate the same subspace
as the first $n$ elements in the diagonal $d$
of the table $(m_{p,q})_{p,q\in\bNz}$.
For example,
\begin{align*}
\cD_{-1,3}
&=\lin\{m_{0,1},m_{1,2},m_{2,3}\}=\lin\{b_{0,1},b_{1,2},b_{2,3}\},\\
\cD_{2,2}
&=\lin\{m_{2,0},m_{3,1}\}=\lin\{b_{2,0},b_{3,1}\}.
\end{align*}
In the following tables we show generators of $\cD_{2,2}$ (green) and $\cD_{-1,3}$ (blue).
\[
\begin{array}{cccccc}
m_{0,0} & \cellcolor{lightblue} m_{0,1} & m_{0,2} & m_{0,3} & m_{0,4} & \ddots \\
m_{1,0} & m_{1,1} & \cellcolor{lightblue} m_{1,2} & m_{1,3} & m_{1,4} & \ddots \\
\cellcolor{green} m_{2,0} & m_{2,1} & m_{2,2} & \cellcolor{lightblue} m_{2,3} & m_{2,4} & \ddots \\
m_{3,0} & \cellcolor{green} m_{3,1} & m_{3,2} & m_{3,3} & m_{3,4} & \ddots \\
m_{4,0} & m_{4,1} & m_{4,2} & m_{4,3} & m_{4,4} & \ddots \\
\ddots & \ddots & \ddots & \ddots & \ddots & \ddots
\end{array}
\qquad\quad
\begin{array}{cccccc}
b_{0,0} & \cellcolor{lightblue} b_{0,1} & b_{0,2} & b_{0,3} & b_{0,4} & \ddots \\
b_{1,0} & b_{1,1} & \cellcolor{lightblue} b_{1,2} & b_{1,3} & b_{1,4} & \ddots \\
\cellcolor{green} b_{2,0} & b_{2,1} & b_{2,2} & \cellcolor{lightblue} b_{2,3} & b_{2,4} & \ddots \\
b_{3,0} & \cellcolor{green} b_{3,1} & b_{3,2} & b_{3,3} & b_{3,4} & \ddots \\
b_{4,0} & b_{4,1} & b_{4,2} & b_{4,3} & b_{4,4} & \ddots \\
\ddots & \ddots & \ddots & \ddots & \ddots & \ddots
\end{array}
\]
Given $d$ in $\bZ$, we denote by $\cD_d$
the closure of the subspace of $L^2(\bC,\gamma)$
generated by the monomials $m_{p,q}$, where $p-q=d$:
\[
\cD_d \eqdef \clos\bigl(\lin\{m_{p,q}\colon\ p,q\in\bNz,\ p-q=d\}\bigr).
\]
Proposition~\ref{prop:basis_in_L2}
implies the following properties
of the ``diagonal subspaces'' $\cD_d$, $d\in\bZ$.

\begin{cor}\label{cor:basis_of_D}
The sequence $(b_{q+d,q})_{q=\max\{0,-d\}}^\infty$
is an orthonormal basis of $\cD_d$.
\end{cor}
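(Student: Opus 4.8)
The plan is to deduce everything from Proposition~\ref{prop:basis_in_L2} together with the identity~\eqref{eq:truncated_diagonal_through_b}, so that essentially no new computation is required. The idea is that the sequence in the statement is nothing but the subfamily of the global orthonormal basis $(b_{p,q})_{p,q\in\bNz}$ lying on the diagonal $p-q=d$, and that this subfamily has the same closed span as the corresponding monomials.

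First I would check that the listed polynomials are exactly the members of $(b_{p,q})_{p,q\in\bNz}$ with $p-q=d$. Writing $p=q+d$, the requirement $p,q\in\bNz$ forces $q\ge\max\{0,-d\}$, which is precisely the index range appearing in the corollary. Since any subfamily of an orthonormal family is orthonormal, Proposition~\ref{prop:basis_in_L2} immediately yields the orthonormality of $(b_{q+d,q})_{q\ge\max\{0,-d\}}$.

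Next I would verify that the closed linear span of this subfamily equals $\cD_d$. By~\eqref{eq:truncated_diagonal_through_b}, for each $n$ the truncated diagonal satisfies $\cD_{d,n}=\lin\{b_{q+d,q}\colon q\ge\max\{0,-d\},\ \min\{q+d,q\}<n\}$, that is, it is the span of the first $n$ diagonal elements of the $b$-table. Letting $n\to\infty$ and taking the union, the span of the monomials $m_{p,q}$ with $p-q=d$ coincides with $\lin\{b_{q+d,q}\colon q\ge\max\{0,-d\}\}$; passing to closures then gives $\cD_d=\clos\bigl(\lin\{b_{q+d,q}\colon q\ge\max\{0,-d\}\}\bigr)$.

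Finally, an orthonormal sequence whose closed linear span is a prescribed closed subspace is by definition an orthonormal basis of that subspace, and applying this to $\cD_d$ finishes the proof. The only step requiring any care is the index bookkeeping that identifies the condition $\min\{q+d,q\}<n$ with ``the first $n$ elements on diagonal $d$''; this is a routine translation of the inequalities $p,q\ge 0$, $p-q=d$, rather than a genuine obstacle.
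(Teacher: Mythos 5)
Your proposal is correct and follows exactly the route the paper intends: the corollary is stated as an immediate consequence of Proposition~\ref{prop:basis_in_L2}, via the orthonormality of the subfamily on the diagonal $p-q=d$ and the span identity~\eqref{eq:truncated_diagonal_through_b}, with the closure passage you describe. The index bookkeeping is also handled correctly.
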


\begin{cor}\label{cor:polar_description_of_D}
The space $\cD_d$ consists of all functions of the form
\begin{equation}\label{eq:polar_description_of_D}
f(r\tau)=\tau^{d} h(r^2)\quad
(r\ge0,\ \tau\in\bT),\quad
\text{where}\quad
h\in L^2([0,+\infty),\econstant^{-x}\,\dif{}x).
\end{equation}
Moreover, $\|f\|=\|h\|_{L^2([0,+\infty),\econstant^{-x}\,\dif{}x)}$.
\end{cor}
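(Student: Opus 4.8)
The plan is to realize $\cD_d$ as the image of an explicit isometry out of $L^2([0,+\infty),\econstant^{-x}\,\dif{}x)$ and to read off both the description \eqref{eq:polar_description_of_D} and the norm identity from that isometry. Define $\Phi\colon L^2([0,+\infty),\econstant^{-x}\,\dif{}x)\to L^2(\bC,\Gaussian)$ by $(\Phi h)(r\tau)\eqdef\tau^d h(r^2)$ for $r\ge0$ and $\tau\in\bT$ (this is well defined off the null set $\{0\}$). Passing to polar coordinates and substituting $x=r^2$ gives $\|\Phi h\|^2=\int_0^{\infty}|h(x)|^2\econstant^{-x}\,\dif{}x$, so $\Phi$ is norm-preserving. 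In particular every function of the form \eqref{eq:polar_description_of_D} lies in $L^2(\bC,\Gaussian)$ and already satisfies the claimed equality $\|f\|=\|h\|_{L^2([0,+\infty),\econstant^{-x}\,\dif{}x)}$; it remains to match $\operatorname{ran}\Phi$ with $\cD_d$.

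First I would show $\cD_d\subseteq\operatorname{ran}\Phi$. By Corollary~\ref{cor:b_via_Laguerre_function}, for each $q\ge\max\{0,-d\}$ the basis vector equals $b_{q+d,q}=\Phi h_q$, where $h_q(x)\eqdef(-1)^{\min\{q+d,q\}}\econstant^{x/2}\ell^{(|d|)}_{\min\{q+d,q\}}(x)$; the normalization \eqref{eq:laguerre_function} together with the orthogonality \eqref{eq:Laguerre_orthogonality} shows that each $h_q$ indeed lies in $L^2([0,+\infty),\econstant^{-x}\,\dif{}x)$. Since $\Phi$ is an isometry (hence has closed range) and, by Corollary~\ref{cor:basis_of_D}, the family $(b_{q+d,q})_q$ is an orthonormal basis of $\cD_d$, the closed span of the $b_{q+d,q}$ — namely $\cD_d$ — is contained in $\operatorname{ran}\Phi$. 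Consequently every $f\in\cD_d$ has the form $\Phi h$, i.e. the form \eqref{eq:polar_description_of_D}, with $\|f\|=\|h\|$.

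The remaining and main point is the reverse inclusion: that \emph{every} function of the form \eqref{eq:polar_description_of_D} already belongs to $\cD_d$, i.e. $\operatorname{ran}\Phi=\cD_d$ rather than merely $\supseteq$. This is equivalent to the completeness in $L^2([0,+\infty),\econstant^{-x}\,\dif{}x)$ of the orthonormal system $(h_q)_q$, equivalently to the classical completeness of the Laguerre functions $(\ell^{(|d|)}_m)_{m\ge0}$ in $L^2([0,+\infty),\dif{}t)$, and I expect this to be the only real obstacle. I would resolve it without invoking that classical fact, exploiting instead what is already proved. Let $E_d\eqdef\{F\in L^2(\bC,\Gaussian)\colon\ F(\tau z)=\tau^d F(z)\ \text{for all}\ \tau\in\bT\}$ be the $d$-th isotypic component of the rotation representation; the angular Fourier expansion (equivalently the factorization $L^2(\bC,\Gaussian)\cong L^2(\bT)\otimes L^2([0,+\infty),\econstant^{-r^2}2r\,\dif{}r)$ mentioned in the introduction) yields the orthogonal decomposition $L^2(\bC,\Gaussian)=\bigoplus_{d\in\bZ}E_d$, and a direct check identifies $E_d=\operatorname{ran}\Phi$. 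The explicit formula of Corollary~\ref{cor:b_via_Laguerre_function} shows $b_{q+d,q}\in E_d$, whence $\cD_d\subseteq E_d$. Since also $L^2(\bC,\Gaussian)=\bigoplus_{d\in\bZ}\cD_d$ by Proposition~\ref{prop:basis_in_L2} (grouping the basis $(b_{p,q})$ by the value $p-q$), two orthogonal decompositions of the same space with $\cD_d\subseteq E_d$ for every $d$ force $\cD_d=E_d=\operatorname{ran}\Phi$. This gives both inclusions and, together with the norm computation of the first paragraph, completes the proof.
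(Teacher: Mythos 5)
Your proof is correct and follows essentially the route the paper intends: the statement is presented there as a consequence of Proposition~\ref{prop:basis_in_L2}, i.e.\ of the completeness of the orthonormal system $(b_{p,q})$ together with the explicit polar form \eqref{eq:b_via_Laguerre_function} of the $b_{q+d,q}$, which is exactly what your isometry $\Phi$ and the comparison of the two orthogonal decompositions $\bigoplus_d\cD_d=\bigoplus_d E_d$ encode. In particular, you correctly identify and handle the one nontrivial point (the reverse inclusion, equivalently the completeness of the Laguerre system) by leaning on the already-proved basis property rather than on the classical completeness theorem, which is the same trade-off the paper makes.
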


\begin{cor}\label{cor:decomposition_L2_into_diagonals}
The space $L^2(\bC,\Gaussian)$
is the orthogonal sum of the subspaces $\cD_d$:
\begin{equation}\label{eq:decomposition_L2_into_diagonal_subspaces}
L^2(\bC,\Gaussian)=\bigoplus_{d\in\bZ}\cD_d.
\end{equation}
\end{cor}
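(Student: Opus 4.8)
The plan is to leverage the two facts already in hand: that $(b_{p,q})_{p,q\in\bNz}$ is an orthonormal basis of $L^2(\bC,\Gaussian)$ (Proposition~\ref{prop:basis_in_L2}), and that for each fixed $d$ the subfamily of those $b_{p,q}$ with $p-q=d$ is an orthonormal basis of $\cD_d$ (Corollary~\ref{cor:basis_of_D}). Granting these, the statement is a purely combinatorial reorganization of a single orthonormal basis, and no new analytic input is needed.

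First I would observe that the index set $\{(p,q)\colon p,q\in\bNz\}$ is partitioned into the fibers of the map $(p,q)\mapsto p-q$: every pair lies in exactly one diagonal, indexed by $d=p-q\in\bZ$, and these diagonals are pairwise disjoint and together exhaust the whole table. Consequently the orthonormal basis $(b_{p,q})$ splits as the disjoint union, over $d\in\bZ$, of the bases of the $\cD_d$ supplied by Corollary~\ref{cor:basis_of_D}. Next I would note that any two distinct subspaces $\cD_{d}$ and $\cD_{d'}$ are orthogonal, since their spanning basis vectors are mutually orthogonal members of the one orthonormal family $(b_{p,q})$. Finally, because the union of the bases of all the $\cD_d$ is exactly the complete orthonormal basis of $L^2(\bC,\Gaussian)$, the closed orthogonal sum $\bigoplus_{d\in\bZ}\cD_d$ contains a total set and hence coincides with the whole space, which is precisely \eqref{eq:decomposition_L2_into_diagonal_subspaces}.

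There is essentially no hard step here: once Proposition~\ref{prop:basis_in_L2} and Corollary~\ref{cor:basis_of_D} are available, the conclusion is the standard principle that grouping the vectors of an orthonormal basis into disjoint blocks yields an orthogonal decomposition of the Hilbert space into the closed spans of those blocks. The only point deserving a word of care is the passage from finite orthogonal sums to the infinite completed direct sum: one should confirm that the algebraic span of $\bigcup_{d\in\bZ}\cD_d$ is dense, so that its closure is all of $L^2(\bC,\Gaussian)$. This is immediate from the totality of the basis $(b_{p,q})$, since that span already contains every basis vector.
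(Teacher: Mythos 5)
Your argument is correct and is exactly the one the paper intends: the corollary is stated without an explicit proof, as an immediate consequence of Proposition~\ref{prop:basis_in_L2} and Corollary~\ref{cor:basis_of_D}, obtained by partitioning the orthonormal basis $(b_{p,q})$ according to the value of $p-q$. Your write-up simply makes that implicit reorganization explicit, including the density point at the end.
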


Here we show the generators of $\cD_1$ (green) and $\cD_{-2}$ (blue): 
\[
\begin{array}{ccccc}
m_{0,0} & m_{0,1} & \cellcolor{lightblue} m_{0,2} & m_{0,3} & \ddots \\
\cellcolor{green} m_{1,0} & m_{1,1} & m_{1,2} & \cellcolor{lightblue}  m_{1,3} & \ddots \\
m_{2,0} & \cellcolor{green}  m_{2,1} & m_{2,2} & m_{2,3} & \cellcolor{lightblue} \ddots \\
m_{3,0} & m_{3,1} & \cellcolor{green} m_{3,2} & m_{3,3} & \ddots \\
\ddots & \ddots & \ddots & \cellcolor{green} \ddots & \ddots
\end{array}
\qquad\quad
\begin{array}{ccccc}
b_{0,0} & b_{0,1} & \cellcolor{lightblue} b_{0,2} & b_{0,3} & \ddots \\
\cellcolor{green} b_{1,0} & b_{1,1} & b_{1,2} & \cellcolor{lightblue} b_{1,3} & \ddots \\
b_{2,0} & \cellcolor{green} b_{2,1} & b_{2,2} & b_{2,3} & \cellcolor{lightblue} \ddots \\
b_{3,0} & b_{3,1} & \cellcolor{green} b_{3,2} & b_{3,3} & \ddots \\
\ddots & \ddots & \ddots & \cellcolor{green} \ddots & \ddots
\end{array}
\]

\section{Bargmann--Segal--Fock spaces of polyanalytic functions}
\label{sec:spaces}

Fix $n$ in $\bN$.
Let $\cF_n$ be the space of $n$-polyanalytic functions
belonging to $L^2(\bC,\gamma)$,
and $\cF_{(n)}$ be the true-$n$-polyanalytic Fock space
defined in \cite{Vasilevski2000} by
\[
\cF_{(n)}\eqdef \{f\in \cF_n\colon\ f\perp \cF_{n-1}\}.
\]

\begin{prop}\label{prop:evaluation_functionals_on_Fn_are_bounded}
Let $R>0$.
Then there exists a number $C_{n,R}>0$
such that for every $f$ in $\cF_n$
and every $z$ in $\bC$ with $|z|\le R$,
\begin{equation}\label{eq:evaluation_functionals_on_Fn_are_bounded}
|f(z)|\le C_{n,R} \|f\|.
\end{equation}
\end{prop}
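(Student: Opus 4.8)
The plan is to represent the point evaluation $f\mapsto f(z_0)$ as an integral of $f$ against a bounded weight over a fixed disk, and then to compare the Lebesgue and the Gaussian measures on that disk. Throughout I fix $z_0$ with $|z_0|\le R$ and set $D\eqdef\{z\in\bC\colon |z-z_0|\le1\}$. Polyanalyticity of $f$ enters at exactly one point: it forces the circular means of $f$ around $z_0$ to be a low-degree polynomial in the radius.

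First I would record the structural form of an $n$-analytic function. Since $\frac{\partial^n}{\partial\conjz^n}f=0$, the same holds for the translate $\zeta\mapsto f(z_0+\zeta)$, and a standard integration argument in the antiholomorphic variable (the plane being simply connected) produces entire functions $\psi_0,\dots,\psi_{n-1}$ with
\[
f(z_0+\zeta)=\sum_{k=0}^{n-1}\overline{\zeta}^{\,k}\,\psi_k(\zeta).
\]
Writing $\psi_k(\zeta)=\sum_{j\ge0}a_{k,j}\zeta^j$ and setting $\zeta=s\econstant^{\imagunit\theta}$, integration over $\theta$ cancels every term with $j\ne k$, so the circular mean
\[
M(s)\eqdef\frac{1}{2\pi}\int_0^{2\pi}f(z_0+s\econstant^{\imagunit\theta})\,\dif\theta=\sum_{k=0}^{n-1}a_{k,k}\,s^{2k}
\]
is a polynomial in $s^2$ of degree at most $n-1$, whose constant term is $a_{0,0}=\psi_0(0)=f(z_0)$.

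Next I would recover $f(z_0)$ by a quadrature that annihilates the higher-order terms. Let $q$ be the polynomial of degree $n-1$ determined by the moment conditions $\int_0^1 x^k q(x)\,\dif x=2\,\delta_{k,0}$ for $k=0,\dots,n-1$; these are solvable because the associated Hilbert matrix $\bigl(\tfrac{1}{k+i+1}\bigr)_{k,i=0}^{n-1}$ is invertible, and $q$ depends only on $n$. Taking $W(s)\eqdef s\,q(s^2)$ and substituting $x=s^2$ gives $\int_0^1 M(s)\,W(s)\,\dif s=a_{0,0}=f(z_0)$; rewriting the $(s,\theta)$-integral as an area integral yields the mean-value representation
\[
f(z_0)=\frac{1}{2\pi}\int_{D}f(z)\,q\bigl(|z-z_0|^2\bigr)\,\dif\mu(z).
\]
The point of the factor $s$ in $W$ is that it cancels the polar Jacobian, so the weight $q(|z-z_0|^2)$ is bounded on $D$ by $\|q\|_{L^\infty[0,1]}$.

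Finally I would pass to the Gaussian norm. By Cauchy--Schwarz and $\mu(D)=\pi$,
\[
|f(z_0)|\le\frac{\|q\|_{L^\infty[0,1]}}{2\pi}\int_{D}|f|\,\dif\mu
\le\frac{\|q\|_{L^\infty[0,1]}}{2\pi}\sqrt{\pi}\Bigl(\int_{D}|f|^2\,\dif\mu\Bigr)^{1/2}.
\]
Since $\dif\mu(z)=\pi\,\econstant^{|z|^2}\dif\Gaussian(z)$ and $|z|\le R+1$ on $D$, we have $\int_{D}|f|^2\,\dif\mu\le\pi\,\econstant^{(R+1)^2}\|f\|^2$, which gives \eqref{eq:evaluation_functionals_on_Fn_are_bounded} with $C_{n,R}=\tfrac12\,\|q\|_{L^\infty[0,1]}\,\econstant^{(R+1)^2/2}$ (uniform in $z_0$, as $q$ depends only on $n$). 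The main point to get right is the truncation of the circular mean, i.e.\ that polyanalyticity forces $M$ to be a polynomial of degree $<n$ in $s^2$; the secondary subtlety is the choice $W(s)=s\,q(s^2)$, which removes the $1/s$ singularity and keeps the resulting area weight bounded.
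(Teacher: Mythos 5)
Your proposal is correct and follows essentially the same route as the paper: both construct the quadrature polynomial from the invertibility of the Hilbert matrix, derive the same mean-value representation $f(z_0)=\frac{1}{\pi}\int_{z_0+\bD}f(w)\,P_n(|w-z_0|^2)\,\dif\mu(w)$ (your $q$ is $2P_n$), and conclude with the Cauchy--Schwarz inequality after comparing $\mu$ with $\Gaussian$ on the disk. The only cosmetic difference is that you organize the cancellation through circular means of the decomposition $f(z_0+\zeta)=\sum_k\overline{\zeta}^{\,k}\psi_k(\zeta)$, while the paper integrates the double power series termwise.
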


\begin{proof}
Let $P_n$ be the polynomial in one variable of degree $\le n-1$
such that
\begin{equation}\label{eq:polynomial_orthogonal_to_positive_powers}
\int_0^1 P_n(x) x^j\,\dif{}x=\delta_{j,0}
\qquad(j\in\{0,\ldots,n-1\}).
\end{equation}
The existence and uniqueness of such a polynomial
follows from the invertibility of the Hilbert matrix
$\bigl[1/(j+k+1)\bigr]_{j,k=0}^{n-1}$.
Put
\[
C_{n,R}
\eqdef\left(\max_{x\in[0,1]}|P_n(x)|\right)\,
\left(\frac{1}{\pi}
\int_{(R+1)\bD} \econstant^{|w|^2}\,\dif\mu(w)\right)^{1/2}.
\]
Let $f\in\cF_n$ and $z\in\bC$, with $|z|\le R$.
It is known \cite[Section~1.1]{Balk1991}
that $f$ can be expanded into a uniformly convergent series
of the form
\[
f(w)=\sum_{j=0}^\infty \sum_{k=0}^{n-1} \alpha_{j,k}
(w-z)^j (\conjw-\conjz)^k,
\]
where $\alpha_{j,k}$ are some complex numbers.
Using the change of variables $w=z+r\econstant^{\imagunit\tht}$
and the property \eqref{eq:polynomial_orthogonal_to_positive_powers},
we obtain the following version of
the mean value property of polyanalytic functions:
\begin{equation}\label{eq:polyanalytic_mean_value}
f(z)=\frac{1}{\pi} \int_{z+\bD}f(w) P_n(|w-z|^2)\,\dif\mu(w).
\end{equation}
After that, estimating $|P_n|$ by its maximum value,
multiplying and dividing by $\econstant^{|w|^2/2}$,
and applying the Schwarz inequality,
we arrive at \eqref{eq:evaluation_functionals_on_Fn_are_bounded}.
\end{proof}

\begin{remark}
The constant $C_{n,R}$, found in the proof of Proposition~\ref{prop:evaluation_functionals_on_Fn_are_bounded},
is not optimal.
The exact upper bound for the evaluation functionals in $\cF_n$
is given in Corollary~\ref{cor:evaluation_Fn_upper_bound}.
\end{remark}

\begin{prop}
$\cF_n$ is a RKHS.
\end{prop}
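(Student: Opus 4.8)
The plan is to show that $\cF_n$ is a reproducing kernel Hilbert space by verifying the two defining requirements: that $\cF_n$ is a closed subspace of $L^2(\bC,\gamma)$ (hence complete in the inherited inner product) and that every point-evaluation functional is bounded. The boundedness of evaluations is already the content of Proposition~\ref{prop:evaluation_functionals_on_Fn_are_bounded}, so the substance of the present proof is closedness; once both are in hand, the Riesz representation theorem supplies, for each $z$ in $\bC$, a unique kernel function $K_z\in\cF_n$ with $f(z)=\langle f,K_z\rangle$ for all $f\in\cF_n$, which is exactly the RKHS structure.

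First I would establish that $\cF_n$ is closed. Suppose $(f_k)$ is a sequence in $\cF_n$ converging in $L^2(\bC,\gamma)$-norm to some $f\in L^2(\bC,\gamma)$; I must show $f\in\cF_n$, i.e.\ that $f$ agrees almost everywhere with an $n$-analytic function. The key is that norm convergence forces local uniform convergence of the $f_k$ on compact sets. Concretely, fix $R>0$; applying the estimate \eqref{eq:evaluation_functionals_on_Fn_are_bounded} to the difference $f_k-f_\ell$ gives, for all $|z|\le R$,
\[
|f_k(z)-f_\ell(z)|\le C_{n,R}\,\|f_k-f_\ell\|,
\]
so $(f_k)$ is uniformly Cauchy on the closed disk of radius $R$. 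Hence $f_k$ converges uniformly on compact subsets of $\bC$ to a continuous limit $\tilde f$, and after passing to a subsequence converging pointwise a.e.\ we identify $\tilde f=f$ almost everywhere.

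Next I would argue that the locally uniform limit $\tilde f$ is itself $n$-analytic, so that $f$ has an $n$-analytic representative and therefore lies in $\cF_n$. Here I would invoke the local series representation used in the proof of Proposition~\ref{prop:evaluation_functionals_on_Fn_are_bounded}: around each point $z_0$, every element of $\cF_n$ is a polynomial in $\conjw-\conjz_0$ of degree $\le n-1$ with $\cF_1$-type (analytic) coefficients, equivalently $\partial^n\!/\partial\conjw^{\,n}$ annihilates it. Because the derivatives up to order $n$ in $\conjw$ can be recovered from locally uniform limits (differentiation under a locally uniform limit of polyanalytic functions is controlled by Cauchy-type integral formulas, exactly as in the analytic case), the condition $\partial^n f_k/\partial\conjw^{\,n}=0$ passes to the limit, giving $\partial^n\tilde f/\partial\conjw^{\,n}=0$ and hence $n$-analyticity of $\tilde f$.

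The main obstacle is this last limiting step: one must ensure that the property of being $n$-analytic survives passage to a locally uniform limit. I expect this to be the technically delicate part, since it requires controlling the $\conjw$-derivatives, not merely the values, of $f_k$. The cleanest route is to encode $n$-analyticity integrally — via the mean value identity \eqref{eq:polyanalytic_mean_value} together with the annihilation relations that characterize membership in $\cF_n$ — and then take limits inside those integrals, which is justified by the uniform convergence on the relevant disks. Once closedness is secured, combining it with Proposition~\ref{prop:evaluation_functionals_on_Fn_are_bounded} and the Riesz representation theorem completes the proof that $\cF_n$ is a RKHS.
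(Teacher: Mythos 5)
Your proof follows essentially the same route as the paper: the evaluation bound of Proposition~\ref{prop:evaluation_functionals_on_Fn_are_bounded} turns an $L^2(\bC,\gamma)$-Cauchy sequence in $\cF_n$ into one that converges uniformly on compacts, the locally uniform limit is $n$-analytic, the $L^2$- and pointwise limits are identified a.e., and closedness together with bounded evaluations yields the reproducing kernel. The only difference is that the paper delegates the stability of $n$-analyticity under locally uniform limits to \cite[Corollary~1.8]{Balk1991}, whereas you sketch a direct argument for that step.
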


\begin{proof}
Let $(g_n)_{n\in\bN}$ be a Cauchy sequence in $\cF_n$.
By Proposition~\ref{prop:evaluation_functionals_on_Fn_are_bounded},
this sequence converges pointwise on $\bC$
and uniformly on compacts to a function $f$.
By \cite[Corollary~1.8]{Balk1991},
the function $f$ is $n$-analytic.
On the other hand, let $h$ be the limit of the sequence
$(g_n)_{n\in\bN}$ in $L^2(\bC,\Gaussian)$.
Then for every compact $K$ in $\bC$,
the sequence of the restrictions $g_n|_K$
converges in the $L^2(K,\Gaussian)$-norm
simultaneously to $f|_K$ and to $h|_K$.
Therefore $h$ coincides with $f$ a.e.
and $f\in L^2(\bC,\Gaussian)$, i.e. $f\in\cF_n$.
So, $\cF_n$ is a Hilbert space.
The boundedness of the evaluation functionals
is established in Proposition~\ref{prop:evaluation_functionals_on_Fn_are_bounded}.
\end{proof}

\begin{prop}\label{prop:basis_in_Fn}
The family $(b_{p,q})_{p\in\bNz,q<n}$
is an orthonormal basis of $\cF_{n}$.
\end{prop}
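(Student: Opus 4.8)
The plan is to record the easy half and then prove the two set inclusions. Orthonormality is immediate: $(b_{p,q})_{p\in\bNz,\,q<n}$ is a subfamily of the orthonormal basis of $L^2(\bC,\gamma)$ produced in Proposition~\ref{prop:basis_in_L2}. Moreover, by~\eqref{eq:b_via_m} each $b_{p,q}$ with $q<n$ is a finite linear combination of monomials $m_{p-s,q-s}$ whose degree in $\conjz$ equals $q-s\le q\le n-1$; hence $b_{p,q}$ is a polynomial that is $n$-analytic and lies in $L^2(\bC,\gamma)$, so $b_{p,q}\in\cF_n$. Since $\cF_n$ is closed, the closed span $V\eqdef\clos\bigl(\lin\{b_{p,q}\colon p\in\bNz,\ q<n\}\bigr)$ satisfies $V\subseteq\cF_n$, and it remains to prove $\cF_n\subseteq V$.

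For the reverse inclusion I would argue diagonal by diagonal, using the orthogonal decomposition $L^2(\bC,\gamma)=\bigoplus_{d\in\bZ}\cD_d$ of Corollary~\ref{cor:decomposition_L2_into_diagonals}. Fix $f\in\cF_n$ and let $f_d\eqdef P_{\cD_d}f$ be its component in $\cD_d$; by Corollary~\ref{cor:polar_description_of_D} the projection $P_{\cD_d}$ extracts the angular Fourier mode of frequency $d$, so that $f_d(r\tau)=\tau^{d}\,\frac{1}{2\pi}\int_0^{2\pi}f(r\econstant^{\imagunit\phi})\econstant^{-\imagunit d\phi}\,\dif\phi$ for almost every $r\ge0$. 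The key structural input is that $f$, being $n$-analytic on $\bC$, admits a global decomposition $f=\sum_{k=0}^{n-1}\conjz^{\,k}g_k$ with $g_k$ entire (obtained by successively peeling off $\partial/\partial\conjz$-derivatives, or from the series in \cite[Section~1.1]{Balk1991}). Expanding each $g_k$ in its Taylor series represents $f$ as a series in the monomials $m_{j,k}$ with $0\le k<n$ that converges uniformly on compact sets, in particular uniformly on each circle $\{|z|=r\}$.

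With this in hand, the main step is to compute $f_d$. Because $m_{j,k}\in\cD_{j-k}$, integrating the uniformly convergent series term by term over the angle selects exactly the finitely many indices with $j-k=d$ and $0\le k<n$; that is,
\[
f_d=\sum_{k=\max\{0,-d\}}^{n-1} a_{d+k,k}\,m_{d+k,k},
\]
a finite combination of the first $\min\{n,n+d\}$ monomials of the diagonal $d$ (and $f_d=0$ once $d\le-n$). Thus $f_d$ lies in the truncated diagonal space, which by~\eqref{eq:truncated_diagonal_through_b} equals $\lin\{b_{p,q}\colon p-q=d,\ q<n\}\subseteq V$ (one checks that, under $p-q=d$, the condition $\min\{p,q\}<\min\{n,n+d\}$ is exactly $q<n$). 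Since $f=\sum_d f_d$ in $L^2(\bC,\gamma)$ and $V$ is closed, we conclude $f\in V$, whence $\cF_n=V$; along the way this also yields~\eqref{eq:cFn_decomposition_into_diagonal_subspaces}.

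The part I expect to require the most care is the transfer from the analytic, pointwise picture to the Hilbert-space geometry: justifying that the $L^2$-projection $P_{\cD_d}$ really is the $d$-th angular Fourier coefficient, and that the expansion of $f$ may be integrated term by term against $\econstant^{-\imagunit d\phi}$. Uniform convergence on the compact circle $\{|z|=r\}$ legitimizes the interchange for each fixed $r$, and the resulting finite sum is automatically square-integrable; the genuine subtlety is checking that this pointwise-computed $f_d$ agrees almost everywhere with the orthogonal projection, which follows from the standard identification of $\bigoplus_d\cD_d$ with the angular Fourier decomposition of $L^2(\bC,\gamma)$. An alternative, operator-theoretic route avoids the series: one verifies $\frac{\partial}{\partial\conjz}b_{p,q}=\sqrt{q}\,b_{p,q-1}$, notes that $A^\dagger=\conjz-\frac{\partial}{\partial z}$ is the $L^2(\bC,\gamma)$-adjoint of $\partial/\partial\conjz$, and deduces $\langle f,b_{p,q}\rangle=\langle(\partial/\partial\conjz)^n f,\,\psi\rangle=0$ for $q\ge n$ by writing $b_{p,q}=(A^\dagger)^n\psi$; there the obstacle merely shifts to justifying the integration by parts, i.e. the vanishing of the boundary terms at infinity under the Gaussian weight.
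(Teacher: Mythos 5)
Your proof is correct, but it takes a genuinely different route from the paper's. Both arguments rest on the same structural input --- Balk's expansion $f=\sum_{k=0}^{n-1}\sum_{j}\alpha_{j,k}m_{j,k}$ converging uniformly on compacts, together with the angular orthogonality of distinct diagonals --- but the logical shape differs. The paper proves totality by duality: it assumes $\langle f,b_{p,q}\rangle=0$ for all $q<n$, computes these inner products as limits of integrals over disks $r\bD$, and obtains for each diagonal $d$ a homogeneous linear system with matrix $M_d=\bigl[\langle m_{d+k,k},b_{d+q,q}\rangle\bigr]$, which is upper triangular with nonzero diagonal by~\eqref{eq:product_b_m}, hence invertible, forcing all $\alpha_{j,k}=0$. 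You instead prove the inclusion $\cF_n\subseteq V$ directly: you identify the projection $P_{\cD_d}f$ with the $d$-th angular Fourier mode, integrate the expansion term by term over each circle to see that $P_{\cD_d}f$ is the \emph{finite} sum $\sum_k\alpha_{d+k,k}m_{d+k,k}$ lying in $\cD_{d,\min\{n,n+d\}}$, and sum over $d$. Your version avoids the triangular-matrix computation entirely (you only need~\eqref{eq:truncated_diagonal_through_b}, not the invertibility of $M_d$) and delivers the decomposition~\eqref{eq:decomposition_Fn_into_D} as a byproduct rather than as a separate corollary; the price is that you must justify the identification of the abstract orthogonal projection onto $\cD_d$ with the pointwise angular Fourier coefficient, which you correctly flag and which follows from Corollaries~\ref{cor:polar_description_of_D} and~\ref{cor:decomposition_L2_into_diagonals} by a routine Fubini argument. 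The paper's route keeps everything inside inner-product computations at the cost of the extra combinatorial input~\eqref{eq:product_b_m}. Your closing operator-theoretic sketch via $(\partial/\partial\conjz)^{*}=A^\dagger$ is a plausible third route, but, as you note, it is only a sketch until the boundary terms in the Gaussian integration by parts are controlled.
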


\begin{proof}
We already know that this family is contained in $\cF_n$
and is orthonormal.
Let us verify the total property.
Our reasoning uses ideas of
Ramazanov \cite[proof of Theorem~2]{Ramazanov1999}.

Suppose that $f\in\cF_n$ and $\langle f,b_{p,q}\rangle=0$
for every $p\in\bNz$, $q<n$.
We have to show that $f=0$.
By the decomposition of polyanalytic functions
\cite[Section~1.1]{Balk1991},
there exists a family of numbers $(\alpha_{j,k})_{j\in\bNz,k<n}$ such that
\[
f(z)=\sum_{k=0}^{n-1}\sum_{j=0}^\infty
\alpha_{j,k} m_{j,k}(z),
\]
where each of the inner series converges pointwise on $\bC$
and uniformly on compacts.
For every $\nu$ in $\bNz$,
we denote by $S_\nu$ the partial sum
$S_\nu\eqdef \sum_{k=0}^{n-1}\sum_{j=0}^\nu \alpha_{j,k}m_{j,k}$.
Given $r>0$, the sequence $(S_\nu)_{\nu\in\bNz}$
converges to $f$ uniformly on $r\bD$.
For every $p,q$ in $\bNz$ with $q<n$,
using the orthogonality on $r\bD$
between $b_{p,q}$ and $m_{j,k}$
with $j-k\ne p-q$, we obtain
\[
\int_{r\bD} f\,\overline{b_{p,q}}\,\dif\Gaussian
=\lim_{\nu\to\infty}
\int_{r\bD} S_\nu\,\overline{b_{p,q}}\,\dif\Gaussian
=\sum_{k=0}^{n-1} \alpha_{k+p-q,k}
\int_{r\bD} m_{k+p-q,k} \overline{b_{p,q}}\,\dif\Gaussian.
\]
The functions $f\,\overline{b_{p,q}}$
and $m_{k+p-q,k}\,\overline{b_{p,q}}$
are integrable on $\bC$
with respect to the measure $\Gaussian$.
Therefore their integrals over $\bC$ are the limits
of the corresponding integrals over $r\bD$, as $r$ tends to infinity.
Since $\langle f,b_{p,q}\rangle=0$,
the coefficients $\alpha_{j,k}$
must satisfy the following infinite system
of homogeneous linear equations:
\begin{equation}\label{eq:system_of_equations_on_alpha}
\sum_{k=0}^{n-1} 
\langle m_{k+p-q,k}, b_{p,q}\rangle
\alpha_{k+p-q,k}
=0
\qquad(p\in\bNz,\ 0\le q<n).
\end{equation}
Now we fix $d>-n$ and restrict ourselves to the equations
\eqref{eq:system_of_equations_on_alpha} with $p-q=d$,
which yields an $s\times s$ system represented by the matrix $M_d$,
where $s=\min\{n,n+d\}$, and
\[
M_d\eqdef\left[
\langle m_{d+k,k},b_{d+q,q}
\rangle
\right]_{q,k=\max\{0,-d\}}^{n-1}.
\]
By \eqref{eq:product_b_m}, $M_d$ is an upper triangular matrix
with nonzero diagonal entries, hence $M_d$ is invertible.
So, all coefficients $\alpha_{j,k}$ are zero.
\end{proof}

\begin{cor}\label{cor:basis_in_true_polyanalytic}
$\cF_{(n)}$ is a RKHS,
and the sequence $(b_{p,n-1})_{p\in\bNz}$
is an orthonormal basis of $\cF_{(n)}$.
\end{cor}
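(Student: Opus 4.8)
The plan is to read everything off Proposition~\ref{prop:basis_in_Fn}, applied to the two indices $n$ and $n-1$, and to combine it with the elementary fact that a closed subspace of a RKHS is again a RKHS. For the RKHS assertion I would argue as follows. By its definition $\cF_{(n)}$ is the orthogonal complement of $\cF_{n-1}$ inside the Hilbert space $\cF_n$, hence a closed subspace and in particular complete. The evaluation functionals on $\cF_{(n)}$ are simply the restrictions of the evaluation functionals on $\cF_n$, which are bounded because $\cF_n$ is a RKHS; restrictions of bounded functionals are bounded, so $\cF_{(n)}$ is a RKHS. (When $n=1$ one reads $\cF_0=\{0\}$, so that $\cF_{(1)}=\cF_1$ and the statement reduces to the analytic case.)

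For the basis I would invoke Proposition~\ref{prop:basis_in_Fn} twice: $(b_{p,q})_{p\in\bNz,\,q\le n-1}$ is an orthonormal basis of $\cF_n$, and $(b_{p,q})_{p\in\bNz,\,q\le n-2}$ is an orthonormal basis of $\cF_{n-1}$. Since the full table $(b_{p,q})_{p,q\in\bNz}$ is orthonormal (Proposition~\ref{prop:basis_in_L2}), every vector with $q=n-1$ is orthogonal to all vectors with $q\le n-2$, hence orthogonal to $\cF_{n-1}$, so it belongs to $\cF_{(n)}$. Conversely, each $f\in\cF_{(n)}\subseteq\cF_n$ expands as $f=\sum_{p\in\bNz}\sum_{q=0}^{n-1}\langle f,b_{p,q}\rangle\,b_{p,q}$, and the requirement $f\perp\cF_{n-1}$ annihilates every coefficient with $q\le n-2$; thus $f\in\clos\lin\{b_{p,n-1}\colon p\in\bNz\}$. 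This proves $\cF_{(n)}=\clos\lin\{b_{p,n-1}\colon p\in\bNz\}$, and since $(b_{p,n-1})_{p\in\bNz}$ is orthonormal it is an orthonormal basis of $\cF_{(n)}$.

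There is no genuine obstacle here: once Proposition~\ref{prop:basis_in_Fn} is available, the corollary is a short bookkeeping argument. The only point that deserves a moment's attention is the claim that the orthogonal complement of $\cF_{n-1}$ in $\cF_n$ is spanned precisely by the ``new'' basis vectors $b_{p,n-1}$; this rests entirely on the orthonormality of the whole family $(b_{p,q})_{p,q\in\bNz}$ from Proposition~\ref{prop:basis_in_L2}, which decouples the index $q=n-1$ from the indices $q\le n-2$ and thereby identifies $\cF_{(n)}$ with the closed span of the last diagonal band.
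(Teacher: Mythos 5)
Your argument is correct and is precisely the (implicit) route the paper intends: the corollary is stated without proof as an immediate consequence of Proposition~\ref{prop:basis_in_Fn}, obtained by subtracting the orthonormal basis of $\cF_{n-1}$ from that of $\cF_n$ and noting that a closed subspace of a RKHS inherits bounded evaluation functionals. Nothing is missing.
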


We denote by $P_n$ and $P_{(n)}$ the orthogonal projections
acting in $L^2(\bC,\Gaussian)$,
whose images are $\cF_n$ and $\cF_{(n)}$, respectively.
They can be explicitly defined in terms
of the corresponding reproducing kernels:
\[
(P_n f)(z)=\langle f,K_{n,z} \rangle,\qquad
(P_{(n)}f)(z)=\langle f,K_{(n),z} \rangle.
\]

\begin{cor}
If $f\in\cF_n$, then
\[
f=\sum_{j=0}^\infty\sum_{k=0}^{n-1}
\langle f,b_{j,k}\rangle b_{j,k},
\]
where the series converges in the $L^2(\bC,\Gaussian)$-norm
and uniformly on the compacts.
In particular,
if $f\in\cF_{(n)}$, then
\begin{equation}\label{eq:f_true_polyanalytic_decomposition}
f=\sum_{j=0}^\infty
\langle f,b_{j,n-1}\rangle b_{j,n-1}.
\end{equation}
\end{cor}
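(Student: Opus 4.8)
The plan is to separate the two assertions. The $L^2$-convergence is nothing more than the abstract Fourier expansion with respect to an orthonormal basis, whereas the uniform convergence on compacts will come from the reproducing-kernel structure of $\cF_n$.

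First I would record that, by Proposition~\ref{prop:basis_in_Fn}, the family $(b_{j,k})_{j\in\bNz,\,k<n}$ is an orthonormal basis of $\cF_n$. Hence, for any $f\in\cF_n$, the partial sums $S_N\eqdef\sum_{j=0}^N\sum_{k=0}^{n-1}\langle f,b_{j,k}\rangle\,b_{j,k}$ converge to $f$ in the $L^2(\bC,\Gaussian)$-norm. Grouping the finite inner sum over $k$ with each outer index $j$ is harmless, since the convergence of an orthonormal expansion is unconditional.

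Next I would upgrade norm convergence to uniform convergence on compacts by means of Proposition~\ref{prop:evaluation_functionals_on_Fn_are_bounded}. Fix $R>0$. Since $\cF_n$ is a RKHS, the evaluation functional at $z$ is represented by the reproducing kernel $K_{n,z}$, and the bound \eqref{eq:evaluation_functionals_on_Fn_are_bounded} means exactly that $\|K_{n,z}\|\le C_{n,R}$ whenever $|z|\le R$. Consequently, for every such $z$,
\[
|S_N(z)-f(z)|=|\langle S_N-f,K_{n,z}\rangle|\le C_{n,R}\,\|S_N-f\|,
\]
and the right-hand side tends to $0$ uniformly in $z$. Thus $(S_N)$ converges to $f$ uniformly on each closed disk, and therefore on every compact subset of $\bC$.

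Finally, for the special case $f\in\cF_{(n)}$, I would note that $\cF_{n-1}$ is spanned by the $b_{j,k}$ with $k\le n-2$, so the orthogonality $f\perp\cF_{n-1}$ forces $\langle f,b_{j,k}\rangle=0$ for all $k<n-1$. Only the terms with $k=n-1$ survive, the double sum collapses to \eqref{eq:f_true_polyanalytic_decomposition}, and both modes of convergence are inherited from the general case (equivalently, one applies the RKHS argument directly to the basis furnished by Corollary~\ref{cor:basis_in_true_polyanalytic}). I do not expect a genuine obstacle here: the only delicate point is the passage from norm to uniform convergence, which hinges on the \emph{uniformity in $z$} of the evaluation bound over a fixed disk, precisely what Proposition~\ref{prop:evaluation_functionals_on_Fn_are_bounded} supplies.
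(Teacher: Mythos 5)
Your proof is correct and is exactly the argument the paper intends: the paper states this as an unproved corollary of Proposition~\ref{prop:basis_in_Fn} (for the $L^2$-expansion) together with Proposition~\ref{prop:evaluation_functionals_on_Fn_are_bounded} (to upgrade norm convergence to uniform convergence on compacts via $|S_N(z)-f(z)|\le C_{n,R}\|S_N-f\|$), and the reduction to \eqref{eq:f_true_polyanalytic_decomposition} by killing the coefficients with $k<n-1$ is the standard one. No gaps.
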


For example, $(b_{p,2})_{p\in\bNz}$ is an orthonormal basis of $\cF_{(3)}$,
and $(b_{p,q})_{p\in\bNz,q<3}$ is an orthonormal basis of $\cF_3$:
\[
\begin{array}{ccccc}
b_{0,0} & b_{0,1} & \cellcolor{green} b_{0,2} & b_{0,3} & \ldots \\
b_{1,0} & b_{1,1} & \cellcolor{green} b_{1,2} & b_{1,3} & \ldots \\
b_{2,0} & b_{2,1} & \cellcolor{green} b_{2,2} & b_{2,3} & \ldots \\
b_{3,0} & b_{3,1} & \cellcolor{green} b_{3,2} & b_{3,3} & \ldots \\
\vdots & \vdots & \cellcolor{green} \vdots & \vdots & \ddots
\end{array}
\qquad\qquad
\begin{array}{ccccc}
\cellcolor{green} b_{0,0} & \cellcolor{green} b_{0,1} & \cellcolor{green} b_{0,2} & b_{0,3} & \ldots \\
\cellcolor{green} b_{1,0} & \cellcolor{green} b_{1,1} & \cellcolor{green} b_{1,2} & b_{1,3} & \ldots \\
\cellcolor{green} b_{2,0} & \cellcolor{green} b_{2,1} & \cellcolor{green} b_{2,2} & b_{2,3} & \ldots \\
\cellcolor{green} b_{3,0} & \cellcolor{green} b_{3,1} & \cellcolor{green} b_{3,2} & b_{3,3} & \ldots \\
\cellcolor{green} \vdots & \cellcolor{green} \vdots & \cellcolor{green} \vdots & \vdots & \ddots
\end{array}
\]
Using Proposition~\ref{prop:basis_in_Fn},
Corollary~\ref{cor:basis_of_D},
and formula~\eqref{eq:truncated_diagonal_through_b} gives
\begin{equation}\label{eq:truncated_diagonal_as_intersection}
\cD_d\cap\cF_n
=\begin{cases}
\cD_{d,\min\{n,n+d\}}, & d\ge -n+1; \\
\{0\}, & d<-n+1.
\end{cases}
\end{equation}
Here is a description of the subspaces $\cD_{d,m}$
in terms of the polar coordinates.

\begin{prop}\label{prop:truncated_diagonals_in_polar_coordinates}
For every $m$ in $\bNz$ and every $d$ in $\bZ$ with $d\ge-m+1$,
the space $\cD_{d,m}$ consists of all functions of the form
\[
f(r\tau)
= \tau^d r^{|d|} Q(r^2)\qquad
(r\ge0,\ \tau\in\bT),
\]
where $Q$ is a polynomial of degree $\le m-1$.
Moreover,
\[
\|f\|=\|Q\|_{L^2([0,+\infty),x^{|d|}\econstant^{-x}\,\dif{}x)}.
\]
\end{prop}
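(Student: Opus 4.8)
The plan is to reduce the whole statement to the polar-coordinate formula for the basis functions supplied by Corollary~\ref{cor:b_via_Laguerre_function}, and then to recognize the resulting span as a space of polynomials in $r^2$. First I would rewrite the truncated diagonal in terms of the orthonormal basis: by \eqref{eq:truncated_diagonal_through_b} we have $\cD_{d,m}=\lin\{b_{p,q}\colon p-q=d,\ \min\{p,q\}<m\}$. Writing $\ell\eqdef\min\{p,q\}$, the admissible pairs are exactly $(p,q)=(\ell+d,\ell)$ when $d\ge0$ and $(p,q)=(\ell,\ell+|d|)$ when $d<0$, with $\ell$ ranging over $\{0,\ldots,m-1\}$; in either case there are precisely $m$ generators. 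Applying Corollary~\ref{cor:b_via_Laguerre_function} together with the definition \eqref{eq:laguerre_function} of the normalized Laguerre function collapses both cases into the single identity
\[
b_{p,q}(r\tau)=(-1)^\ell\sqrt{\frac{\ell!}{(\ell+|d|)!}}\,\tau^d\,r^{|d|}\,L_\ell^{(|d|)}(r^2)\qquad(r\ge0,\ \tau\in\bT),
\]
in which the factor $\tau^d r^{|d|}$ is common to every generator (here the exponential $\econstant^{r^2/2}$ from the Corollary cancels the $\econstant^{-r^2/2}$ hidden in $\ell_\ell^{(|d|)}$).

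Next I would settle the span. Since $L_0^{(|d|)},\ldots,L_{m-1}^{(|d|)}$ have the distinct degrees $0,1,\ldots,m-1$, they form a basis of the space of polynomials of degree $\le m-1$. Hence, after factoring out the common $\tau^d r^{|d|}$, the linear combinations of the $m$ generators produce exactly the functions $r\tau\mapsto\tau^d r^{|d|}Q(r^2)$ with $\deg Q\le m-1$, which is the asserted description of $\cD_{d,m}$.

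Finally, for the norm I would pass to polar coordinates $z=r\tau$, using $\dif\mu(z)=r\,\dif{}r\,\dif\theta$, so that $|f(r\tau)|^2=r^{2|d|}\,|Q(r^2)|^2$ is independent of $\tau$. The angular integral then contributes a factor $2\pi$ that cancels the $\frac1\pi$ in $\dif\gamma$, and the substitution $x=r^2$ turns $2\,r^{2|d|+1}\econstant^{-r^2}\,\dif{}r$ into $x^{|d|}\econstant^{-x}\,\dif{}x$, yielding $\|f\|=\|Q\|_{L^2([0,+\infty),\,x^{|d|}\econstant^{-x}\,\dif{}x)}$.

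The computations are routine; the only point requiring care is the uniform treatment of the two signs of $d$, namely checking that $\min\{p,q\}$ and $|p-q|$ play symmetric roles, so that the exponent $|d|$ (rather than $d$) appears in both cases and the Laguerre order is $|d|$ throughout. The hypothesis $d\ge-m+1$ is not needed for the argument itself; it merely delimits the range in which $\cD_{d,m}$ actually arises, via \eqref{eq:truncated_diagonal_as_intersection}.
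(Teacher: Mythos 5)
Your proof is correct and follows essentially the same route as the paper's own (one-line) proof: both reduce $\cD_{d,m}$ to the span of the $b_{p,q}$ via \eqref{eq:truncated_diagonal_through_b} and then exploit the Laguerre-polynomial form of these basis functions in polar coordinates, you simply spelling out the details (the explicit formula from Corollary~\ref{cor:b_via_Laguerre_function}, the degree argument, and the change of variables for the norm). Your side remark that the hypothesis $d\ge-m+1$ is not needed for the argument is also accurate.
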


\begin{proof}
Apply formula~\eqref{eq:truncated_diagonal_through_b}
and the orthonormality of the polynomials $L_k^{(|d|)}$
in the space $L^2([0,+\infty),x^{|d|}\econstant^{-x}\,\dif{}x)$.
\end{proof}

The decomposition of $\cF_n$ into a direct sum of ``truncated diagonals'' shown below follows
from Proposition~\ref{prop:basis_in_Fn}
and plays a crucial role in the study of radial operators.

\begin{prop}\label{prop:decomposition_Fn_into_D}
\begin{equation}\label{eq:decomposition_Fn_into_D}
\cF_n = \bigoplus_{d=-n+1}^\infty \cD_{d,\min\{n,n+d\}}.
\end{equation}
\end{prop}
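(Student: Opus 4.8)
The plan is to read off the decomposition directly from the orthonormal basis of $\cF_n$ furnished by Proposition~\ref{prop:basis_in_Fn}, namely $(b_{p,q})_{p\in\bNz,\,0\le q<n}$, and to regroup it according to the diagonal index $d=p-q$. The key observation is that, for a fixed $d$, the basis vectors of $\cF_n$ lying on the diagonal $d$ are precisely those $b_{q+d,q}$ for which $0\le q<n$ and $q+d\ge0$; that is, $\max\{0,-d\}\le q<n$. This index set is nonempty exactly when $\max\{0,-d\}<n$, i.e.\ $d\ge -n+1$, which explains why the direct sum starts at $d=-n+1$.

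First I would split into the cases $d\ge0$ and $-n+1\le d<0$ and count the basis vectors on the diagonal $d$. When $d\ge0$, the admissible $q$ run over $0\le q<n$, giving $n=\min\{n,n+d\}$ vectors; when $d<0$, they run over $-d\le q<n$, giving $n+d=\min\{n,n+d\}$ vectors. In either case I would then match this set with the generators of $\cD_{d,\min\{n,n+d\}}$ supplied by \eqref{eq:truncated_diagonal_through_b}: writing $m=\min\{n,n+d\}$ and $p=q+d$, the condition $\min\{p,q\}<m$ becomes $q<n$ (when $d\ge0$), or $q<n$ together with $q\ge-d$ (when $d<0$), which reproduces exactly the same collection of $b_{q+d,q}$. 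Hence the block of the basis lying on the diagonal $d$ is an orthonormal basis of $\cD_{d,\min\{n,n+d\}}$.

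Finally, since the blocks corresponding to distinct $d$ are disjoint and together exhaust the whole basis $(b_{p,q})_{p\in\bNz,\,0\le q<n}$, and since the closed spans of the blocks of a partition of an orthonormal basis form an orthogonal decomposition of the ambient Hilbert space, I would conclude that $\cF_n=\bigoplus_{d=-n+1}^\infty\cD_{d,\min\{n,n+d\}}$. Orthogonality of the summands can also be seen from Corollary~\ref{cor:decomposition_L2_into_diagonals}, because $\cD_{d,m}\subseteq\cD_d$ and the $\cD_d$ are mutually orthogonal.

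The only genuine content is the elementary case analysis matching the truncation parameter $\min\{n,n+d\}$ with the number of surviving basis vectors on each diagonal; once this bookkeeping is settled, the statement is immediate from Proposition~\ref{prop:basis_in_Fn} and \eqref{eq:truncated_diagonal_through_b}. I expect this indexing check to be the main (and essentially the sole) obstacle. An equivalent alternative route, which bypasses the explicit count, is to intersect the global decomposition $L^2(\bC,\Gaussian)=\bigoplus_{d\in\bZ}\cD_d$ of Corollary~\ref{cor:decomposition_L2_into_diagonals} with $\cF_n$ and invoke \eqref{eq:truncated_diagonal_as_intersection}.
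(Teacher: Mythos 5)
Your proof is correct and follows essentially the same route as the paper, which simply asserts that the decomposition follows from Proposition~\ref{prop:basis_in_Fn} by regrouping the orthonormal basis $(b_{p,q})_{p\in\bNz,\,q<n}$ along diagonals and identifying each block with $\cD_{d,\min\{n,n+d\}}$ via \eqref{eq:truncated_diagonal_through_b}. Your index bookkeeping (including the threshold $d\ge -n+1$ and the count $\min\{n,n+d\}$) is accurate, and the alternative via \eqref{eq:truncated_diagonal_as_intersection} is the same observation in disguise.
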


Let us illustrate Proposition~\ref{prop:decomposition_Fn_into_D} for $n=3$ with a table (we have marked in different shades of blue
the basic functions that generate each truncated diagonal):
\[
\begin{array}{ccccc}
\cellcolor{blue!32} b_{0,0} &
\cellcolor{blue!24} b_{0,1} &
\cellcolor{blue!16} b_{0,2} &
b_{0,3} & \ldots \\
\cellcolor{blue!40} b_{1,0} &
\cellcolor{blue!32} b_{1,1} &
\cellcolor{blue!24} b_{1,2} &
b_{1,3} & \ldots \\
\cellcolor{blue!48} b_{2,0} &
\cellcolor{blue!40} b_{2,1} &
\cellcolor{blue!32} b_{2,2} &
b_{2,3} & \ldots \\
\cellcolor{blue!56} b_{3,0} &
\cellcolor{blue!48} b_{3,1} &
\cellcolor{blue!40} b_{3,2} &
b_{3,3} & \ldots \\
\vdots & \vdots & \vdots & \vdots & \ddots
\end{array}
\]
The upcoming fact was proved by Vasilevski~\cite{Vasilevski2000}.
We obtain it as a corollary from 
Proposition~\ref{prop:basis_in_L2}
and Corollary~\ref{cor:basis_in_true_polyanalytic}.

\begin{cor}\label{cor:decomposition_L2_into_true_polyanalytic}
The space $L^2(\bC,\Gaussian)$ is the orthogonal sum
of the subspaces $\cF_{(m)}$, $m\in\bN$:
\[
L^2(\bC,\Gaussian)=\bigoplus_{m\in\bN}\cF_{(m)}.
\]
\end{cor}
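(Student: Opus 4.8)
The plan is to read the decomposition directly off the two facts already established: that the complex Hermite polynomials $(b_{p,q})_{p,q\in\bNz}$ form an orthonormal basis of $L^2(\bC,\Gaussian)$ (Proposition~\ref{prop:basis_in_L2}), and that for each $m$ in $\bN$ the single column $(b_{p,m-1})_{p\in\bNz}$ is an orthonormal basis of $\cF_{(m)}$ (Corollary~\ref{cor:basis_in_true_polyanalytic}). The key observation is that organizing the global orthonormal basis by the value of its second index $q$ splits it into exactly the per-column bases, and the column with $q=m-1$ corresponds precisely to $\cF_{(m)}$, with $m=q+1$ ranging over all of $\bN$ as $q$ ranges over $\bNz$.

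First I would fix distinct $m,m'$ in $\bN$ and check that $\cF_{(m)}\perp\cF_{(m')}$: their bases $(b_{p,m-1})_{p\in\bNz}$ and $(b_{p',m'-1})_{p'\in\bNz}$ are subfamilies of the orthonormal system $(b_{p,q})_{p,q\in\bNz}$ with disjoint second indices $m-1\ne m'-1$, so every basis vector of one space is orthogonal to every basis vector of the other; orthogonality then extends to the closed linear spans by continuity of the inner product.

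Second, I would invoke the standard Hilbert-space fact that if an orthonormal basis of a Hilbert space is partitioned into subfamilies, and each closed subspace $M_i$ is taken to be the closed span of the $i$-th subfamily, then the $M_i$ are pairwise orthogonal and their orthogonal sum is the whole space. Applying this to the partition of $(b_{p,q})_{p,q\in\bNz}$ into columns, and identifying the closed span of column $q$ with $\cF_{(q+1)}$ via Corollary~\ref{cor:basis_in_true_polyanalytic}, gives $L^2(\bC,\Gaussian)=\bigoplus_{q\in\bNz}\cF_{(q+1)}=\bigoplus_{m\in\bN}\cF_{(m)}$.

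There is essentially no obstacle here; the content has been front-loaded into Proposition~\ref{prop:basis_in_L2} and Corollary~\ref{cor:basis_in_true_polyanalytic}. The only point that deserves a word of care is the reindexing $m=q+1$ together with the verification that the column bases exhaust the full basis with no omissions or repetitions, that is, $\{(p,q)\colon p,q\in\bNz\}=\bigsqcup_{m\in\bN}\{(p,m-1)\colon p\in\bNz\}$; once this disjoint-union identity is noted, the orthogonal-sum formula is immediate.
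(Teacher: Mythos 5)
Your proposal is correct and follows exactly the route the paper intends: the paper states this corollary as an immediate consequence of Proposition~\ref{prop:basis_in_L2} and Corollary~\ref{cor:basis_in_true_polyanalytic}, i.e.\ partitioning the orthonormal basis $(b_{p,q})_{p,q\in\bNz}$ by columns, which is precisely your argument. You have simply written out the details the paper leaves implicit.
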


For every $f$ in $\cF_{(n)}$, define $A^\dagger_n f$ by
\[
(A^\dagger_n f)(z)
= \frac{1}{\sqrt{n}} (A^\dagger f)(z)
=\frac{1}{\sqrt{n}} \left(\overline{z} - \frac{\partial}{\partial z}\right) f(z).
\]
Definition \eqref{eq:b_Shigekawa} of the family
$(b_{p,q})_{p,q\in\bNz}$ implies that
\begin{equation}\label{eq:An_b}
A^\dagger_n b_{p,n-1}=b_{p,n}.
\end{equation}
The next picture shows the action of $A^\dagger_2$ on basic elements:
\[
\begin{array}{ccccc}
b_{0,0} & b_{0,1} \mapsto b_{0,2} & b_{0,3} & \ldots \\
b_{1,0} & b_{1,1} \mapsto b_{1,2} & b_{1,3} & \ldots \\
b_{2,0} & b_{2,1} \mapsto b_{2,2} & b_{2,3} & \ldots \\
b_{3,0} & b_{3,1} \mapsto b_{3,2} & b_{3,3} & \ldots \\
\vdots & \vdots\ \ \phantom{\mapsto}\ \ \vdots & \vdots & \ddots
\end{array}
\]

\begin{prop}
$A^\dagger_n$ is an isometric isomorphism
from $\cF_{(n)}$ onto $\cF_{(n+1)}$.
\end{prop}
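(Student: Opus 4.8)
The plan is to exhibit $A^\dagger_n$ as the unitary that carries the orthonormal basis $(b_{p,n-1})_{p\in\bNz}$ of $\cF_{(n)}$ onto the orthonormal basis $(b_{p,n})_{p\in\bNz}$ of $\cF_{(n+1)}$. By Corollary~\ref{cor:basis_in_true_polyanalytic}, these two families are orthonormal bases of $\cF_{(n)}$ and $\cF_{(n+1)}$, respectively, so there is a unique bounded linear operator $U\colon\cF_{(n)}\to\cF_{(n+1)}$ with $U b_{p,n-1}=b_{p,n}$ for all $p$; since $U$ maps an orthonormal basis bijectively onto an orthonormal basis, it is automatically an isometric isomorphism. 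The whole proof then reduces to checking that the differential operator $A^\dagger_n$ defined above coincides with $U$ on all of $\cF_{(n)}$.

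Next I would verify the identity $A^\dagger_n=U$ first on the dense subspace of finite linear combinations of the $b_{p,n-1}$. On such a combination $f=\sum_{p\le N}c_p\,b_{p,n-1}$, linearity of the differential operator $A^\dagger_n$ together with the relation~\eqref{eq:An_b}, $A^\dagger_n b_{p,n-1}=b_{p,n}$, gives $A^\dagger_n f=\sum_{p\le N}c_p\,b_{p,n}=U f$. This settles the equality on a dense subset and also re-confirms, via $\|U f\|^2=\sum_{p\le N}|c_p|^2=\|f\|^2$, that $U$ is isometric there.

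Finally I would pass to an arbitrary $f\in\cF_{(n)}$ by approximation. Writing $f_N\eqdef\sum_{p=0}^N\langle f,b_{p,n-1}\rangle b_{p,n-1}$, the expansion~\eqref{eq:f_true_polyanalytic_decomposition} shows $f_N\to f$ both in norm and uniformly on compact sets. On one side, $U f_N=A^\dagger_n f_N\to U f$ in $L^2(\bC,\gamma)$, and since $\cF_{(n+1)}$ is a RKHS this convergence is also pointwise. On the other side, $A^\dagger_n f_N=\frac{1}{\sqrt{n}}\bigl(\conjz\,f_N-\frac{\partial}{\partial z}f_N\bigr)$ must be shown to converge pointwise to $A^\dagger_n f$. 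The hard part will be the convergence of the derivatives $\frac{\partial}{\partial z}f_N\to\frac{\partial}{\partial z}f$ uniformly on compacts: the multiplication term is immediate from $f_N\to f$, but the derivative term requires a Cauchy/Weierstrass-type estimate for polyanalytic functions. I would obtain it by decomposing each polyanalytic function into its holomorphic components $f=\sum_{k=0}^{n-1}\conjz^{\,k}\varphi_k$ (as in~\cite[Section~1.1]{Balk1991}), noting that uniform-on-compacts convergence of $f_N$ forces the same for the components $\varphi_k$, and then applying the classical theorem on termwise differentiation of locally uniformly convergent sequences of holomorphic functions. Combining the two computations of $\lim_N A^\dagger_n f_N$ yields $A^\dagger_n f=U f$ pointwise, so $A^\dagger_n=U$ is the desired isometric isomorphism.
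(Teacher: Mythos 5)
Your proof is correct and follows essentially the same route as the paper's: expand $f$ in the orthonormal basis $(b_{p,n-1})_{p\in\bNz}$, apply $A^\dagger_n$ term by term using \eqref{eq:An_b}, and conclude that $A^\dagger_n$ carries this basis onto the basis $(b_{p,n})_{p\in\bNz}$ of $\cF_{(n+1)}$, hence is an isometric isomorphism. The only difference is in the justification of the key step of termwise differentiation of the locally uniformly convergent series: the paper simply cites \cite[Corollary~1.9]{Balk1991}, whereas you sketch a self-contained argument via the decomposition into holomorphic components and the classical theorem on termwise differentiation of locally uniformly convergent holomorphic sequences.
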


\begin{proof}
Vasilevski~\cite{Vasilevski2000} proved this fact
by using the Fourier transform.
Here we give another proof.
Write $f$ as in \eqref{eq:f_true_polyanalytic_decomposition}.
It is known \cite[Corollary~1.9]{Balk1991}
that the derivative $\frac{\partial}{\partial\conjz}$
can be applied to the each term of the series.
Therefore
\[
A^\dagger_n f
= \sum_{j=0}^\infty \langle f,b_{j,n-1}\rangle\,A^{\dagger}_n b_{j,n-1}
= \sum_{j=0}^\infty \langle f,b_{j,n-1}\rangle\,b_{j,n},
\]
and $\|A^\dagger_n f\|=\|f\|$.
Also, using the decomposition into series,
we see that $A^\dagger_n$ is surjective.
\end{proof}

Now we are going to prove explicit formulas~\eqref{eq:repr_kernel_true_Fn}
and~\eqref{eq:Kn_formula} for the reproducing kernels
of $\cF_{(n)}$ and $\cF_n$, respectively.
These formulas were published by Balk~\cite[Section~6.3]{Balk1991},
without using the terminology of Laguerre polynomials,
and by Askour, Intissar, and Mouayn \cite{AskourIntissarMouayn1997},
though they defined the space $\cF_{(n)}$ in a different
(but equivalent) way.
Our proof uses the operators $A^\dagger_n$
and thereby continues the work of Vasilevski~\cite{Vasilevski2000}.

\begin{lem}\label{lem: basis of H is square summable}
Let $H$ be a RKHS and
$(e_j)_{j=0}^\infty$ be an ortonormal sequence in $H$.
Then the series $\sum_{j=0}^{\infty}|e_j(z)|^2$ converges.
\end{lem}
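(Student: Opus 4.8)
The plan is to reduce the statement to Bessel's inequality by exploiting the defining property of a RKHS. Fix a point $z$ in the domain of the functions of $H$. Since $H$ is a reproducing kernel Hilbert space, the evaluation functional $f\mapsto f(z)$ is bounded on $H$, so by the Riesz representation theorem there is a reproducing kernel $K_z\in H$ with $f(z)=\langle f,K_z\rangle$ for every $f$ in $H$. In particular $e_j(z)=\langle e_j,K_z\rangle$, hence $|e_j(z)|=|\langle K_z,e_j\rangle|$ for each $j$.

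Next I would apply Bessel's inequality to the orthonormal sequence $(e_j)_{j=0}^\infty$ together with the single vector $K_z$:
\[
\sum_{j=0}^\infty |e_j(z)|^2
=\sum_{j=0}^\infty |\langle K_z,e_j\rangle|^2
\le \|K_z\|^2<\infty .
\]
Since the terms are nonnegative, this already shows that the series converges, and it even yields the explicit bound $\sum_{j=0}^\infty|e_j(z)|^2\le\|K_z\|^2=K_z(z)$.

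There is no genuine obstacle here: the heart of the argument is the observation that the values $e_j(z)$ are exactly the Fourier coefficients of the fixed vector $K_z$ with respect to the orthonormal system $(e_j)_{j=0}^\infty$, after which convergence is immediate. The only place where the hypothesis is actually used is the existence of $K_z$, which is precisely what the RKHS assumption guarantees; note that Bessel's inequality needs only orthonormality of $(e_j)_{j=0}^\infty$, so no totality or basis property of the sequence is required.
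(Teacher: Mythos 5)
Your proof is correct and is essentially identical to the paper's: both identify $e_j(z)=\langle e_j,K_z\rangle$ via the reproducing property and then apply Bessel's inequality to the vector $K_z$, obtaining the same bound $\sum_{j}|e_j(z)|^2\le\|K_z\|^2=K_z(z)$.
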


\begin{proof}
Denote by $K_{H,z}$ the reproducing kernel of $H$.
From the reproducing property and Bessel's inequality,
\[
\sum_{j=0}^{\infty}|e_j(z)|^2
=\sum_{j=0}^{\infty}|\langle K_{H,z},e_j\rangle|^2
\leq \|K_{H,z}\|^2
=K_{H,z}(z)<\infty.
\qedhere
\]
\end{proof}

\begin{lem}\label{lem:repr_kernel_true_Fn_recursive}
For every $n$ in $\bNz$ and every $z,w$ in $\bC$,
\begin{equation}\label{eq: recurrence reproductive kernel}
K_{(n+1),z}(w) =
\frac{1}{n} \left(z - \frac{\partial}{\partial \overline{z}}\right)
\left(\overline{w} - \frac{\partial}{\partial w}\right)
K_{(n),z}(w).
\end{equation}
\end{lem}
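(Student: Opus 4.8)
The plan is to expand both reproducing kernels in the orthonormal bases furnished by Corollary~\ref{cor:basis_in_true_polyanalytic} and then to check that the two first-order operators on the right-hand side carry the expansion of $K_{(n),z}$ term by term onto that of $K_{(n+1),z}$. Throughout I take $n\ge 1$, which is forced by the factor $1/n$ and is exactly the range in which $A^\dagger_n$ is defined. Since $(b_{j,n-1})_{j=0}^\infty$ is an orthonormal basis of the RKHS $\cF_{(n)}$, the reproducing property yields
\[
K_{(n),z}(w)=\sum_{j=0}^\infty \overline{b_{j,n-1}(z)}\,b_{j,n-1}(w),
\]
the series converging by Lemma~\ref{lem: basis of H is square summable}; likewise $K_{(n+1),z}(w)=\sum_{j}\overline{b_{j,n}(z)}\,b_{j,n}(w)$. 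The goal is to turn the first expansion into the second.

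Next I would record the two pointwise identities driving the computation. The operator $\bigl(\conjw-\tfrac{\partial}{\partial w}\bigr)$ is $A^\dagger$ acting in the variable $w$, and $\bigl(z-\tfrac{\partial}{\partial\conjz}\bigr)$ is $\overline{A}^\dagger$ acting in $z$. From definition~\eqref{eq:b_Shigekawa} and the commutativity of $A^\dagger$ and $\overline{A}^\dagger$ on polynomials one reads off $A^\dagger b_{p,q}=\sqrt{q+1}\,b_{p,q+1}$ and $\overline{A}^\dagger b_{p,q}=\sqrt{p+1}\,b_{p+1,q}$; in particular $A^\dagger b_{j,n-1}=\sqrt{n}\,b_{j,n}$ (this is just~\eqref{eq:An_b}) and $\overline{A}^\dagger b_{n-1,j}=\sqrt{n}\,b_{n,j}$. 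Using also the conjugation symmetry $\overline{b_{p,q}(z)}=b_{q,p}(z)$, immediate from~\eqref{eq:b_Laguerre}, the formal effect on a single term is
\[
\Bigl(z-\tfrac{\partial}{\partial\conjz}\Bigr)\Bigl(\conjw-\tfrac{\partial}{\partial w}\Bigr)\overline{b_{j,n-1}(z)}\,b_{j,n-1}(w)=n\,\overline{b_{j,n}(z)}\,b_{j,n}(w).
\]
Summing over $j$ and dividing by $n$ would then produce $K_{(n+1),z}(w)$ exactly, which is the assertion.

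The one point that requires care — and the main obstacle — is moving the two differential operators inside the infinite series, i.e. justifying termwise differentiation. I would dispose of the $w$-operator first: for fixed $z$ the function $w\mapsto K_{(n),z}(w)$ is the norm-convergent expansion $\sum_j c_j b_{j,n-1}$ in $\cF_{(n)}$ with coefficients $c_j=\overline{b_{j,n-1}(z)}$, which are square-summable by Lemma~\ref{lem: basis of H is square summable}. The proof that $A^\dagger_n$ is an isometric isomorphism already establishes that $A^\dagger_n=\tfrac{1}{\sqrt n}A^\dagger$ may be applied term by term to such a series, so
\[
\Bigl(\conjw-\tfrac{\partial}{\partial w}\Bigr)K_{(n),z}(w)=\sqrt{n}\sum_{j=0}^\infty \overline{b_{j,n-1}(z)}\,b_{j,n}(w).
\]

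For the remaining $z$-operator I would apply the same principle after conjugating. For fixed $w$ the map $z\mapsto\sum_j b_{j,n-1}(z)\,\overline{b_{j,n}(w)}$ is again a norm-convergent expansion in $\cF_{(n)}$, so $A^\dagger=\bigl(\conjz-\tfrac{\partial}{\partial z}\bigr)$ acts on it termwise; taking complex conjugates and invoking the elementary Wirtinger identity $\overline{(\conjz-\tfrac{\partial}{\partial z})\overline{F}}=(z-\tfrac{\partial}{\partial\conjz})F$ converts this into the termwise action of $\bigl(z-\tfrac{\partial}{\partial\conjz}\bigr)$ on the series displayed above. Combining the two steps gives
\[
\tfrac{1}{n}\Bigl(z-\tfrac{\partial}{\partial\conjz}\Bigr)\Bigl(\conjw-\tfrac{\partial}{\partial w}\Bigr)K_{(n),z}(w)=K_{(n+1),z}(w),
\]
as claimed. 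I expect the only delicate part to be the bookkeeping of the conjugation in the last step; everything else reduces to the single-term identities and the already-proved termwise action of $A^\dagger_n$.
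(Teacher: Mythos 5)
Your proposal is correct and follows essentially the same route as the paper: expand the kernel in the orthonormal basis $(b_{p,n-1})_p$, use the single-term creation-operator identities together with the conjugation symmetry $\overline{b_{p,q}(z)}=b_{q,p}(z)$, and justify termwise application of the two first-order operators via the $\ell^2$-summability of the pointwise basis values (Lemma~\ref{lem: basis of H is square summable}) and the boundedness of $A^\dagger_n$ on $\cF_{(n)}$. The only difference is cosmetic — you push the operators forward from $K_{(n),z}$ while the paper factors them out of $K_{(n+1),z}$ — and your remark that the statement implicitly requires $n\ge1$ is a fair observation.
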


\begin{proof}
It is well known that the reproducing kernel of a RKHS $H$
with an orthonormal basis $(e_j)_{j\in\bNz}$
can be derived from the series
\begin{equation}\label{eq: reproducing kernel as series of base}
K_{H,z}(w)=\sum_{j=0}^\infty\overline{e_j(z)}e_j(w).
\end{equation}
In our case, we use the orthonormal basis
$(b_{p,n})_{p\in\bNz}$ of the space $\cF_{(n+1)}$.
For a fixed $z$ in $\bC$, put $\alpha_p=\overline{b_{p,n}(z)}$.
So,
\[
K_{(n+1),z}
=\sum_{p=0}^\infty \overline{b_{p,n}(z)} b_{p,n}
=\sum_{p=0}^\infty \alpha_p b_{p,n}
=\sum_{p=0}^\infty \alpha_p A_{n-1}^\dagger b_{p,n-1}.
\]
From Lemma~\ref{lem: basis of H is square summable} we know that 
$(\alpha_p)_{p=0}^\infty\in\ell^2$,
thus the series
$\sum_{p=0}^\infty \alpha_p b_{p,n-1}$
converges in $\cF_{(n)}$.
Since $A_{n-1}^\dagger$ is a bounded operator in $\cF_{(n)}$,
we can interchange it with the sum operator.
Therefore
\[
K_{(n+1),z}(z)=\frac{1}{\sqrt{n}}
\left(\conjw-\frac {\partial}{\partial w}\right)
\sum_{p=0}^\infty \overline{b_{p,n}(z)} b_{p,n-1}(w).
\]
Now we fix $w$ in $\bC$,
write $b_{p,n}$ as $A_{n-1}^\dagger b_{p,n-1}$,
and use the fact that $\sum_{p=0}^\infty|b_{p,n-1}(w)|^2<+\infty$.
Following the same ideas as above,
but swapping the roles of $z$ and $w$,
we factorize $\left(\conjw-\frac{\partial}{\partial w}\right)$
from the series:
\[
K_{(n+1),z}(w)
=\frac{1}{n}
\left(z-\frac{\partial}{\partial \overline{z}}\right)
\left(\overline{w}-\frac{\partial}{\partial w}\right)
\sum_{p=0}^\infty \overline{b_{p,n-1}(z)}b_{p,n-1}(w).
\]
The last sum equals $K_{(n),z}(w)$,
which yields~\eqref{eq: recurrence reproductive kernel}.
\end{proof}

\begin{cor}
For every $n$ in $\bNz$ and every $z,w$ in $\bC$,
\begin{equation}\label{eq:repr_ker_creation_operators}
K_{(n),z}(w)=\frac{1}{(n-1)!}
\left(z-\frac{\partial}{\partial\conjz}\right)^{n-1}
\left(\conjw-\frac{\partial}{\partial w}\right)^{n-1}
e^{\conjz w}.
\end{equation}
\end{cor}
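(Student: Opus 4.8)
The plan is to prove \eqref{eq:repr_ker_creation_operators} by induction on $n$, using the recurrence of Lemma~\ref{lem:repr_kernel_true_Fn_recursive} as the inductive engine. For the base case $n=1$, I would recall that $\cF_{(1)}$ is the analytic Fock space, whose orthonormal basis is $(b_{p,0})_{p\in\bNz}$ with $b_{p,0}(z)=z^p/\sqrt{p!}$ (this follows from \eqref{eq:b_Laguerre}, since $L_0^{(p)}\equiv1$). Feeding this into the series representation \eqref{eq: reproducing kernel as series of base} gives
\[
K_{(1),z}(w)=\sum_{p=0}^\infty\frac{\conjz^{\,p}}{\sqrt{p!}}\,\frac{w^p}{\sqrt{p!}}
=\sum_{p=0}^\infty\frac{(\conjz w)^p}{p!}=\econstant^{\conjz w},
\]
which coincides with the right-hand side of \eqref{eq:repr_ker_creation_operators} for $n=1$, where both operator powers are empty and the prefactor is $1/0!=1$.

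For the inductive step, I would assume \eqref{eq:repr_ker_creation_operators} for a given $n$ and substitute it into the recurrence of Lemma~\ref{lem:repr_kernel_true_Fn_recursive}:
\[
K_{(n+1),z}(w)
=\frac{1}{n}\left(z-\frac{\partial}{\partial\conjz}\right)
\left(\conjw-\frac{\partial}{\partial w}\right)
\cdot\frac{1}{(n-1)!}
\left(z-\frac{\partial}{\partial\conjz}\right)^{n-1}
\left(\conjw-\frac{\partial}{\partial w}\right)^{n-1}
\econstant^{\conjz w}.
\]
The crucial observation is that the operator $z-\partial/\partial\conjz$ depends only on the variables $z,\conjz$, whereas $\conjw-\partial/\partial w$ depends only on $w,\conjw$; since these involve disjoint sets of variables, the two operator families commute. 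Collecting the single factors together with the $(n-1)$-fold powers then yields the $n$-fold powers, and multiplying the prefactors gives $\frac{1}{n}\cdot\frac{1}{(n-1)!}=\frac{1}{n!}$. This is exactly \eqref{eq:repr_ker_creation_operators} with $n$ replaced by $n+1$, which closes the induction.

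I do not anticipate a genuine obstacle here: the whole argument reduces to the base-case computation of the analytic Fock kernel and to the commutativity of differential operators acting on separate variables. The only points requiring a little care are the factorial bookkeeping in the prefactor and the identification $b_{p,0}(z)=z^p/\sqrt{p!}$ used in the base case.
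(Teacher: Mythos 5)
Your proposal is correct and is exactly the argument the paper intends: the corollary is obtained by iterating the recurrence of Lemma~\ref{lem:repr_kernel_true_Fn_recursive} starting from the analytic Fock kernel $K_{(1),z}(w)=\econstant^{\conjz w}$, with the commutativity of the operators in the $z$- and $w$-variables handling the regrouping of powers. The base-case computation and the factorial bookkeeping are both right, so nothing further is needed.
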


\begin{prop}\label{prop:repr_kernel_true_Fn}
The reproducing kernel of $\cF_{(n)}$ is given by
\begin{equation}\label{eq:repr_kernel_true_Fn}
K_{(n),z}(w)=e^{\overline{z}w} L_{n-1}(|w-z|^2).
\end{equation}
\end{prop}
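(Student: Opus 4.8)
The plan is to evaluate the closed form for $K_{(n),z}(w)$ supplied by \eqref{eq:repr_ker_creation_operators} and to recognise the result as $\econstant^{\conjz w}L_{n-1}(|w-z|^2)$. The main tool is the Rodrigues-type presentation of the creation operators already recorded in the definitions of $A^\dagger$ and $\overline{A}^\dagger$: acting in the variable $w$ one has $\left(\conjw-\frac{\partial}{\partial w}\right)g=-\econstant^{w\conjw}\frac{\partial}{\partial w}\bigl(\econstant^{-w\conjw}g\bigr)$, and similarly $\left(z-\frac{\partial}{\partial\conjz}\right)$ equals $-\econstant^{z\conjz}\frac{\partial}{\partial\conjz}\bigl(\econstant^{-z\conjz}\,\cdot\,\bigr)$ in the variable $\conjz$. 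Iterating, the $(n-1)$-th powers acquire the shape $(-1)^{n-1}\econstant^{\cdots}\partial^{n-1}\bigl(\econstant^{-\cdots}\,\cdot\,\bigr)$, which is precisely the pattern governed by Lemma~\ref{lem:derivative_from_Rodriguez_xy}. Since the two operators act on disjoint pairs of variables they commute, so I may apply them one at a time.

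First I would apply the inner operator $\left(\conjw-\frac{\partial}{\partial w}\right)^{n-1}$ to $\econstant^{\conjz w}$, holding $z$, $\conjz$, $\conjw$ fixed. Because $\econstant^{-w\conjw}\econstant^{\conjz w}=\econstant^{(\conjz-\conjw)w}$ and each $\partial/\partial w$ merely pulls down a factor $\conjz-\conjw$, this step collapses to $(\conjw-\conjz)^{n-1}\econstant^{\conjz w}$, the Gaussian factor $\econstant^{w\conjw}$ cancelling exactly.

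Next I would apply the outer operator $\frac{1}{(n-1)!}\left(z-\frac{\partial}{\partial\conjz}\right)^{n-1}$, now holding $w$, $\conjw$, $z$ fixed. Its Rodrigues form leaves me with $\frac{\partial^{n-1}}{\partial\conjz^{n-1}}$ acting on $(\conjw-\conjz)^{n-1}\econstant^{\conjz(w-z)}$, up to the prefactor $(-1)^{n-1}\econstant^{z\conjz}$. The decisive manoeuvre is the substitution $b=\conjw-\conjz$, under which $\partial/\partial\conjz=-\partial/\partial b$ and the expression becomes, up to a constant exponential in $b$, a multiple of $\frac{\partial^{n-1}}{\partial b^{n-1}}\bigl(b^{n-1}\econstant^{-b(w-z)}\bigr)$. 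The $\alpha=0$ instance of Lemma~\ref{lem:derivative_from_Rodriguez_xy}, with $x=b$ and $y=w-z$, then turns this into $(n-1)!\,\econstant^{-b(w-z)}L_{n-1}(b(w-z))$; here I would use the key identity $b(w-z)=(\conjw-\conjz)(w-z)=|w-z|^2$ to replace the Laguerre argument by $|w-z|^2$.

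The only remaining task is exponential bookkeeping. Collecting the three exponential contributions — $\econstant^{z\conjz}$ from the outer Rodrigues formula, the factor $\econstant^{\conjw(w-z)}$ produced by the shift $b=\conjw-\conjz$, and $\econstant^{-|w-z|^2}$ from the Laguerre step — and expanding $|w-z|^2=w\conjw-w\conjz-z\conjw+z\conjz$, I expect all terms to cancel except $\econstant^{\conjz w}$, while the two signs $(-1)^{n-1}$ combine to $+1$ and the factor $(n-1)!$ is absorbed by the normalisation $1/(n-1)!$. The main obstacle is exactly this tracking of exponential factors together with keeping straight which variables are held constant at each differentiation; once the substitution $b=\conjw-\conjz$ is made and Lemma~\ref{lem:derivative_from_Rodriguez_xy} is invoked, the identification with $\econstant^{\conjz w}L_{n-1}(|w-z|^2)$ is forced.
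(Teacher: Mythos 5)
Your proposal is correct and follows essentially the same route as the paper: starting from \eqref{eq:repr_ker_creation_operators}, applying the inner creation operator in $w$ to produce the factor $(\conjw-\conjz)^{n-1}\econstant^{\conjz w}$, and then recognizing the outer operator as the Rodrigues pattern of Lemma~\ref{lem:derivative_from_Rodriguez_xy} in the variable $\conjz-\conjw$ (the paper differentiates with respect to $\conjz-\conjw$ directly where you substitute $b=\conjw-\conjz$, a purely cosmetic difference). The exponential bookkeeping and sign cancellations you describe all check out.
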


\begin{proof}
Using the definition of creation operators,
formula \eqref{eq:repr_ker_creation_operators} and identity \eqref{eq:derivative_from_Rodriguez_xy} for Laguerre polynomials we have
\begin{align*}
K_{(n),z}(w)
&=
\frac{e^{z\conjz}}{(n-1)!}
\frac{\partial^{n-1}}{\partial\, \conjz^{n-1}}
\left(
e^{-z\conjz}
e^{w\conjw}\frac{\partial^{n-1}}{\partial w^{n-1}}(e^{-w\conjw}
e^{w\conjz})
\right)\\
&=
\frac{e^{z\conjz}}{(n-1)!}
\frac{\partial^{n-1}}{\partial\, \conjz^{n-1}}\left(e^{-\conjz(z-w)}(\conjz-\conjw)^{n-1}\right)\\
&=e^{\conjz w}
\frac{e^{(z-w)(\conjz-w)}}{(n-1)!}
\frac{\partial^{n-1}}{\partial (\conjz-\conjw)^{n-1}}\left(e^{-(\conjz-\conjw)(z-w)}(\conjz-\conjw)^{n-1}\right)\\
&=e^{\conjz w}L_{n-1}(|z-w|^2).\qedhere
\end{align*}
\end{proof}

\begin{cor}
The reproducing kernel of $\cF_n$ is
\begin{equation}\label{eq:Kn_formula}
K_{n,z}(w)=e^{\overline{z}w} L_{n-1}^{(1)}(|w-z|^2).
\end{equation}
\end{cor}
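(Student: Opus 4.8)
The plan is to write $K_{n,z}$ as a sum of the true-polyanalytic reproducing kernels already computed in Proposition~\ref{prop:repr_kernel_true_Fn}, and then to collapse the resulting sum of Laguerre polynomials by a combinatorial identity. The starting point is the orthogonal decomposition $\cF_n=\bigoplus_{m=1}^n\cF_{(m)}$: indeed, by Proposition~\ref{prop:basis_in_Fn} the orthonormal basis $(b_{p,q})_{p\in\bNz,\,q<n}$ of $\cF_n$ is, splitting according to the value of $q\in\{0,\dots,n-1\}$, the disjoint union of the orthonormal bases $(b_{p,q})_{p\in\bNz}$ of the spaces $\cF_{(q+1)}$ provided by Corollary~\ref{cor:basis_in_true_polyanalytic}.

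From here I would compute $K_{n,z}$ through the series \eqref{eq: reproducing kernel as series of base}. Grouping the terms by the value of $q=m-1$ gives
\[
K_{n,z}(w)
=\sum_{q=0}^{n-1}\sum_{p=0}^\infty\overline{b_{p,q}(z)}\,b_{p,q}(w)
=\sum_{m=1}^n K_{(m),z}(w),
\]
and substituting $K_{(m),z}(w)=e^{\overline z w}L_{m-1}(|w-z|^2)$ from Proposition~\ref{prop:repr_kernel_true_Fn} and factoring out $e^{\overline z w}$ reduces the claim to the identity
\[
\sum_{k=0}^{n-1}L_k^{(0)}(x)=L_{n-1}^{(1)}(x)
\]
evaluated at $x=|w-z|^2$ (recall that the unadorned $L_{m-1}$ is $L_{m-1}^{(0)}$).

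This last Laguerre identity is the only real obstacle, and I would establish it by comparing coefficients via the explicit expansion \eqref{eq:generalized_Laguerre_explicit}. The coefficient of $x^j/j!$ in $\sum_{k=0}^{n-1}L_k^{(0)}$ is $(-1)^j\sum_{k=j}^{n-1}\binom{k}{j}$, which by the hockey-stick identity equals $(-1)^j\binom{n}{j+1}$; since $\binom{n}{j+1}=\binom{n}{n-1-j}$, this matches the coefficient $(-1)^j\binom{n}{n-1-j}$ of $x^j/j!$ in $L_{n-1}^{(1)}$, also read off from \eqref{eq:generalized_Laguerre_explicit}. Setting $x=|w-z|^2$ then yields \eqref{eq:Kn_formula}.
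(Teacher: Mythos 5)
Your proposal is correct and follows essentially the same route as the paper: summing the true-polyanalytic kernels $K_{(m),z}$ over $m=1,\dots,n$ and invoking the identity $\sum_{k=0}^{n-1}L_k(x)=L_{n-1}^{(1)}(x)$ (which the paper simply cites, with a harmless off-by-one typo in its stated summation range, while you verify it via the hockey-stick identity). No substantive difference.
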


\begin{proof}
Use \eqref{eq:repr_kernel_true_Fn}
and the formula $L_m^{(1)}(x)=\sum_{k=0}^{m-1} L_k(x)$.
\end{proof}

\begin{cor}\label{cor:evaluation_Fn_upper_bound}
For every $f$ in $\cF_n$ and every $z$ in $\bC$,
\begin{equation}\label{eq:evaluation_Fn_upper_bound}
|f(z)|
\le \sqrt{n}\,\econstant^{\frac{|z|^2}{2}}\,\|f\|.
\end{equation}
The equality is achieved when $f=K_{n,z}$.
\end{cor}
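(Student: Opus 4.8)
The plan is to exploit that $\cF_n$ is a RKHS and to reduce the entire estimate to the norm of the reproducing kernel, whose value on the diagonal we already control through the explicit formula~\eqref{eq:Kn_formula}. First I would invoke the reproducing property: since $f\in\cF_n$, we have $f(z)=\langle f,K_{n,z}\rangle$. Applying the Cauchy--Schwarz inequality gives
\[
|f(z)|=|\langle f,K_{n,z}\rangle|\le\|f\|\,\|K_{n,z}\|,
\]
so the whole problem reduces to computing $\|K_{n,z}\|$.

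Next I would use the reproducing property a second time, now with the kernel itself in place of $f$, to obtain $\|K_{n,z}\|^2=\langle K_{n,z},K_{n,z}\rangle=K_{n,z}(z)$. The only genuine computation is therefore the evaluation of the kernel on the diagonal. Setting $w=z$ in~\eqref{eq:Kn_formula} gives $K_{n,z}(z)=\econstant^{|z|^2}L_{n-1}^{(1)}(0)$, and from the explicit expansion~\eqref{eq:generalized_Laguerre_explicit} only the term $k=0$ survives at the origin, so that $L_{n-1}^{(1)}(0)=\binom{n}{n-1}=n$. Hence $\|K_{n,z}\|=\sqrt{n}\,\econstant^{|z|^2/2}$, and substituting this into the Cauchy--Schwarz bound yields~\eqref{eq:evaluation_Fn_upper_bound}.

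Finally, for the equality statement I would simply test $f=K_{n,z}$: then the left-hand side equals $|K_{n,z}(z)|=\|K_{n,z}\|^2=n\,\econstant^{|z|^2}$, while the right-hand side equals $\sqrt{n}\,\econstant^{|z|^2/2}\,\|K_{n,z}\|=n\,\econstant^{|z|^2}$, so the two coincide (as expected, since Cauchy--Schwarz is an equality for parallel vectors). There is no real obstacle here; this is the standard RKHS pointwise bound, and the only point requiring any care is the Laguerre evaluation $L_{n-1}^{(1)}(0)=n$, which also makes transparent why the factor $\sqrt{n}$ appears in the constant.
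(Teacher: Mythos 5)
Your proposal is correct and follows essentially the same route as the paper: the Cauchy--Schwarz bound $|f(z)|\le\|f\|\,\|K_{n,z}\|$ combined with $\|K_{n,z}\|^2=K_{n,z}(z)=\econstant^{|z|^2}L_{n-1}^{(1)}(0)=n\,\econstant^{|z|^2}$, which is exactly the computation the paper records. Your version merely spells out the Cauchy--Schwarz step and the equality case, which the paper leaves implicit.
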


\begin{proof}
Indeed,
$\|K_{n,z}\|^2
=K_{n,z}(z)
=\econstant^{|z|^2} L_{n-1}^{(1)}(0)
=n\econstant^{|z|^2}$.
\end{proof}

We finish this section with a couple of simple results
about the Berezin transform and Toeplitz operators in $\cF_n$.
Given a RKHS $H$ over a domain $\Omega$
with a reproducing kernel $(K_z)_{z\in\Omega}$,
the corresponding Berezin transform $\Berezin_H$
acts from $\cB(H)$ to the space $B(\Omega)$ of bounded functions
by the rule
\[
\Berezin_H(S)(z)
=\frac{\langle SK_z,K_z\rangle_H}%
{\langle K_z,K_z\rangle_H}
=\frac{(SK_z)(z)}{K_z(z)}.
\]
Stroethoff proved~\cite{Stroethoff1997}
that $\Berezin_H$ is injective for various RKHS
of analytic functions, in particular, for $H=\cF_1$.
Engli\v{s} noticed \cite[Section~2]{Englis2006} that $\Berezin_H$
is not injective for various RKHS of harmonic functions.
The reasoning of Engli\v{s} can be applied
without any changes to $n$-analytic functions with $n\ge 2$.

\begin{prop}\label{prop:Berezin_not_injective}
Let $n\ge 2$. Then $\Berezin_{\cF_n}$ is not injective.
\end{prop}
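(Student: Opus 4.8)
The plan is to follow Engli\v{s}'s strategy and exhibit, completely explicitly, a single nonzero operator that is annihilated by the Berezin transform. The whole argument rests on one elementary observation about the basis: there are two \emph{distinct} elements with the \emph{same} pointwise modulus. Indeed, $b_{1,0}(z)=z$ and $b_{0,1}(z)=\conjz$, so that $|b_{1,0}(z)|=|b_{0,1}(z)|=|z|$ for every $z$ in $\bC$. Both functions belong to $\cF_n$ as soon as $n\ge2$, because $b_{1,0}$ has second index $0<n$ and $b_{0,1}$ has second index $1<n$; this is exactly the point where the hypothesis $n\ge2$ enters.

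Using these two functions, I would define the self-adjoint, rank-two operator
\[
S\eqdef \langle\,\cdot\,,b_{1,0}\rangle\,b_{1,0}-\langle\,\cdot\,,b_{0,1}\rangle\,b_{0,1},
\]
that is, the difference of the orthogonal projections onto $\lin\{b_{1,0}\}$ and $\lin\{b_{0,1}\}$. Since $b_{1,0}$ and $b_{0,1}$ are orthonormal (Proposition~\ref{prop:basis_in_Fn}), $S$ is a nonzero element of $\cB(\cF_n)$.

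It then remains to verify that $\Berezin_{\cF_n}(S)=0$. As $\Berezin_{\cF_n}(S)(z)=\langle SK_{n,z},K_{n,z}\rangle/K_{n,z}(z)$ with $K_{n,z}(z)>0$, it suffices to show that the numerator vanishes for every $z$. By the reproducing property one has $\langle K_{n,z},b_{p,q}\rangle=\overline{b_{p,q}(z)}$, hence
\[
\langle SK_{n,z},K_{n,z}\rangle
=\bigl|\langle K_{n,z},b_{1,0}\rangle\bigr|^2-\bigl|\langle K_{n,z},b_{0,1}\rangle\bigr|^2
=|b_{1,0}(z)|^2-|b_{0,1}(z)|^2
=|z|^2-|z|^2=0.
\]
Therefore $S$ lies in the kernel of $\Berezin_{\cF_n}$ while $S\neq0$, which shows that $\Berezin_{\cF_n}$ is not injective.

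Because the witness $S$ is produced by hand, there is no genuine obstacle: the only thing requiring verification is that $b_{1,0}$ and $b_{0,1}$ are two genuinely different basis functions of $\cF_n$ sharing the same modulus, and this is immediate from the explicit table of the $b_{p,q}$. The coincidence $|b_{1,0}|=|b_{0,1}|$ is precisely the polyanalytic counterpart of the phenomenon Engli\v{s} exploited in the harmonic setting, and it is unavailable in the purely analytic space $\cF_1$, consistent with Stroethoff's injectivity result there.
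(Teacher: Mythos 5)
Your proof is correct and follows essentially the same route as the paper: both arguments exhibit an explicit nonzero finite-rank operator annihilated by $\Berezin_{\cF_n}$, exploiting the fact that for $n\ge 2$ the space $\cF_n$ contains both $z$ and $\conjz$ (the Engli\v{s} phenomenon). The only difference is the choice of witness --- you use the self-adjoint difference of the projections onto $\lin\{b_{1,0}\}$ and $\lin\{b_{0,1}\}$, killed by the identity $|b_{1,0}(z)|=|b_{0,1}(z)|$, while the paper uses the antisymmetric operator $Sf=\langle f,\overline{u}\rangle v-\langle f,\overline{v}\rangle u$ with $u=1$, $v=z$ --- and both verifications are equally immediate.
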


\begin{proof}
Let $u$ and $v$ be some linearly independent elements of $\cF_n$
such that $\overline{f},\overline{g}\in\cF_n$.
For example, $u(z)=b_{0,0}(z)=1$
and $v(z)=b_{1,0}(z)=z$.
Following \cite[Section~2]{Englis2006},
consider $S\in\cB(\cF_n)$ given by
\begin{equation}\label{eq:Englis}
Sf \eqdef \langle f,\overline{u}\rangle v
- \langle f,\overline{v}\rangle u.
\end{equation}
With the help of the reproducing property we easily see
that the function $\Berezin_{\cF_n}(S)$ is the zero constant,
although the operator $S$ is not zero.
\end{proof}

Given a measure space $\Omega$ and
a function $g$ in $L^\infty(\Omega)$,
we denote by $M_g$ the multiplication operator
defined on $L^2(\Omega)$ by $M_g f\eqdef gf$.
If $H$ is a closed subspace of $L^2(\Omega)$,
then the \emph{Toeplitz operator} $T_{H,g}$ is defined on $H$ by
\[
T_{H,g}(f)\eqdef P_H(gf)=P_H M_g f.
\]
For $H=\cF_n$ and $H=\cF_{(n)}$,
we write just $T_{n,g}$ and $T_{(n),g}$, respectively.

\begin{prop}\label{prop:zero_Toeplitz_operator}
Let $g\in L^\infty(\bC)$ and $T_{n,g}=0$. Then $g=0$ a.e.
\end{prop}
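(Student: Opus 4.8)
The plan is to unfold the definition of $T_{n,g}$ and reduce the hypothesis to a single orthogonality statement in $L^2(\bC,\Gaussian)$. Since $\Gaussian$ is a probability measure and $g$ is bounded, $g\in L^2(\bC,\Gaussian)$, so every product $gf$ with $f\in\cF_n\subseteq L^2(\bC,\Gaussian)$ again lies in $L^2(\bC,\Gaussian)$. The condition $T_{n,g}=0$ means $P_n(gf)=0$ for all $f\in\cF_n$, which is equivalent to saying that $gf\perp\cF_n$, i.e.
\[
\langle gf,h\rangle=0\qquad(f,h\in\cF_n).
\]

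The key step is a judicious choice of $f$ and $h$. Since every analytic monomial $m_{a,0}(z)=z^a$ belongs to $\cF_1\subseteq\cF_n$ and lies in $L^2(\bC,\Gaussian)$, I would take $f=m_{a,0}$ and $h=m_{b,0}$ with $a,b\in\bNz$ arbitrary. A direct computation then gives
\[
0=\langle g\,m_{a,0},m_{b,0}\rangle
=\int_{\bC} g(z)\,z^a\,\overline{z}^{\,b}\,\dif\Gaussian(z)
=\langle g,m_{b,a}\rangle.
\]
As $a$ and $b$ range over $\bNz$, the monomial $m_{b,a}$ ranges over the whole family $(m_{p,q})_{p,q\in\bNz}$, so $g$ is orthogonal to every monomial.

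Finally, Proposition~\ref{prop:basis_in_L2} tells us that the family $(b_{p,q})_{p,q\in\bNz}$, obtained by orthogonalizing the monomials, is an orthonormal basis of $L^2(\bC,\Gaussian)$; in particular the closed linear span of the monomials $m_{p,q}$ is all of $L^2(\bC,\Gaussian)$. Hence an element orthogonal to every monomial must vanish, and $g=0$ a.e. There is no serious obstacle here: the only point worth noting is that one need not exploit the polyanalytic structure of $\cF_n$ at all, since the products $f\overline{h}$ formed from \emph{analytic} monomials alone already exhaust all monomials $z^a\overline{z}^{\,b}$; choosing $f,h$ in this way sidesteps any combinatorics with the complex Hermite polynomials.
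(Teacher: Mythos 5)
Your proposal is correct and is essentially the same argument as the paper's (which attributes it to Berger--Coburn): test $T_{n,g}$ against the analytic monomials $m_{a,0}, m_{b,0}\in\cF_1\subseteq\cF_n$ to deduce $\langle g,m_{b,a}\rangle=0$ for all $a,b$, then invoke density of the monomials in $L^2(\bC,\Gaussian)$. No gaps.
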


\begin{proof}
For $n=1$, this result was proven in \cite[Theorem~4]{BergerCoburn1986}.
Let us recall that proof which also works for $n\ge 2$.
The condition $T_{n,g}=0$ implies that for all $j,k$ in $\bNz$
\[
\langle g,m_{j,k}\rangle
=\int_\bC g(z)\,\conjz^j z^k\,\dif\Gaussian(z)
=\langle g m_{k,0},m_{j,0}\rangle
=\langle T_{n,g}m_{k,0},m_{j,0}\rangle
=0.
\]
Since $\{m_{j,k}\colon\ j,k\in\bNz\}$ is a dense subset of $L^2(\bC,\Gaussian)$, $g=0$ a.e.
\end{proof}

\section{Unitary representations defined by changes of variables}
\label{sec:unitary_representations}

This section states some simple general facts
about unitary group representations in RKHS,
defined by changes of variables.
Suppose that $(\Omega,\nu)$ is a measure space,
$H$ is a RKHS over $\Omega$,
with the inner product inherited from $L^2(\Omega)$,
$(K_z)_{z\in\Omega}$ is the reproducing kernel of $H$,
and $P_H\in\cB(L^2(\Omega))$ is the orthogonal projection
whose image is $H$:
\[
(P_H f)(z)=\langle f,K_z\rangle_{L^2(\Omega)}.
\]
Furthermore, let $G$ be a locally compact group,
and $\alpha$ be a group action in $\Omega$.
So, for every $\tau$ in $G$ we have a ``change of variables''
$\alpha(\tau)\colon\Omega\to\Omega$,
which satisfies
$\alpha(\tau_1\tau_2)=\alpha(\tau_1)\circ\alpha(\tau_2)$.
Suppose that the function $\rho$,
defined by the following rule,
is a strongly continuous unitary representation
of the group $G$ in the space $L^2(\Omega)$:
\[
\rho(\tau)f\eqdef f\circ\alpha(\tau^{-1})\qquad
(f\in L^2(\Omega),\ \tau\in G).
\]
In other words, we suppose that
$\rho(\tau) f\in L^2(\Omega)$,
$\|\rho(\tau) f\|_{L^2(\Omega)}=\|f\|_{L^2(\Omega)}$,
and $\rho(\tau) f$ depends continuously on $\tau$.

\begin{prop}\label{prop:criterion_invariant_subspace}
The following conditions are equivalent.
\begin{itemize}
\item[(a)] $\rho(\tau)(H)\subseteq H$ for every $\tau$ in $G$.
\item[(b)] $\rho(\tau)P_H = P_H\rho(\tau)$ for every $\tau$ in $G$.
\item[(c)] The reproducing kernel is
invariant under simultaneous changes of variables
in both arguments:
\[
K_{\alpha(\tau)(z)}(\alpha(\tau)(w))
=K_z(w)\qquad(\tau\in G,\ z,w\in\Omega).
\]
\item[(d)] $\rho(\tau) K_z = K_{\alpha(\tau)(z)}$
for every $z$ in $\Omega$ and every $\tau$ in $G$.
\end{itemize}
\end{prop}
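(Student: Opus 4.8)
The plan is to establish all four equivalences through the scheme
(a) $\Leftrightarrow$ (b), \; (a) $\Leftrightarrow$ (d), \; and (c) $\Leftrightarrow$ (d). Two facts will be used throughout. Since $\rho$ is a unitary representation, each $\rho(\tau)$ is unitary with $\rho(\tau)^{*}=\rho(\tau^{-1})$. And since $H$ is a RKHS, the linear span of $\{K_z\colon z\in\Omega\}$ is dense in $H$ (if $f\perp K_z$ for every $z$, then $f(z)=\langle f,K_z\rangle=0$ for every $z$, so $f=0$), while the values of an element of $H$ are recovered by the reproducing property $f(z)=\langle f,K_z\rangle$.

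First I would dispatch (a) $\Leftrightarrow$ (b). The implication (b) $\Rightarrow$ (a) is immediate, since for $f\in H$ we get $\rho(\tau)f=\rho(\tau)P_H f=P_H\rho(\tau)f\in H$. For (a) $\Rightarrow$ (b), I would apply (a) to both $\tau$ and $\tau^{-1}$: then for $v\in H^{\perp}$ and $h\in H$ we have $\langle\rho(\tau)v,h\rangle=\langle v,\rho(\tau^{-1})h\rangle=0$, because $\rho(\tau^{-1})h\in H$ by (a). Thus $\rho(\tau)$ preserves both $H$ and $H^{\perp}$, and any operator preserving this orthogonal decomposition commutes with the projection $P_H$.

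Next comes the bridge (a) $\Leftrightarrow$ (d), which is the heart of the matter. For (a) $\Rightarrow$ (d), fix $\tau$ and $z$ and test $\rho(\tau)K_z$ against an arbitrary $f\in H$:
\begin{equation*}
\langle f,\rho(\tau)K_z\rangle
=\langle\rho(\tau^{-1})f,K_z\rangle
=(\rho(\tau^{-1})f)(z)
=f(\alpha(\tau)(z))
=\langle f,K_{\alpha(\tau)(z)}\rangle ,
\end{equation*}
where the second equality is the reproducing property (valid since $\rho(\tau^{-1})f\in H$ by (a)) and the third is the definition of $\rho$. Hence $P_H(\rho(\tau)K_z)=K_{\alpha(\tau)(z)}$, and as $\rho(\tau)K_z\in H$ by (a), the two sides coincide, which is (d). Conversely, (d) says exactly that each $\rho(\tau)K_z$ lies in $H$; since these vectors span a dense subspace, $\rho(\tau)$ is bounded, and $H$ is closed, we get $\rho(\tau)(H)\subseteq H$, i.e.\ (a).

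Finally, for (c) $\Leftrightarrow$ (d) I would pass to inner products. Using $K_a(b)=\langle K_a,K_b\rangle$, condition (c) reads $\langle K_{\alpha(\tau)(z)},K_{\alpha(\tau)(w)}\rangle=\langle K_z,K_w\rangle$ for all $z,w$, i.e.\ invariance of the Gram matrix of the kernels under $\alpha(\tau)$. If (d) holds, this invariance is immediate from unitarity, $\langle K_{\alpha(\tau)(z)},K_{\alpha(\tau)(w)}\rangle=\langle\rho(\tau)K_z,\rho(\tau)K_w\rangle=\langle K_z,K_w\rangle$; and reading (d) pointwise through $(\rho(\tau)K_z)(w)=K_z(\alpha(\tau^{-1})(w))$ and substituting $w\mapsto\alpha(\tau)(w)$ (a bijection with inverse $\alpha(\tau^{-1})$) turns (d) into (c) and back. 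The step I expect to require the most care is precisely this reconciliation of the Hilbert-space identity $\rho(\tau)K_z=K_{\alpha(\tau)(z)}$ with the pointwise kernel identities in (c) and (d): rewriting $K_z(\alpha(\tau^{-1})(w))=\langle K_z,K_{\alpha(\tau^{-1})(w)}\rangle$ and $K_{\alpha(\tau)(z)}(w)=\langle K_{\alpha(\tau)(z)},K_w\rangle$ and equating them via $\rho(\tau)^{*}=\rho(\tau^{-1})$ channels everything through honest inner products evaluating the genuine kernel functions, so the apparent gap between almost-everywhere and everywhere equality disappears.
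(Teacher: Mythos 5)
Your proof is correct, and it rests on the same ingredients as the paper's (the reproducing property, unitarity of $\rho(\tau)$, and $\rho(\tau)^{*}=\rho(\tau^{-1})$), but the logical organization differs. The paper proves the cycle (a) $\Leftrightarrow$ (b), (a) $\Rightarrow$ (c) $\Rightarrow$ (d) $\Rightarrow$ (a), whereas you establish (a) $\Leftrightarrow$ (d) and (c) $\Leftrightarrow$ (d) each in both directions. Two of your steps genuinely diverge. For (d) $\Rightarrow$ (a) the paper verifies pointwise that $(\rho(\tau)f)(z)=\langle\rho(\tau)f,K_z\rangle$ for every $f\in H$, i.e.\ that $\rho(\tau)f$ coincides everywhere with its own projection onto $H$; you instead note that (d) places every $\rho(\tau)K_z$ in $H$ and invoke density of $\operatorname{span}\{K_z\colon z\in\Omega\}$ in $H$ together with boundedness of $\rho(\tau)$ and closedness of $H$ --- a less hands-on but arguably more transparent argument, at the cost of having to justify the density (which you do correctly). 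You also spell out (a) $\Rightarrow$ (b) via simultaneous invariance of $H$ and $H^{\perp}$, a step the paper dismisses as obvious. Your (a) $\Rightarrow$ (d) computation is essentially the paper's (a) $\Rightarrow$ (c) computation rearranged (testing $\rho(\tau)K_z$ against $f\in H$ rather than against another kernel), and your (c) $\Rightarrow$ (d) substitution $w\mapsto\alpha(\tau)(w)$ is exactly the paper's; your closing remarks about reconciling the pointwise and $L^2$ identities are sound, since the kernels are genuine everywhere-defined functions. Everything checks out.
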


\begin{proof}
Obviously, (a) is equivalent to (b).
Suppose (a) and prove (c):
\begin{align*}
K_{\alpha(\tau)(z)}(\alpha(\tau)(w))
&=(\rho(\tau^{-1}) K_{\alpha(\tau)(z)})(w)
=\langle \rho(\tau^{-1}) K_{\alpha(\tau)(z)},K_w\rangle_{L^2(\Omega)}
\\
&=\langle K_{\alpha(\tau)(z)},\rho(\tau) K_w\rangle_{L^2(\Omega)}
=\overline{(\rho(\tau) K_w)(\alpha(\tau)(z))}
=\overline{K_w(z)}=K_z(w).
\end{align*}
Suppose (c) and prove (d):
\[
(\rho(\tau) K_z)(w)
=K_z(\alpha(\tau^{-1})(w))
=K_{\alpha(\tau)(z)}(\alpha(\tau)(\alpha(\tau^{-1})(w)))
=K_{\alpha(\tau)(z)}(w).
\]
Suppose (d) and prove (a).
Let $f\in H$. Then
\begin{align*}
(\rho(\tau) f)(z)
&=f(\alpha(\tau^{-1})(z))
=\langle f,K_{\alpha(\tau^{-1})(z)}\rangle_{L^2(\Omega)}
\\
&=\langle \rho(\tau) f, \rho(\tau) K_{\alpha(\tau^{-1})(z)}\rangle_{L^2(\Omega)}
=\langle \rho(\tau) f, K_z\rangle_{L^2(\Omega)}.
\qedhere
\end{align*}
\end{proof}

Suppose that the conditions (a)--(d) of Proposition~\ref{prop:criterion_invariant_subspace} are fulfilled.
For every $\tau$ in $G$ we denote by $\rho_H(\tau)$
the compression of the operator $\rho(\tau)$
to the invariant subspace $H$.
Then $\rho_H$ is a unitary representation of $G$ in $H$. 
Let us relate this unitary representation
with the Berezin transform of operators.

\begin{prop}\label{prop:Berezin_and_group_action}
Let $S\in\cB(H)$ and $\tau\in G$.
Then
\begin{equation}\label{eq:Berezin_and_group_action}
\Berezin_H(\rho_H(\tau^{-1}) S \rho_H(\tau))(z)
=\Berezin_H(S)(\alpha(\tau)(z))\qquad(z\in\Omega).
\end{equation}
\end{prop}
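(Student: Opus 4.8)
The plan is to unfold the definition of the Berezin transform and push the group action through the inner product, using property (d) of Proposition~\ref{prop:criterion_invariant_subspace} to convert a reproducing kernel at the point $z$ into a reproducing kernel at the transformed point $\alpha(\tau)(z)$. First I would write, directly from the definition,
\[
\Berezin_H(\rho_H(\tau^{-1}) S \rho_H(\tau))(z)
=\frac{\langle \rho_H(\tau^{-1}) S \rho_H(\tau) K_z, K_z\rangle_H}{\langle K_z,K_z\rangle_H}.
\]
Because $\rho_H(\tau)$ is unitary and $\rho_H(\tau^{-1})=\rho_H(\tau)^{-1}=\rho_H(\tau)^*$, I would move $\rho_H(\tau^{-1})$ from the left-hand slot of the inner product to the right-hand slot, turning the numerator into $\langle S\rho_H(\tau) K_z, \rho_H(\tau) K_z\rangle_H$.

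The key step is then to recognize $\rho_H(\tau)K_z$. Since $\rho_H(\tau)$ is the compression of $\rho(\tau)$ to the invariant subspace $H$ and $K_z\in H$, we have $\rho_H(\tau)K_z=\rho(\tau)K_z$, and property (d) gives $\rho(\tau)K_z=K_{\alpha(\tau)(z)}$. Substituting this into both slots, the numerator becomes
\[
\langle S K_{\alpha(\tau)(z)}, K_{\alpha(\tau)(z)}\rangle_H,
\]
which is exactly the numerator of $\Berezin_H(S)$ evaluated at the point $\alpha(\tau)(z)$.

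Finally I would treat the denominator the same way: $\langle K_z,K_z\rangle_H=\langle \rho_H(\tau)K_z,\rho_H(\tau)K_z\rangle_H=\langle K_{\alpha(\tau)(z)},K_{\alpha(\tau)(z)}\rangle_H$ by unitarity and (d) again, so the normalizing factor also transforms correctly. Assembling the numerator and denominator yields precisely $\Berezin_H(S)(\alpha(\tau)(z))$, which is \eqref{eq:Berezin_and_group_action}. There is no real obstacle here; the only point requiring a moment's care is the bookkeeping of $\tau$ versus $\tau^{-1}$ — I must make sure that conjugating $S$ by $\rho_H(\tau^{-1})S\rho_H(\tau)$ produces an evaluation at $\alpha(\tau)(z)$ and not at $\alpha(\tau^{-1})(z)$, which is exactly why the inner conjugation uses $\rho_H(\tau)$ on the vector $K_z$ and thus lands on $K_{\alpha(\tau)(z)}$.
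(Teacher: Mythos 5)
Your proposal is correct and follows essentially the same route as the paper: the key step in both is property (d) of Proposition~\ref{prop:criterion_invariant_subspace}, $\rho_H(\tau)K_z=K_{\alpha(\tau)(z)}$, applied to the numerator and denominator of the Berezin transform. The only cosmetic difference is that you work with the inner-product form of $\Berezin_H$ and move $\rho_H(\tau^{-1})$ across as an adjoint, whereas the paper works with the pointwise-evaluation form $(SK_z)(z)/K_z(z)$ and uses the change of variables directly.
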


\begin{proof}
\begin{align*}
\Berezin_H(\rho_H(\tau^{-1}) S \rho_H(\tau))(z)
&=\frac{(\rho_H(\tau^{-1}) S \rho_H(\tau) K_z)(z)}{K_z(z)}
\\
&=\frac{(S K_{\alpha(\tau)(z)}(\alpha(\tau)(z))}%
{K_{\alpha(\tau)(z)}(\alpha(\tau)(z))}
=\Berezin_H(S)(\alpha(\tau)(z)).
\qedhere
\end{align*}
\end{proof}

\begin{cor}\label{cor:Berezin_of_invariant_operator}
Let $S\in\cB(H)$ such that $S\rho(\tau)=\rho(\tau)S$
for every $\tau$ in $G$.
Then the function $\Berezin_H(S)$ is invariant under $\alpha$,
i.e. $\Berezin_H(S)\circ\alpha(\tau)=\Berezin_H(S)$
for every $\tau$ in $G$.
\end{cor}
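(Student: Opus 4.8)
The plan is to obtain this corollary as an immediate consequence of Proposition~\ref{prop:Berezin_and_group_action}. The only substantive observation is that the intertwining hypothesis collapses the conjugated operator appearing on the left-hand side of \eqref{eq:Berezin_and_group_action} back to $S$ itself, after which the formula of that proposition does all the work.

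First I would read the commutation relation ``$S\rho(\tau)=\rho(\tau)S$'' as an identity of operators on the invariant subspace $H$: since $H$ is $\rho(\tau)$-invariant, the restriction of $\rho(\tau)$ to $H$ is exactly $\rho_H(\tau)$, so the hypothesis amounts to $S\rho_H(\tau)=\rho_H(\tau)S$ for every $\tau$ in $G$. Because $\rho_H$ is a unitary representation of $G$ in $H$, the product $\rho_H(\tau^{-1})\rho_H(\tau)=\rho_H(\tau^{-1}\tau)$ is the identity operator on $H$. Combining these two facts,
\[
\rho_H(\tau^{-1})\,S\,\rho_H(\tau)
=\rho_H(\tau^{-1})\,\rho_H(\tau)\,S
=S.
\]

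Next I would substitute this identity into \eqref{eq:Berezin_and_group_action}. The left-hand side then becomes $\Berezin_H(S)(z)$, while the right-hand side remains $\Berezin_H(S)(\alpha(\tau)(z))$. Since $\tau$ in $G$ and $z$ in $\Omega$ are arbitrary, this yields $\Berezin_H(S)=\Berezin_H(S)\circ\alpha(\tau)$ for every $\tau$, which is precisely the asserted invariance of $\Berezin_H(S)$ under $\alpha$.

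I do not expect any genuine obstacle here: the entire computational content is already carried by Proposition~\ref{prop:Berezin_and_group_action}, and what remains is only the bookkeeping of the group law. The single point requiring a little care is the interpretation of the hypothesis ``$S\rho(\tau)=\rho(\tau)S$'' — one should check that passing to the invariant subspace $H$ is legitimate and that it produces exactly $\rho_H(\tau)$, so that the conjugation $\rho_H(\tau^{-1})\,S\,\rho_H(\tau)$ occurring in the preceding proposition is indeed the correct object to which the commutation relation applies.
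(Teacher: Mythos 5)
Your proof is correct and is exactly the argument the paper intends (the corollary is stated without proof as an immediate consequence of Proposition~\ref{prop:Berezin_and_group_action}): the commutation hypothesis gives $\rho_H(\tau^{-1})S\rho_H(\tau)=S$, and substituting into \eqref{eq:Berezin_and_group_action} yields the invariance. Your remark about reading $S\rho(\tau)=\rho(\tau)S$ as a statement about the compressions $\rho_H(\tau)$ is the right reading of the hypothesis.
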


If $\Berezin_H$ is injective,
then the inverse of the Corollary~\ref{cor:Berezin_of_invariant_operator} is also true.

The rest of this section does not assume
that $H$ has a reproducing kernel;
it can be just a closed subspace of $L^2(\Omega)$.

We are going to state some elementary results
about the interaction of $\rho_H$ with Toeplitz operators.
These results are well known for many particular cases;
see \cite[Lemma~3.2 and Corollary~3.3]{DawsonOlafssonQuiroga2015}
for the case when $H$ is a Bergman space of analytic functions.

\begin{lem}\label{lem:mul_operator_and_group_representation}
Let $g\in L^\infty(\Omega)$ and $\tau\in G$.
Then
\[
M_g \rho(\tau) = \rho(\tau) M_{g\circ\alpha(\tau)}.
\]
\end{lem}

\begin{proof}
Put $u\eqdef g\circ\alpha(\tau)$.
Given $f$ in $L^2(\Omega)$,
\[
M_g \rho(\tau) f
= (u\circ\alpha(\tau^{-1}))\,(f\circ\alpha(\tau^{-1}))
= (uf)\circ\alpha(\tau^{-1})
= \rho(\tau) M_u f.
\qedhere
\]
\end{proof}

\begin{prop}\label{prop:Toeplitz_operator_and_group_representation}
Let $g\in L^\infty(\Omega)$ and $\tau\in G$.
Then
\begin{equation}\label{eq:Toeplitz_operator_and_group_representation}
T_g \rho_H(\tau) = \rho_H(\tau) T_{g\circ\alpha(\tau)}.
\end{equation}
\end{prop}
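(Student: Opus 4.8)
The plan is to chain together Lemma~\ref{lem:mul_operator_and_group_representation} with the commutation relation $\rho(\tau)P_H=P_H\rho(\tau)$ furnished by condition~(b) of Proposition~\ref{prop:criterion_invariant_subspace}, and then to recognize the compression $\rho_H(\tau)$ at the very end. Throughout, $T_g$ abbreviates $T_{H,g}=P_H M_g$ restricted to $H$, and $\rho_H(\tau)$ is the restriction of $\rho(\tau)$ to the $\rho$-invariant subspace $H$, which is legitimate precisely because conditions (a)--(d) of Proposition~\ref{prop:criterion_invariant_subspace} are assumed to hold.

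First I would fix $f\in H$ and unfold the left-hand side. Since $\rho_H(\tau)f=\rho(\tau)f$ for $f\in H$ and $T_g$ acts as $P_H M_g$, I obtain
\[
T_g\rho_H(\tau)f=P_H M_g\rho(\tau)f.
\]
Next I would apply Lemma~\ref{lem:mul_operator_and_group_representation} to rewrite $M_g\rho(\tau)=\rho(\tau)M_{g\circ\alpha(\tau)}$, which gives
\[
T_g\rho_H(\tau)f=P_H\rho(\tau)M_{g\circ\alpha(\tau)}f.
\]
Then I would invoke condition~(b) of Proposition~\ref{prop:criterion_invariant_subspace} to interchange $P_H$ and $\rho(\tau)$, arriving at
\[
T_g\rho_H(\tau)f=\rho(\tau)P_H M_{g\circ\alpha(\tau)}f.
\]

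Finally I would observe that $P_H M_{g\circ\alpha(\tau)}f=T_{g\circ\alpha(\tau)}f$ already lies in $H$, so that $\rho(\tau)$ applied to it coincides with its compression $\rho_H(\tau)$; this yields
\[
T_g\rho_H(\tau)f=\rho_H(\tau)T_{g\circ\alpha(\tau)}f,
\]
which is exactly \eqref{eq:Toeplitz_operator_and_group_representation}. The computation is short, so the point requiring attention is not a genuine obstacle but rather the careful distinction between the full operators $\rho(\tau)$, $P_H$, $M_g$ on $L^2(\Omega)$ and their compressions or restrictions to $H$: one must confirm that each intermediate vector stays inside $H$, so that replacing $\rho(\tau)$ by $\rho_H(\tau)$ (and $P_H M_{g\circ\alpha(\tau)}$ by $T_{g\circ\alpha(\tau)}$) is valid. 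Once this bookkeeping is in place, the asserted identity is simply read off from the chain of equalities.
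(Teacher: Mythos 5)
Your proof is correct and follows exactly the same route as the paper's: unfold $T_g\rho_H(\tau)f = P_H M_g\rho(\tau)f$, apply Lemma~\ref{lem:mul_operator_and_group_representation}, commute $P_H$ with $\rho(\tau)$, and recognize the compression. The extra bookkeeping you add about restrictions versus full operators is welcome but does not change the argument.
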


\begin{proof}
Use Lemma~\ref{lem:mul_operator_and_group_representation}
and the assumption $P_H \rho(\tau)=\rho(\tau) P_H$:
\[
T_g \rho_H(\tau) f
= P_H M_g \rho(\tau) f
= P_H \rho(\tau) M_{g\circ\al(\tau)} f
= \rho(\tau) P_H M_{g\circ\al(\tau)} f
= \rho_H(\tau) T_{g\circ\al(\tau)}f.
\qedhere
\]
\end{proof}

\begin{cor}\label{cor:Toeplitz_with_invariant_symbol_is_invariant_under_group_representation}
Let $g\in L^\infty(\Omega)$ such that
$g\circ\alpha(\tau)=g$ for every $\tau$ in $G$.
Then $T_g$ commutes with $\rho_H(\tau)$ for every $\tau$ in $G$.
\end{cor}

\begin{cor}\label{cor:Toeplitz_invariant_under_group_representation}
Suppose that the mapping $L^\infty(\Omega)\to\cB(H)$
defined by $a\mapsto T_a$ is injective.
Let $g\in L^\infty(X)$ such that
$T_g$ commutes with $\rho_H(\tau)$ for every $\tau$ in $G$.
Then for every $\tau$ in $G$
the functions $g\circ\alpha(\tau)$ and $g$ coincide a.e.
\end{cor}

\section{Von Neumann algebras of radial operators}
\label{sec:radial}

The methods of this section are similar to ideas from
\cite{Zorboska2003,GrudskyMaximenkoVasilevski2013,Quiroga2016}.
We start with two simple general schemes,
stated in the context of von Neumann algebras,
and then apply them to radial operators in $L^2(\Omega,\gamma)$,
in $\cF_n$, and in $\cF_{(n)}$.
Proposition~\ref{prop:diagonalizing_family_of_subspaces}
uses the concept of the (bounded) direct sum of von Neumann algebras
\cite[Definition~1.1.5]{Sakai1971}.

\begin{defn}\label{def:diagonalizing_family_of_subspaces}
Let $H$ be a Hilbert space,
$\cU$ be a self-adjoint subset of $\cB(H)$,
and $(W_j)_{j\in J}$ be a finite or countable family
of nonzero closed subspaces of $H$ such that
$H=\bigoplus_{j\in J}W_j$.
We say that this family \emph{diagonalizes} $\cU$ if
the following two conditions are satisfied.
\begin{enumerate}
\item For each $j$ in $J$ and each $U$ in $\cU$,
there exists $\la_{U,j}$ in $\bC$ such that
$W_j\subseteq\ker(\la_{U,j}I-U)$,
i.e. $U(v)=\la_{U,j}v$ for every $v$ in $W_j$.
\item For every $j$, $k$ in $J$ with $j\ne k$,
there exists $U$ in $\cU$ such that $\la_{U,j}\ne\la_{U,k}$.
\end{enumerate}
\end{defn}

\begin{prop}\label{prop:diagonalizing_family_of_subspaces}
Let $H$, $\cU$, and $(W_j)_{j\in J}$ be like in
Definition~\ref{def:diagonalizing_family_of_subspaces}.
Denote by $\cA$ the commutant of $\cU$.
Then
\begin{equation}\label{eq:conmutant_description}
\cA=\{S\in\cB(H)\colon\quad\forall j\in J\quad S(W_j)\subseteq W_j\},
\end{equation}
and $\cA$ is isometrically isomorphic to
$\bigoplus_{j\in J}\cB(W_j)$.
\end{prop}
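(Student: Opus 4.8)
The plan is to prove the two assertions of Proposition~\ref{prop:diagonalizing_family_of_subspaces} in turn: first the set-theoretic description \eqref{eq:conmutant_description} of the commutant $\cA$, and then the isometric isomorphism $\cA\cong\bigoplus_{j\in J}\cB(W_j)$. For the description of $\cA$, I would prove the two inclusions separately. For the inclusion $\supseteq$, suppose $S\in\cB(H)$ leaves every $W_j$ invariant; I would show $S$ commutes with each $U$ in $\cU$. The key observation is that condition~(1) of Definition~\ref{def:diagonalizing_family_of_subspaces} says each $U$ acts as the scalar $\la_{U,j}$ on $W_j$. Writing an arbitrary $v\in H$ as $v=\sum_j v_j$ with $v_j\in W_j$, both $SU$ and $US$ send $v_j$ to $\la_{U,j}\,S(v_j)$ (using that $S(v_j)\in W_j$, so $U$ acts on it again by $\la_{U,j}$), hence $SU=US$. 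For the inclusion $\subseteq$, suppose $S\in\cA$, i.e. $S$ commutes with every $U\in\cU$; I must show $S(W_j)\subseteq W_j$ for each $j$. Here I would use condition~(2): fix $j$ and $v\in W_j$, decompose $S(v)=\sum_k w_k$ with $w_k\in W_k$, and for each $k\ne j$ pick $U$ with $\la_{U,j}\ne\la_{U,k}$; applying $U$ and using $US(v)=S(U(v))=\la_{U,j}S(v)$ forces $(\la_{U,k}-\la_{U,j})w_k=0$, whence $w_k=0$. Thus $S(v)\in W_j$.

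For the algebra isomorphism, I would define the natural map $\Phi\colon\cA\to\bigoplus_{j\in J}\cB(W_j)$ that sends $S$ to the family $(S|_{W_j})_{j\in J}$ of its compressions to the invariant subspaces; by the description \eqref{eq:conmutant_description} just established, each $S|_{W_j}$ is a well-defined bounded operator on $W_j$, so $\Phi$ lands in the bounded direct sum $\bigoplus_{j\in J}\cB(W_j)$. It is routine that $\Phi$ is linear, multiplicative, and $*$-preserving: since the $W_j$ are mutually orthogonal and $S$-invariant, $S$ is the orthogonal direct sum of its restrictions, so products and adjoints are computed componentwise. The map $\Phi$ is injective because $H=\bigoplus_j W_j$ forces an operator with all restrictions zero to vanish, and surjective because any bounded family $(S_j)_j$ with $\sup_j\|S_j\|<\infty$ assembles into a bounded operator $S=\bigoplus_j S_j$ on $H$ that leaves each $W_j$ invariant, hence lies in $\cA$ by \eqref{eq:conmutant_description}.

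The one point requiring genuine care is the \emph{isometry} of $\Phi$, that is, the identity $\|S\|=\sup_{j\in J}\|S|_{W_j}\|$. For a direct sum of mutually orthogonal invariant subspaces this is standard: given $v=\sum_j v_j$, orthogonality gives $\|Sv\|^2=\sum_j\|S|_{W_j}(v_j)\|^2\le(\sup_j\|S|_{W_j}\|^2)\sum_j\|v_j\|^2=(\sup_j\|S|_{W_j}\|^2)\|v\|^2$, so $\|S\|\le\sup_j\|S|_{W_j}\|$; the reverse inequality is immediate since each $W_j$ is a subspace on which $S$ restricts to $S|_{W_j}$. I expect this norm computation, together with the verification that the assembled operator is bounded in the surjectivity step, to be the main (though still elementary) obstacle, since it is where the choice of the \emph{bounded} direct sum of von Neumann algebras is essential and where one must be careful that $J$ may be countably infinite. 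Finally, I would remark that $\cA$ is a von Neumann algebra because it is a commutant, consistent with the direct-sum structure.
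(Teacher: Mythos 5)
Your proposal is correct and follows essentially the same route as the paper: the same two inclusions establishing \eqref{eq:conmutant_description}, and the same compression map $S\mapsto(S|_{W_j})_{j\in J}$ with the norm identity $\|S\|=\sup_{j}\|S|_{W_j}\|$ for the isometric isomorphism. The only cosmetic difference is that for the inclusion $\cA\subseteq\{S\colon\ S(W_j)\subseteq W_j\}$ you apply $U$ directly to the orthogonal decomposition of $Sv$ and compare components, whereas the paper tests $\langle Sf,g\rangle$ against $g\in W_k$ via $U^\ast$ and the identity $\la_{U^\ast,j}=\overline{\la_{U,j}}$; both arguments are equally valid.
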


\begin{proof}
1. Since $\cU$ is a self-adjoint subset of $\cB(H)$,
its commutant $\cA$ is a von Neumann algebra
\cite[Proposition 18.1]{Zhu1993}.

2. Notice that if $U\in\cU$ and $j\in J$, then
$\la_{U^\ast,j}=\overline{\la_{U,j}}$.
Indeed, for every $v$ in $W_j\setminus\{0\}$
\[
\la_{U,j}\|v\|_H^2
=\langle\la_{U,j}v,v\rangle_H
=\langle Uv,v\rangle_H
=\langle v,U^\ast v\rangle_H
=\langle v,\la_{U^\ast,j}v\rangle_H
=\overline{\la_{U^\ast,j}}\,\|v\|_H^2.
\]
3. Let $S\in\cA$, $j\in J$, $f\in W_j$.
We are going to prove that $Sf\in W_j$.
If $k\in J\setminus\{j\}$ and $g\in W_k$,
then there exists $U$ in $\cU$ with $\la_{U,j}\ne\la_{U,k}$, and
\[
\la_{U,j}\langle Sf,g\rangle_H
=\langle SUf,g\rangle_H
=\langle USf,g\rangle_H
=\langle Sf,U^\ast g\rangle_H
=\la_{U,k}\langle Sf,g\rangle_H.
\]
which implies that $\langle Sf,g\rangle_H=0$.
Since $H=\bigoplus_{k\in H}W_j$,
the vector $Sf$ expands into the series of the form
$Sf=\sum_{q\in J}h_q$ with $h_k\in W_k$.
For every $k$ in $J\setminus\{j\}$,
\[
0=\langle Sf,h_k\rangle_H
=\langle h_k,h_k\rangle_H
+\sum_{q\in J\setminus\{k\}}\langle h_q,h_k\rangle_H
=\|h_k\|_H^2.
\]
Thus, $Sf=h_j\in W_j$.

4. Now suppose that $S\in\cB(H)$ and
$S(W_j)\subseteq W_j$ for every $j\in J$.
Then for every $U$ in $\cU$, $j$ in $J$, and $g$ in $W_j$,
\[
USg=U(Sg)=\la_{U,j}Sg=S(\la_{U,j}g)=SUg.
\]
In general, if $f$ in $H$, then $f=\sum_{j\in J}g_j$
with some $g_j$ in $W_j$, and
\[
USf=\sum_{j\in J}USg_j=\sum_{j\in J}SUg_j=SUf.
\]
5. Using \eqref{eq:conmutant_description}
we are going to prove that $\cA$
is isometrically isomorphic to $\bigoplus_{j\in J}\cB(W_j)$.
Given $S$ in $\cA$, for every $j$ in $J$
we denote by $A_j$ the compression of $S$
onto the invariant subspace $W_j$.
Then the family $(A_j)_{j\in J}$
belongs to $\bigoplus_{d\in J}\cB(W_d)$, and
$\|S\|=\sup_{j\in J}\|A_j\|$.

Conversely, given a bounded sequence $(A_j)_{j\in J}$
with $A_j$ in $\cB(W_j)$, we put
\[
S\left(\sum_{j\in J}g_j\right)
=\sum_{j\in J}A_j g_j\qquad(g_j\in W_j).
\]
Then $S(W_j)\subseteq W_j$ for every $j$ in $J$, thus $S\in\cA$.
Thereby we have constructed isometrical isomorphisms between $\cA$
and $\bigoplus_{j\in J}\cB(W_j)$.
\end{proof}

Proposition~\ref{prop:diagonalizing_family_of_subspaces}
implies that the von Neumann algebra generated by $\cU$
consists of all operators
that act as scalar operators on each $W_j$,
and can be naturally identified with
$\bigoplus_{j\in J}\bC I_{W_j}$.

\begin{prop}\label{prop:diagonalizing_family_in_subspace}
Let $H$, $\cU$, and $(W_j)_{j\in J}$ be like in
Definition~\ref{def:diagonalizing_family_of_subspaces},
and $H_1$ be a closed subspace of $H$ invariant under $\cU$.
For every $U$ in $\cU$, denote by $U_1$ the compression
of $U$ onto the invariant subspace $H_1$, and put
\[
\cU_1\eqdef\{U_1\colon\ U\in\cU\},\qquad
J_1\eqdef\{j\in J\colon\ W_j\cap H_1\ne\{0\}\}.
\]
Then
\begin{equation}\label{eq:H1_orthogonal_decomposition}
H_1=\bigoplus_{j\in J_1}(W_j\cap H_1),
\end{equation}
and the family $(W_j\cap H_1)_{j\in J}$ diagonalizes $\cU_1$.
\end{prop}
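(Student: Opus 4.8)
The plan is to establish the two claims separately. For the orthogonal decomposition \eqref{eq:H1_orthogonal_decomposition}, I would first observe that each $U$ in $\cU$ acts as the scalar $\la_{U,j}$ on $W_j$, so the orthogonal projection $Q_j$ onto $W_j$ commutes with every $U$ (this follows because $\cU$ is self-adjoint and the $W_j$ are $\cU$-invariant, using part~2 of the proof of Proposition~\ref{prop:diagonalizing_family_of_subspaces}). Since $H_1$ is invariant under $\cU$, and $\cU$ is self-adjoint, $H_1$ is in fact a reducing subspace, so its orthogonal projection $P_1$ also commutes with each $U$, hence with each $Q_j$. Two commuting orthogonal projections have a product $Q_j P_1$ that is again an orthogonal projection, namely onto $W_j\cap H_1$. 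The plan is to use this to show that $P_1 = \sum_{j\in J} Q_j P_1$ decomposes the identity on $H_1$ into the orthogonal projections onto the subspaces $W_j\cap H_1$.

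First I would write, for any $f$ in $H_1$, the expansion $f = P_1 f = \sum_{j\in J} Q_j f$ coming from $H = \bigoplus_{j\in J} W_j$. The key step is that each summand $Q_j f$ already lies in $H_1$: indeed $Q_j f = Q_j P_1 f = P_1 Q_j f \in H_1$ because $P_1$ and $Q_j$ commute. Therefore $Q_j f \in W_j \cap H_1$, and the indices $j$ for which $Q_j f \neq 0$ necessarily lie in $J_1$. This yields the orthogonal series representation $f = \sum_{j\in J_1}(Q_j f)$ with pairwise orthogonal terms in the spaces $W_j\cap H_1$, establishing \eqref{eq:H1_orthogonal_decomposition}.

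Next I would verify that $(W_j\cap H_1)_{j\in J}$ diagonalizes $\cU_1$ in the sense of Definition~\ref{def:diagonalizing_family_of_subspaces}. Condition~1 is immediate: for $j$ in $J_1$ and $U_1$ in $\cU_1$, every $v$ in $W_j\cap H_1$ satisfies $U_1 v = U v = \la_{U,j} v$, since $U_1$ is the compression of $U$ and $v$ lies in the $\cU$-invariant $W_j$; so one may take $\la_{U_1,j} = \la_{U,j}$. For condition~2, given distinct $j,k$ in $J_1$, the original family gives some $U$ in $\cU$ with $\la_{U,j}\ne\la_{U,k}$, and the corresponding $U_1$ in $\cU_1$ then has $\la_{U_1,j}=\la_{U,j}\ne\la_{U,k}=\la_{U_1,k}$.

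The main obstacle is the commutation $P_1 Q_j = Q_j P_1$, which underpins everything: it requires knowing that $H_1$ is a \emph{reducing} (not merely invariant) subspace for $\cU$. This is where self-adjointness of $\cU$ is essential — invariance of $H_1$ under the self-adjoint set $\cU$ forces invariance of $H_1^\perp$ as well, so that $P_1$ commutes with each $U$; similarly each $Q_j$ commutes with each $U$. But commuting with all of $\cU$ does not automatically make $P_1$ and $Q_j$ commute with \emph{each other}. I would close this gap by noting that $Q_j$ belongs to the von Neumann algebra generated by $\cU$ (it is the spectral projection cutting out the joint eigenvalue pattern distinguishing $W_j$, which exists precisely by condition~2 of the diagonalization), while $P_1$ lies in the commutant $\cA$ of $\cU$; since $\cA$ and the von Neumann algebra generated by $\cU$ commute, we get $P_1 Q_j = Q_j P_1$. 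Once this is secured, the remaining steps are routine bookkeeping with orthogonal projections.
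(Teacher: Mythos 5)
Your proof is correct and follows essentially the same route as the paper: both arguments hinge on the fact that the orthogonal projection $P_1$ onto $H_1$ lies in the commutant $\cA$ of $\cU$ and therefore respects the decomposition $H=\bigoplus_{j\in J}W_j$. The only real difference is that you secure the commutation $P_1Q_j=Q_jP_1$ by placing $Q_j$ in the bicommutant $\cU''$, whereas the paper gets the same conclusion more directly from \eqref{eq:conmutant_description}: since $P_1\in\cA$ one has $P_1(W_j)\subseteq W_j$, hence $P_1(W_j)=W_j\cap H_1$, and writing $f=P_1f=\sum_{j}P_1g_j$ for $f\in H_1$ yields the decomposition without invoking the projections $Q_j$ at all.
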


\begin{proof}
Denote by $P_1$ the orthogonal projection that acts in $H$
and has image $H_1$.
The condition that $H_1$ is invariant under $\cU$
means that $P_1\in\cA$.
By~\eqref{eq:conmutant_description},
for every $j$ in $J$ the subspace $P_1(W_j)$
is contained in $W_j$ and therefore coincides with $W_j\cap H_1$.
This easily implies~\eqref{eq:H1_orthogonal_decomposition}.

If $U\in\cU$ and $j\in J$,
then $W_j\cap H_1\subseteq\ker(\la_{U,j}I_{H_1}-U_1)$.
So, the eigenvalues $\la_{U_1,j}$ coincide
with $\la_{U,j}$ for every $j$ in $J_1$.

If $j,k\in J_1$ and $j\ne k$,
then there exists $U$ in $\cU$ such that $\la_{U,j}\ne\la_{U,k}$,
which means that $\la_{U_1,j}\ne\la_{U_1,k}$.
\end{proof}

\subsection*{\texorpdfstring{Radial operators in $\boldsymbol{L^2(\bC,\Gaussian)}$}{Radial operators in L2(C,Gaussian)}}

For each $\tau$ in $\bT$, denote by $R_\tau$
the rotation operator acting in $L^2(\bC,\gamma)$:
\begin{equation}\label{eq:def_rotation}
(R_\tau f)(z) = f(\tau^{-1}z).
\end{equation}
The family $(R_\tau)_{\tau\in\bT}$
is a unitary representation of the group $\bT$ in $L^2(\bC,\gamma)$.
Notice that we are in the situation of Section~\ref{sec:unitary_representations},
with $\Omega=\bC$, $\nu=\Gaussian$, $G=\bT$,
$\alpha(\tau)(z)=\tau z$, $\rho(\tau)=R_\tau$.

Denote by $\cR$ the set of all radial operators acting in $L^2(\bC,\gamma)$:
\[
\cR = \{S\in\cB(L^2(\bC,\gamma))\colon\quad
\forall\tau\in\bT\quad R_\tau S=S R_\tau\}.
\]
Since the set $\{R_\tau\colon\ \tau\in\bT\}$
is an autoadjoint subset of $\cB(L^2(\bC,\Gaussian))$,
its commutant $\cR$ is a von Neumann algebra.

\begin{lem}\label{lem:diagonal_subspaces_diagonalize_rotation_operators}
The family $(\cD_d)_{d\in\bZ}$ diagonalizes
the collection $\{R_\tau\colon\ \tau\in\bT\}$
in the sense of
Definition~\ref{def:diagonalizing_family_of_subspaces}.
\end{lem}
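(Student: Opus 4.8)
The plan is to verify the two conditions of Definition~\ref{def:diagonalizing_family_of_subspaces} for the family $(\cD_d)_{d\in\bZ}$ and the self-adjoint collection $\{R_\tau\colon\ \tau\in\bT\}$. By Corollary~\ref{cor:decomposition_L2_into_diagonals} we already know that $L^2(\bC,\Gaussian)=\bigoplus_{d\in\bZ}\cD_d$, so the orthogonal decomposition hypothesis is in place; it remains only to exhibit the eigenvalues and to separate distinct indices.

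First I would show that each $R_\tau$ acts as a scalar on $\cD_d$. The natural route is to use the polar description of $\cD_d$ from Corollary~\ref{cor:polar_description_of_D}: every $f\in\cD_d$ has the form $f(r\sigma)=\sigma^d h(r^2)$ for $\sigma\in\bT$ and $h\in L^2([0,+\infty),\econstant^{-x}\,\dif{}x)$. Applying the definition \eqref{eq:def_rotation} of the rotation operator gives
\[
(R_\tau f)(r\sigma)
= f(\tau^{-1}r\sigma)
= (\tau^{-1}\sigma)^d h(r^2)
= \tau^{-d}\,\sigma^d h(r^2)
= \tau^{-d} f(r\sigma),
\]
since $|\tau^{-1}|=1$ does not change the radial part. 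Hence $R_\tau$ restricted to $\cD_d$ is multiplication by the scalar $\la_{R_\tau,d}\eqdef\tau^{-d}$, which establishes condition~(1) with $W_d=\cD_d$. (Equivalently, one may argue on the orthonormal basis $(b_{q+d,q})_q$ from Corollary~\ref{cor:basis_of_D}, using that each $b_{p,q}$ carries the angular factor $\tau^{p-q}$ visible in \eqref{eq:b_via_Laguerre_function}, so that $R_\tau b_{p,q}=\tau^{-(p-q)}b_{p,q}$.)

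For condition~(2), I must separate two distinct diagonal indices $d\ne e$ by some element of the collection. It suffices to find a single $\tau\in\bT$ with $\tau^{-d}\ne\tau^{-e}$, i.e.\ $\tau^{\,e-d}\ne1$. Since $e-d$ is a nonzero integer, any $\tau$ that is not a $(e-d)$-th root of unity works; concretely one may take $\tau=\econstant^{\imagunit\pi/(|e-d|+1)}$, or simply invoke that a nontrivial character of $\bT$ is not identically~$1$. This gives $\la_{R_\tau,d}\ne\la_{R_\tau,e}$ and completes the diagonalization.

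The argument is entirely routine once the polar form of $\cD_d$ is invoked, so there is no genuine obstacle; the only point requiring a little care is being explicit that the angular exponent $d$ is exactly what governs the eigenvalue, which is precisely the content of Corollary~\ref{cor:polar_description_of_D} (or of \eqref{eq:b_via_Laguerre_function}). The self-adjointness of $\{R_\tau\colon\ \tau\in\bT\}$, needed for the framework of Definition~\ref{def:diagonalizing_family_of_subspaces}, follows from $R_\tau^\ast=R_{\tau^{-1}}$, which is immediate since each $R_\tau$ is unitary.
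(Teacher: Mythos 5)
Your proof is correct and follows essentially the same route as the paper: the paper's primary argument verifies $R_\tau b_{p,q}=\tau^{-(p-q)}b_{p,q}$ on the orthonormal basis of $\cD_d$ and explicitly notes the polar description of Corollary~\ref{cor:polar_description_of_D} as an equivalent alternative, which is the route you take as primary (and you mention the basis route parenthetically). Your separation of distinct indices by a suitable root of unity matches the paper's choice as well.
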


\begin{proof}
If $\tau\in\bT$ and $d\in\bZ$, then
\begin{equation}\label{eq:diagonal_subspaces_as_eigensubspaces}
\cD_d\subseteq\ker(\tau^{-d} I-R_\tau).
\end{equation}
Indeed, for every $p,q\in\bZ$ with $p-q=d$
the basic function $b_{p,q}$ is an eigenfunction of $R_\tau$
associated to the eigenvalue $\tau^{-d}$:
\begin{equation}\label{eq:rotation_on_the_basics}
R_\tau b_{p,q} = \tau^{q-p} b_{p,q}=\tau^{-d}b_{p,q},
\end{equation}
and by Corollary~\ref{cor:basis_of_D}
the functions $b_{p,q}$ with $p-q=d$
form an orthonormal basis of $\cD_d$.
Another way to prove~\eqref{eq:diagonal_subspaces_as_eigensubspaces}
is to use~Corollary~\ref{cor:polar_description_of_D}.

If $d_1,d_2\in\bZ$ and $d_1\ne d_2$,
then $\tau^{-d_1}\ne\tau^{-d_2}$ for many values of $\tau$,
for example, for $\tau=\econstant^{\frac{\imagunit\pi}{d_1-d_2}}$
or for $\tau=\econstant^{\imagunit\tht}$ with any irrational $\tht$.
\end{proof}

\begin{prop}\label{prop:radial_operators_on_L2}
The von Neumann algebra $\cR$ consists of all operators
that act invariantly on $\cD_d$ for every $d$ in $\bZ$,
and is isometrically isomorphic to $\bigoplus_{d\in\bZ}\cB(\cD_d)$.
\end{prop}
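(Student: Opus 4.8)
The plan is to apply the abstract Proposition~\ref{prop:diagonalizing_family_of_subspaces} directly, with $H=L^2(\bC,\Gaussian)$, $\cU=\{R_\tau\colon\ \tau\in\bT\}$, and the family of subspaces $(\cD_d)_{d\in\bZ}$. Before invoking that proposition I would verify its hypotheses. First, $\cU$ is a self-adjoint subset of $\cB(L^2(\bC,\Gaussian))$: each $R_\tau$ is unitary with $R_\tau^\ast=R_{\tau^{-1}}$, and $\tau^{-1}\in\bT$, so the adjoint of every element of $\cU$ lies again in $\cU$. Second, each $\cD_d$ is a nonzero closed subspace (it contains the basic function $b_{\max\{d,0\},\max\{-d,0\}}$), and by Corollary~\ref{cor:decomposition_L2_into_diagonals} these subspaces furnish the orthogonal decomposition $L^2(\bC,\Gaussian)=\bigoplus_{d\in\bZ}\cD_d$, exactly as Definition~\ref{def:diagonalizing_family_of_subspaces} requires. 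Third, the family $(\cD_d)_{d\in\bZ}$ diagonalizes $\cU$ by Lemma~\ref{lem:diagonal_subspaces_diagonalize_rotation_operators}.

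Once these hypotheses are in place, the conclusion follows verbatim from Proposition~\ref{prop:diagonalizing_family_of_subspaces}. Indeed, the commutant $\cA$ of $\cU$ there is by definition precisely our algebra $\cR$, and the proposition asserts both that $\cR=\{S\in\cB(L^2(\bC,\Gaussian))\colon\ S(\cD_d)\subseteq\cD_d\ \text{for all}\ d\in\bZ\}$ and that $\cR$ is isometrically isomorphic to $\bigoplus_{d\in\bZ}\cB(\cD_d)$. Thus the statement splits into the invariance characterization and the direct-sum decomposition, and both are read off immediately.

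I do not anticipate a genuine obstacle here, since all the substantive content has already been established: the eigenvalue computation $R_\tau b_{p,q}=\tau^{-d}b_{p,q}$ and the separation of distinct indices $d$ are handled in Lemma~\ref{lem:diagonal_subspaces_diagonalize_rotation_operators}, while the structural decomposition of the commutant of a diagonalized self-adjoint family is the content of Proposition~\ref{prop:diagonalizing_family_of_subspaces}. The only points demanding any care are the routine checks that $\cU$ is self-adjoint and that no $\cD_d$ degenerates to $\{0\}$, both of which are immediate from the explicit orthonormal basis of each $\cD_d$ given in Corollary~\ref{cor:basis_of_D}.
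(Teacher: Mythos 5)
Your proposal is correct and follows exactly the paper's own route: the paper derives this proposition as an immediate consequence of Proposition~\ref{prop:diagonalizing_family_of_subspaces} combined with Lemma~\ref{lem:diagonal_subspaces_diagonalize_rotation_operators}, which is precisely your plan. Your additional verifications (self-adjointness of $\{R_\tau\colon\ \tau\in\bT\}$, nondegeneracy of each $\cD_d$, and the orthogonal decomposition from Corollary~\ref{cor:decomposition_L2_into_diagonals}) are routine checks the paper leaves implicit.
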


\begin{proof}
This is a consequence of
Proposition~\ref{prop:diagonalizing_family_of_subspaces}
and Lemma~\ref{lem:diagonal_subspaces_diagonalize_rotation_operators}.
\end{proof}

Now we will describe all radial operators of finite rank.

\begin{remark}\label{rem:finite_rank_operators}
It is well known that every linear operator of a finite rank $m$,
acting in a Hilbert space $H$, can be written in the form
\begin{equation}\label{eq:finite_rank}
Sf=\sum_{k=1}^m \xi_k \langle f,u_k\rangle_H v_k,
\end{equation}
where $\xi_1,\ldots,\xi_m\in\bC\setminus\{0\}$,
$u_1,\ldots,u_m$ and $v_1,\ldots,v_m$
are some orthonormal lists of vectors in $H$.
\end{remark}

\begin{cor}\label{cor:radial_operators_of_finite_rank_in_L2}
Let $m\in\bN$ and $S\in\cB(L^2(\bC,\Gaussian))$
such that the rank of $S$ is $m$.
Then $S$ is radial if and only if
there exist $d_1,\ldots,d_m$ in $\bZ$
such that $S$ has the form~\eqref{eq:finite_rank},
where $u_j$, $v_j$, $\xi_j$ are like in Remark~\ref{rem:finite_rank_operators},
and additionally $u_j,v_j\in\cD_{d_j}$ for every
$j$ in $\{1,\ldots,m\}$.
\end{cor}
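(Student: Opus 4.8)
The plan is to prove both implications from the block structure of radial operators furnished by Proposition~\ref{prop:radial_operators_on_L2}, together with the fact, recorded in~\eqref{eq:diagonal_subspaces_as_eigensubspaces}, that each $\cD_d$ is an eigenspace of every rotation $R_\tau$.

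For the ``if'' direction, suppose $S$ has the form~\eqref{eq:finite_rank} with $u_j,v_j\in\cD_{d_j}$. By~\eqref{eq:diagonal_subspaces_as_eigensubspaces} each $\cD_{d_j}$ lies in $\ker(\tau^{-d_j}I-R_\tau)$, so $R_\tau v_j=\tau^{-d_j}v_j$ and $R_{\tau^{-1}}u_j=\tau^{d_j}u_j$ for every $\tau$ in $\bT$. Since $R_\tau$ is unitary with $R_\tau^{-1}=R_{\tau^{-1}}$, a direct computation would give, for $f$ in $L^2(\bC,\Gaussian)$,
\[
R_\tau Sf=\sum_{j=1}^m\xi_j\,\tau^{-d_j}\langle f,u_j\rangle v_j
\qquad\text{and}\qquad
SR_\tau f=\sum_{j=1}^m\xi_j\langle f,R_{\tau^{-1}}u_j\rangle v_j,
\]
where in the second expression $\langle f,R_{\tau^{-1}}u_j\rangle=\langle f,\tau^{d_j}u_j\rangle=\tau^{-d_j}\langle f,u_j\rangle$ because $|\tau|=1$. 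Hence $R_\tau S=SR_\tau$ for all $\tau$, so $S$ is radial; note this direction uses only that $u_j,v_j\in\cD_{d_j}$, not the orthonormality of the lists.

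For the ``only if'' direction, assume $S$ is radial of rank $m$. By Proposition~\ref{prop:radial_operators_on_L2}, $S$ leaves each $\cD_d$ invariant, so with respect to the decomposition $L^2(\bC,\Gaussian)=\bigoplus_{d\in\bZ}\cD_d$ of Corollary~\ref{cor:decomposition_L2_into_diagonals} the operator $S$ is block diagonal: writing $S_d$ for the compression of $S$ to $\cD_d$, one has $Sf=\sum_d S_d f_d$, where $f_d$ denotes the component of $f$ in $\cD_d$. Because the ranges of the $S_d$ lie in the mutually orthogonal subspaces $\cD_d$, the range of $S$ is their orthogonal direct sum; hence only finitely many $S_d$ are nonzero and $\sum_d\operatorname{rank}(S_d)=m$. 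I would then apply Remark~\ref{rem:finite_rank_operators} inside each nonzero block to write $S_d g=\sum_{i=1}^{r_d}\eta_{d,i}\langle g,p_{d,i}\rangle q_{d,i}$ with $\eta_{d,i}\neq0$ and $(p_{d,i})_i$, $(q_{d,i})_i$ orthonormal lists in $\cD_d$.

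Finally I would reassemble these local representations into a single global one. Since $p_{d,i}\in\cD_d$, orthogonality of distinct diagonal subspaces gives $\langle f,p_{d,i}\rangle=\langle f_d,p_{d,i}\rangle$, whence $Sf=\sum_d\sum_{i=1}^{r_d}\eta_{d,i}\langle f,p_{d,i}\rangle q_{d,i}$. Relabelling the $\sum_d r_d=m$ pairs $(d,i)$ by a single index $j$ and setting $\xi_j=\eta_{d,i}$, $u_j=p_{d,i}$, $v_j=q_{d,i}$, $d_j=d$ produces the desired form~\eqref{eq:finite_rank} with $u_j,v_j\in\cD_{d_j}$. The point requiring care---the main, though mild, obstacle---is checking that the combined lists $(u_j)_{j=1}^m$ and $(v_j)_{j=1}^m$ are genuinely orthonormal and that there are exactly $m$ terms: within a fixed block this is the orthonormality supplied by Remark~\ref{rem:finite_rank_operators}, while across different blocks it follows from $\cD_d\perp\cD_{d'}$ for $d\neq d'$, and the count is exactly $m$ by the additivity of rank over the orthogonal block decomposition.
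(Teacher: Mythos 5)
Your proposal is correct and follows essentially the same route as the paper: the ``only if'' direction decomposes $S$ into its compressions to the diagonal subspaces $\cD_d$ via Proposition~\ref{prop:radial_operators_on_L2}, applies Remark~\ref{rem:finite_rank_operators} blockwise, and joins the resulting decompositions. You merely spell out the ``if'' direction and the orthonormality bookkeeping that the paper leaves implicit as a ``simple consequence.''
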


\begin{proof}
This is a simple consequence of Proposition~\ref{prop:radial_operators_on_L2}.
Suppose that $S$ is radial.
For every $d$ in $\bZ$ let $A_d$ be the compression of $S$ to $\cD_d$.
There is only a finite set of $d$ such that $A_d\ne0$.
Apply Remark~\ref{rem:finite_rank_operators}
to each of the nonzero operators $A_d$
and join the obtained decompositions.
\end{proof}

Following Zorboska~\cite{Zorboska2003},
we will describe radial operators in term of the ``radialization''
$\Radialization\colon\cB(L^2(\bC,\Gaussian))
\to\cB(L^2(\bC,\Gaussian))$ defined by
\[
\Radialization(S)\eqdef
\int_\bT R_\tau S R_{\tau^{-1}}\,\dif\mu_\bT(\tau),
\]
where $\mu_\bT$ is the normalized Haar measure on $\bT$.
The integral is understood in the weak sense,
i.e. the operator $\Radialization(S)$
is actually defined by the equality of the corresponding
sesquilinear forms:
\[
\langle\Radialization(S)f,g\rangle
=\int_\bT \langle R_\tau S R_{\tau^{-1}}f,g\rangle\,\dif\mu_\bT(\tau).
\]
Making an appropriate change of variables in the integral
and using the invariance of the measure $\mu_\bT$,
we see that $\Radialization(S)\in\cR$.
This immediately implies the following criterion
of radial operators in terms of the radialization.

\begin{prop}\label{prop:criterion_radial_in_terms_of_radialization}
Let $S\in\cB(L^2(\bC,\Gaussian))$.
Then $S\in\cR$ if and only if $\Radialization(S)=S$.
\end{prop}

\subsection*{\texorpdfstring{Radial operators in $\boldsymbol{\cF_n}$}{Radial operators in Fn}}

Let $n\in\bN$.
Obviously, the reproducing kernel of $\cF_n$,
given by \eqref{eq:Kn_formula},
is invariant under simultaneous rotations in both arguments:
\begin{equation}\label{eq:Kn_and_rotations}
K_{n,\tau z}(\tau w)=K_{n,z}(w)\qquad(z,w\in\bC,\ \tau\in\bT).
\end{equation}
Therefore, by Proposition~\ref{prop:criterion_invariant_subspace},
$\cF_n$ is invariant under rotations, and $P_n\in\cR$.
For every $\tau$ in $\bT$,
we denote by $R_{n,\tau}$ the compression of $R_\tau$
onto the space $\cF_n$.
In other words, the operator $R_{n,\tau}$ acts in $\cF_n$
and is defined by \eqref{eq:def_rotation}.
The family $(R_{n,\tau})_{\tau\in\bT}$
is a unitary representation of $\bT$ in $\cF_n$.
Let $\cR_n$ be the von Neumann algebra
of all bounded linear radial operators acting in $\cF_n$.

Denote by $\fM_n$ the following direct sum of matrix algebras:
\[
\fM_n \eqdef \bigoplus_{d=-n+1}^\infty \Mat_{\min\{n,n+d\}}
= \left(\bigoplus_{d=-n+1}^{-1} \Mat_{n+d}\right)
\oplus
\left(\bigoplus_{d=0}^{\infty} \Mat_{n}\right).
\]
The elements of $\fM_n$ are matrix sequences of the form
$A=(A_d)_{d=-n+1}^\infty$,
where $A_d\in\Mat_{n+d}$ if $d<0$, $A_d\in\Mat_n$ if $d\ge0$, and
\[
\sup_{d\ge-n+1}\|A_d\|<+\infty.
\]
Now we are ready to prove Theorem~\ref{thm:radial_polyanalytic_Fock}.

\begin{proof}[Proof of Theorem~\ref{thm:radial_polyanalytic_Fock}.]
By~Propositions~\ref{prop:diagonalizing_family_of_subspaces},
\ref{prop:diagonalizing_family_in_subspace}
and formula~\eqref{eq:truncated_diagonal_as_intersection},
$\cR_n$ is isometrically isomorphic to the direct sum
of $\cB(\cD_{d,\min\{n,n+d\}})$, with $d\ge-n+1$.
Using the orthonormal basis $(b_{d+k,k})_{k=\max\{0,-d\}}^{n-1}$
of the space $\cD_{d,\min\{n,n+d\}}$,
we represent linear operators on this space as matrices.
Define $\Phi_n\colon\cR_n\to\fM_n$ by
\begin{equation}\label{eq:Phi}
\Phi_n(S)=\left(\left[\left\langle Sb_{d+k,k},b_{d+j,j}\right\rangle\right]_{j,k=\max\{0,-d\}}^{n-1}\right)_{d=-n+1}^{\infty}.
\end{equation}
Then $\Phi_n$ is an isometrical isomorphism.
\end{proof}

Similarly to Corollary~\ref{cor:radial_operators_of_finite_rank_in_L2}, there is a simple description of radial operators
of finite rank acting in $\cF_n$.
Of course, now $d_1,\ldots,d_m\ge-n+1$.

By Corollary~\ref{cor:Berezin_of_invariant_operator},
if $S\in\cR_n$, then $\Berezin_{\cF_n}(S)$ is a radial function.
For $n=1$, the Berezin transform $\Berezin_{\cF_1}$
is injective.
So, if $S\in\cB(\cF_1)$ and the function $\Berezin_{\cF_1}(S)$ is radial, then $S\in\cR_1$.
For $n\ge 2$, there are nonradial operators $S$
with radial Berezin transforms.

\begin{example}\label{example:nonradial_operator_with_radial_Berezin_transform}
Let $n\ge 2$.
Define $u$, $v$, and $S$ like in the proof of Proposition~\ref{prop:Berezin_not_injective}.
Then $\Berezin(S)$ is the zero constant.
In particular, $\Berezin(S)$ is a radial function.
On the other hand, $Sb_{0,0}=b_{1,0}$,
the subspace $\cD_0$ is not invariant under $S$,
and thus $S$ is not radial.
\end{example}

\subsection*{\texorpdfstring{Radial operators in $\boldsymbol{\cF_{(n)}}$}{Radial operators in F(n)}}

Let $n\in\bN$.
By~Proposition~\ref{prop:criterion_invariant_subspace}
and formula~\eqref{eq:repr_kernel_true_Fn},
the subspace $\cF_{(n)}$ is invariant under the rotations
$R_\tau$ for all $\tau$ in $\bT$.
Denote the corresponding compression of $R_\tau$ by $R_{(n),\tau}$.
Let $\cR_{(n)}$ be the von Neumann algebra of all radial operators in $\cF_{(n)}$.

\begin{proof}[Proof of Theorem~\ref{thm:radial_true_polyanalytic_Fock}]
Corollaries~\ref{cor:basis_of_D} and \ref{cor:basis_in_true_polyanalytic} give
\begin{equation}\label{eq:intersection_diagonal_with_column}
\cD_d\cap\cF_{(n)}
=\begin{cases}
\bC b_{d+n-1,n-1}, & d\ge-n+1,\\
\{0\}, & d<-n+1.
\end{cases}
\end{equation}
By~Propositions~\ref{prop:diagonalizing_family_of_subspaces},
\ref{prop:diagonalizing_family_in_subspace}
and formula~\eqref{eq:intersection_diagonal_with_column},
$\cR_{(n)}$ consists of the operators
that act invariantly on $\bC b_{d+n-1,n-1}$, $d\ge-n+1$,
i.e. are diagonal with respect to the basis
$(b_{p,n-1})_{p=0}^\infty$.
Therefore the function
$\Phi_{(n)}\colon\cR_{(n)}\to\ell^\infty(\bNz)$,
defined by
\begin{equation}\label{eq:Phi_true}
\Phi_{(n)}(S)
=\bigl(\langle S b_{p,n-1},b_{p,n-1}\rangle\bigr)_{p=0}^\infty,
\end{equation}
is an isometric isomorphism.
\end{proof}

Similarly to Corollary~\ref{cor:radial_operators_of_finite_rank_in_L2}, there is a simple description of radial operators
of finite rank acting in $\cF_{(n)}$.

\section{Radial Toeplitz operators in polyanalytic spaces}
\label{sec:Toeplitz}

A measurable function $g\colon\bC\to\bC$ is called \emph{radial}
if for every $\tau$ in $\bT$ the equality $g(\tau z)=g(z)$
is true for a.e. $z$ in $\bC$.
If $g\in L^2(\bC,\Gaussian)$, then this condition means that
$R_\tau g=g$ for every $\tau$ in $\bT$.

Given a function $a$ in $L^\infty([0,+\infty))$,
let $\widetilde{a}$ be its extension
defined on $\bC$ as
\[
\widetilde{a}(z) \eqdef a(|z|)\qquad(z\in\bC).
\]
It is easy to see that a function $g$ in $L^\infty(\bC)$ is radial
if and only if there exists $a$ in $L^\infty([0,+\infty))$
such that $g=\widetilde{a}$.

By~Lemma~\ref{lem:mul_operator_and_group_representation},
the multiplication operator $M_{\widetilde{a}}$,
acting in $L^2(\bC,\gamma)$, is radial.
Let us compute the matrix of this operator with respect to the basis $(b_{p,q})_{p,q\in\bNz}$.
Put
\[
\beta_{a,d,j,k}\eqdef
\langle \widetilde{a} b_{j+d,j},b_{k+d,k}\rangle
\qquad(d\in\bZ,\ j,k,j+d,k+d\in\bNz).
\]
Passing to the polar coordinates
and using~\eqref{eq:b_via_Laguerre_function} we get
\begin{equation}\label{eq:beta_as_integral}
\beta_{a,d,j,k}
=\int_0^{+\infty} a(\sqrt{t})\,
\ell_{\min\{j,j+d\}}^{(|d|)}(t)
\ell_{\min\{k,k+d\}}^{(|d|)}(t)\,\dif{}t.
\end{equation}

\begin{prop}\label{prop:radial_multiplication}
Let $a\in L^\infty([0,+\infty))$.
Then $M_{\widetilde{a}}\in\cR$, and
\[
\langle M_{\widetilde{a}}b_{p,q},b_{j,k}\rangle
=\langle \widetilde{a} b_{p,q},b_{j,k}\rangle
=\delta_{p-q,j-k} \beta_{a,p-q,q,k}.
\]
\end{prop}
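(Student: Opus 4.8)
The plan is to verify three things in turn: that $M_{\widetilde{a}}$ belongs to $\cR$, that the first equality holds, and that the matrix entry reduces to $\delta_{p-q,j-k}\,\beta_{a,p-q,q,k}$.

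First I would dispose of the membership and the trivial equality. Since $\widetilde{a}$ is radial, $\widetilde{a}\circ\alpha(\tau)=\widetilde{a}$ for every $\tau$ in $\bT$, where $\alpha(\tau)(z)=\tau z$. Lemma~\ref{lem:mul_operator_and_group_representation} then yields $M_{\widetilde{a}}R_\tau=R_\tau M_{\widetilde{a}\circ\alpha(\tau)}=R_\tau M_{\widetilde{a}}$ for every $\tau$, so $M_{\widetilde{a}}$ commutes with all rotations and hence lies in $\cR$. The first equality $\langle M_{\widetilde{a}}b_{p,q},b_{j,k}\rangle=\langle\widetilde{a}\,b_{p,q},b_{j,k}\rangle$ is nothing but the definition $M_{\widetilde{a}}f=\widetilde{a}f$.

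Next I would produce the factor $\delta_{p-q,j-k}$. The structural route is fastest: by Corollary~\ref{cor:basis_of_D} we have $b_{p,q}\in\cD_{p-q}$, and since $M_{\widetilde{a}}\in\cR$, Proposition~\ref{prop:radial_operators_on_L2} shows that $M_{\widetilde{a}}$ leaves each diagonal subspace invariant, so $M_{\widetilde{a}}b_{p,q}\in\cD_{p-q}$. Because the $\cD_d$ are pairwise orthogonal (Corollary~\ref{cor:decomposition_L2_into_diagonals}), the inner product with $b_{j,k}\in\cD_{j-k}$ vanishes unless $p-q=j-k$. Equivalently, one may compute directly in polar coordinates using Corollary~\ref{cor:b_via_Laguerre_function}: the factors $\econstant^{r^2/2}$ coming from $b_{p,q}$ and $b_{j,k}$ combine with the Gaussian weight $\econstant^{-r^2}$ and cancel, the angular integral $\int_\bT\tau^{(p-q)-(j-k)}\,\dif\mu_{\bT}(\tau)$ equals $\delta_{p-q,j-k}$, and the remaining radial integral is exactly the one displayed in \eqref{eq:beta_as_integral}.

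Finally, when $p-q=j-k=d$ I would identify the surviving value with $\beta$. Writing $b_{p,q}=b_{q+d,q}$ and $b_{j,k}=b_{k+d,k}$ and using the definition $\beta_{a,d,j',k'}=\langle\widetilde{a}\,b_{j'+d,j'},b_{k'+d,k'}\rangle$ with $j'=q$ and $k'=k$ gives $\langle\widetilde{a}\,b_{p,q},b_{j,k}\rangle=\beta_{a,d,q,k}=\beta_{a,p-q,q,k}$, as claimed. I expect no genuine obstacle here; the content is essentially bookkeeping, and the structural route sidesteps all computation. The only point demanding care is the index relabeling that pins down the surviving entry as $\beta_{a,p-q,q,k}$ with the correct subscripts; if instead one runs the explicit polar computation, one must keep track of the real sign factors $(-1)^{\min\{\cdot,\cdot\}}$ and the change of variable $t=r^2$ in order to match \eqref{eq:beta_as_integral}.
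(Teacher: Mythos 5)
Your proposal is correct and follows essentially the same route as the paper: membership in $\cR$ via Lemma~\ref{lem:mul_operator_and_group_representation}, the Kronecker delta from invariance of the diagonal subspaces $\cD_d$ under radial operators together with their mutual orthogonality, and the final identification by the definition of $\beta_{a,d,j,k}$. The paper's proof is just a terser statement of the same argument, so no further comparison is needed.
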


\begin{proof}
Use the fact that $M_{\widetilde{a}}$ is radial
and the orthogonality of the ``diagonal subspaces''.
Then apply the definition of $\beta_{a,d,j,k}$.
\end{proof}

\begin{prop}\label{prop:radial_Toeplitz_operator}
Let $g\in L^\infty(\bC)$.
Then the opeator $T_{n,g}$ is radial if and only if
the function $g$ is radial.
\end{prop}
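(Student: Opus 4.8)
The plan is to recognize that this statement is an immediate specialization of the general machinery developed in Section~\ref{sec:unitary_representations}, applied with $\Omega=\bC$, $\nu=\Gaussian$, $G=\bT$, $\alpha(\tau)(z)=\tau z$, $\rho(\tau)=R_\tau$, $H=\cF_n$, and $\rho_H(\tau)=R_{n,\tau}$. The first thing I would observe is that in this setting the composed symbol $g\circ\alpha(\tau)$ is simply the function $z\mapsto g(\tau z)$. Consequently, the statement ``$g$ is radial'' is, word for word, the statement ``$g\circ\alpha(\tau)=g$ a.e.\ for every $\tau$ in $\bT$.'' With this dictionary fixed, both implications become applications of the corollaries already established.

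For the ``if'' direction I would invoke Corollary~\ref{cor:Toeplitz_with_invariant_symbol_is_invariant_under_group_representation}: assuming $g$ radial, we have $g\circ\alpha(\tau)=g$ for every $\tau$, and the corollary yields that $T_{n,g}$ commutes with $R_{n,\tau}$ for every $\tau$, i.e.\ $T_{n,g}\in\cR_n$. For the converse, the decisive ingredient is the injectivity of the symbol-to-operator map $g\mapsto T_{n,g}$, and this is exactly what Proposition~\ref{prop:zero_Toeplitz_operator} supplies for $H=\cF_n$. Having verified that hypothesis, I would apply Corollary~\ref{cor:Toeplitz_invariant_under_group_representation}: since $T_{n,g}$ is radial, it commutes with $R_{n,\tau}$ for every $\tau$, so the corollary forces $g\circ\alpha(\tau)=g$ a.e.\ for every $\tau$, which is precisely the radiality of $g$.

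There is essentially no genuine obstacle here, since the substantive work has been front-loaded into Lemma~\ref{lem:mul_operator_and_group_representation}, Proposition~\ref{prop:Toeplitz_operator_and_group_representation}, and their corollaries. The only point that warrants a line of care is the bookkeeping identification $R_{n,\tau}=\rho_{\cF_n}(\tau)$, which holds by the very definition of $R_{n,\tau}$ as the compression of $R_\tau$ to the invariant subspace $\cF_n$; once that is noted, the two corollaries apply verbatim and the equivalence follows.
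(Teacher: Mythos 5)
Your proposal is correct and follows exactly the paper's own one-line proof, which likewise combines Proposition~\ref{prop:zero_Toeplitz_operator} with Corollaries~\ref{cor:Toeplitz_with_invariant_symbol_is_invariant_under_group_representation} and \ref{cor:Toeplitz_invariant_under_group_representation}. You have merely spelled out the specialization $\Omega=\bC$, $G=\bT$, $\alpha(\tau)(z)=\tau z$, $H=\cF_n$ that the paper leaves implicit.
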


\begin{proof}
Apply Proposition~\ref{prop:zero_Toeplitz_operator}
and Corollaries~\ref{cor:Toeplitz_with_invariant_symbol_is_invariant_under_group_representation}, \ref{cor:Toeplitz_invariant_under_group_representation}.
\end{proof}

\begin{prop}\label{prop:radial_Toeplitz_eigenvalues_true_poly_Fock}
Let $a\in L^\infty([0,+\infty))$.
Then $T_{(n),\widetilde{a}}\in\cR_{(n)}$,
the operator $T_{(n),\widetilde{a}}$ is diagonal
with respect to the orthonormal basis
$(b_{p,n-1})_{p=0}^\infty$,
and the sequence $\lambda_{a,n}$ of the corresponding eigenvalues
can be computed by
\begin{equation}\label{eq:radial_Toeplitz_eigenvalues_true_poly_Fock}
\lambda_{a,n}(p)
=\beta_{a,p-n+1,n-1,n-1}
=\int_0^{+\infty} a(\sqrt{t})\,
\bigl(\ell_{\min\{p,n-1\}}^{(|p-n+1|)}(t)\bigr)^2\,\dif{}t
\qquad(p\in\bNz).
\end{equation}
\end{prop}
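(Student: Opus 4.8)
The plan is to obtain all three assertions in turn, each as a short consequence of results already established, so that almost nothing beyond bookkeeping remains. First I would check that $T_{(n),\widetilde{a}}\in\cR_{(n)}$. Since $\widetilde{a}$ is radial, for every $\tau$ in $\bT$ we have $\widetilde{a}\circ\alpha(\tau)=\widetilde{a}$, where $\alpha(\tau)(z)=\tau z$; indeed $\widetilde{a}(\tau z)=a(|\tau z|)=a(|z|)=\widetilde{a}(z)$. Hence Corollary~\ref{cor:Toeplitz_with_invariant_symbol_is_invariant_under_group_representation}, applied with $H=\cF_{(n)}$ and $\rho_H(\tau)=R_{(n),\tau}$, shows that $T_{(n),\widetilde{a}}$ commutes with $R_{(n),\tau}$ for every $\tau$, which is precisely the statement $T_{(n),\widetilde{a}}\in\cR_{(n)}$.

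Second, the diagonality is then automatic: by Theorem~\ref{thm:radial_true_polyanalytic_Fock} every operator in $\cR_{(n)}$ is diagonal with respect to the orthonormal basis $(b_{p,n-1})_{p=0}^\infty$, so $T_{(n),\widetilde{a}}$ is diagonal and its $p$-th eigenvalue is $\lambda_{a,n}(p)=\langle T_{(n),\widetilde{a}}\,b_{p,n-1},b_{p,n-1}\rangle$. Third, I would evaluate this diagonal entry. Because $b_{p,n-1}\in\cF_{(n)}$, the projection $P_{(n)}$ fixes it, and since $P_{(n)}$ is self-adjoint,
\[
\lambda_{a,n}(p)
=\langle P_{(n)}(\widetilde{a}\,b_{p,n-1}),b_{p,n-1}\rangle
=\langle \widetilde{a}\,b_{p,n-1},P_{(n)}b_{p,n-1}\rangle
=\langle \widetilde{a}\,b_{p,n-1},b_{p,n-1}\rangle.
\]
Setting $d=p-n+1$, so that $(n-1)+d=p$, this inner product is by the definition of $\beta_{a,d,j,k}$ equal to $\beta_{a,p-n+1,n-1,n-1}$, which gives the first stated equality.

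The integral formula then follows by substituting $j=k=n-1$ and $d=p-n+1$ into~\eqref{eq:beta_as_integral}, after noting that $\min\{n-1,(n-1)+d\}=\min\{n-1,p\}=\min\{p,n-1\}$ and $|d|=|p-n+1|$, so the two Laguerre factors collapse to $\bigl(\ell_{\min\{p,n-1\}}^{(|p-n+1|)}\bigr)^2$. The argument is essentially pure assembly once the infrastructure of Section~\ref{sec:radial} is in place; I expect no genuine obstacle, with the only point meriting care being this index match $\min\{p,n-1\}$ and $|p-n+1|$, needed to confirm that the specialization of~\eqref{eq:beta_as_integral} reproduces exactly the integrand claimed in~\eqref{eq:radial_Toeplitz_eigenvalues_true_poly_Fock}.
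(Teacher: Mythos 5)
Your argument is correct and follows essentially the same route as the paper: membership in $\cR_{(n)}$ via Corollary~\ref{cor:Toeplitz_with_invariant_symbol_is_invariant_under_group_representation}, diagonality via Theorem~\ref{thm:radial_true_polyanalytic_Fock}, and the eigenvalue formula by reducing $\langle T_{(n),\widetilde{a}}b_{p,n-1},b_{p,n-1}\rangle$ to $\langle \widetilde{a}\,b_{p,n-1},b_{p,n-1}\rangle=\beta_{a,p-n+1,n-1,n-1}$. The only cosmetic difference is that you compute this inner product directly from the self-adjointness of $P_{(n)}$ instead of citing Proposition~\ref{prop:radial_multiplication}, which amounts to the same calculation.
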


\begin{proof}
From Corollary~\ref{cor:Toeplitz_with_invariant_symbol_is_invariant_under_group_representation} we get
$T_{(n),\widetilde{a}}\in\cR_{(n)}$.
Due to Proposition~\ref{prop:radial_multiplication}
and Theorem~\ref{thm:radial_true_polyanalytic_Fock},
\[
\la_{a,n}(p)
=(\Phi_{(n)}(T_{(n),\widetilde{a}}))_p
=\langle T_{(n),\widetilde{a}}b_{p,n-1},b_{p,n-1}\rangle
=\beta_{a,p-n+1,n-1,n-1}.
\qedhere
\]
\end{proof}

Given a class $G\subseteq L^\infty(\bC)$ of generating symbols,
we denote by $\cT_{(n)}(G)$ the C*-subalgebra of $\cB(\cF_{(n)})$
generated by the set $\{T_{(n),g}\colon\ g\in G\}$.
Let $\RB$ be the space of all radial bounded functions on $\bC$,
and $\RBC$ be the space of all radial bounded functions on $\bC$
having a finite limit at infinity.

We are going to describe the algebra $\cT_{(n)}(\RBC)$.

\begin{lem}\label{lem:Laguerre_tend_to_zero}
Let $m\in\bNz$ and $x>0$. Then
\[
\lim_{d\to\infty}\sup_{0\le t\le x}|\ell_m^{(d)}(t)|=0.
\]
\end{lem}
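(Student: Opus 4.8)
The plan is to reduce everything to the explicit expansion~\eqref{eq:generalized_Laguerre_explicit} of $L_m^{(d)}$ and the definition~\eqref{eq:laguerre_function} of $\ell_m^{(d)}$, and then to exploit that a factorial in the denominator decays faster than any exponential times any polynomial. First I would write, for $t\in[0,x]$ and $d\in\bNz$,
\[
|\ell_m^{(d)}(t)|
=\sqrt{\frac{m!}{(m+d)!}}\;t^{d/2}\econstant^{-t/2}
\left|\sum_{k=0}^{m}(-1)^k\binom{m+d}{m-k}\frac{t^k}{k!}\right|,
\]
and bound the elementary factors uniformly on $[0,x]$: clearly $\econstant^{-t/2}\le 1$, and since $1+x\ge 1$ and $1+x\ge x$ one has $t^{d/2}\le(1+x)^{d/2}$ for every $t\in[0,x]$, irrespective of whether $x$ is larger or smaller than $1$.

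Next I would control the Laguerre part by a polynomial in $d$. For each fixed $k$ the coefficient
\[
\binom{m+d}{m-k}=\frac{(m+d)(m+d-1)\cdots(d+k+1)}{(m-k)!}
\]
is, as a function of $d$, a polynomial of degree $m-k$; hence, putting $P_{m,x}(d)\eqdef\sum_{k=0}^{m}\binom{m+d}{m-k}\frac{x^k}{k!}$, the triangle inequality gives $\bigl|\sum_{k}(-1)^k\binom{m+d}{m-k}\frac{t^k}{k!}\bigr|\le P_{m,x}(d)$ for all $t\in[0,x]$, where $P_{m,x}$ is a polynomial of degree $m$ whose coefficients depend only on $m$ and $x$. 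For the prefactor I would use the trivial bound $(m+d)!\ge d!$, so that $\sqrt{m!/(m+d)!}\le\sqrt{m!/d!}$. Combining the three estimates yields
\[
\sup_{0\le t\le x}|\ell_m^{(d)}(t)|
\le\sqrt{m!}\;P_{m,x}(d)\,\frac{(1+x)^{d/2}}{\sqrt{d!}}
=\sqrt{m!}\;P_{m,x}(d)\sqrt{\frac{(1+x)^d}{d!}}.
\]

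The only real content is the final limit. Since $(1+x)^d/d!$ is the general term of the convergent series for $\econstant^{1+x}$, it tends to $0$ as $d\to\infty$; moreover this factorial decay is super-exponential, so it dominates the polynomial factor $P_{m,x}(d)$. Concretely, once $d$ is large enough that $d!\ge\bigl(2(1+x)\bigr)^d$ one has $(1+x)^d/d!\le 2^{-d}$, whence the right-hand side is at most $\sqrt{m!}\,P_{m,x}(d)\,2^{-d/2}$, which converges to $0$. I expect the only place demanding care is the bookkeeping of the $d$-dependence: making sure the polynomial bound $P_{m,x}$ is genuinely uniform in $t\in[0,x]$ and independent of $d$, and that the competing factors are correctly matched so that the factorial decay is seen to outrun both the polynomial growth of $P_{m,x}(d)$ and the exponential growth $(1+x)^{d/2}$.
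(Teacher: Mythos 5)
Your proposal is correct and follows essentially the same route as the paper: write $\ell_m^{(d)}$ out via the explicit expansion \eqref{eq:generalized_Laguerre_explicit}, bound everything uniformly on $[0,x]$ by a polynomial in $d$ times an exponential $(1+x)^{d/2}$, and let the square root of a factorial in the denominator kill both. The only cosmetic difference is that you discard part of the factorial via $(m+d)!\ge d!$ and keep the binomial coefficients as a degree-$m$ polynomial $P_{m,x}(d)$, whereas the paper bounds $\binom{m+d}{m-j}\le(m+d)!/(d+j)!\le(m+d)^m$ and retains $\sqrt{(m+d)!}$; both variants yield the same conclusion.
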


\begin{proof}
For each $t<x$, we write $\ell_{m}^{(d)}(t)$ explicitly
by~\eqref{eq:laguerre_function} and \eqref{eq:generalized_Laguerre_explicit},
then apply simple upper bounds:
\begin{align*}
|\ell_{m}^{(d)}(t)|
&=\sqrt{\frac{m!}{(m+d)!}}\,|t^{\frac{d}{2}}\econstant^{-\frac{t}{2}}L_{m}^{(d)}(t)|
\le\sqrt{\frac{m!}{(m+d)!}}\econstant^{-t/2}
\sum_{j=0}^{m}\binom{m+d}{m-j}\frac{t^{j+\frac{d}{2}}}{j!}\\
&\leq
\sqrt{\frac{m!}{(m+d)!}}\sum_{j=0}^{m}\frac{(m+d)!}{(d+j)!}t^{j+\frac{d}{2}}
\leq
(m+1)\sqrt{m!}\,\frac{(m+d)^{m}\,(1+t)^{m+\frac{d}{2}}}{\sqrt{(m+d)!}}.
\end{align*}
Then,
\[
\sup_{0\le t\le x}|\ell_m^{(d)}(t)|
\le \frac{\sqrt{m!}\,(m+1)\,(m+d)^m\,(1+x)^{m+\frac{d}{2}}}{\sqrt{(m+d)!}},
\]
and the last expression tends to $0$ as $d$ tends to $\infty$.
\end{proof}

The following lemma and proposition are similar to
\cite[Lemma~7.2.3 and Theorem~7.2.4]{Vasilevski2008book}.

\begin{lem}\label{lem:limit_beta}
Let $a\in L^\infty([0,+\infty))$, $v\in\bC$,
and $\lim\limits_{r\to+\infty}a(r)=v$.
Then
\begin{equation}\label{eq:limit_beta}
\lim_{d\to+\infty}\beta_{a,d,j,k}=\delta_{j,k} v\qquad
(j,k\in\bNz).
\end{equation}
In particular,
\begin{equation}\label{eq:limit_beta_diagonal}
\lim_{p\to\infty}\lambda_{a,n}(p)=v
\qquad(n\in\bN).
\end{equation}
\end{lem}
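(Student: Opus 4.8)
The plan is to reduce the statement to the trivial case of a constant symbol, for which the orthonormality of the normalized Laguerre functions makes \eqref{eq:limit_beta} an exact identity, and then to show that replacing the constant $v$ by $a(\sqrt{t})$ produces only a vanishing correction as $d\to+\infty$. First I would record the orthonormality that is implicit in the earlier formulas: combining the definition \eqref{eq:laguerre_function} with the Laguerre orthogonality relation \eqref{eq:Laguerre_orthogonality} gives
\[
\int_0^{+\infty}\ell_m^{(\alpha)}(t)\,\ell_{m'}^{(\alpha)}(t)\,\dif t=\delta_{m,m'},
\]
so each $\ell_m^{(\alpha)}$ is a unit vector in $L^2([0,+\infty),\dif t)$ and distinct orders are orthogonal. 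Since we take $d\to+\infty$, we may assume $d\ge0$, so that in \eqref{eq:beta_as_integral} we have $\min\{j,j+d\}=j$, $\min\{k,k+d\}=k$, and $|d|=d$; in particular, for the constant symbol $a\equiv v$ the orthonormality yields $\beta_{v,d,j,k}=v\,\delta_{j,k}$ exactly.

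Next I would split $a(\sqrt{t})=v+c(t)$ with $c(t)\eqdef a(\sqrt{t})-v$, which is bounded and satisfies $c(t)\to0$ as $t\to+\infty$ by hypothesis. By linearity of the integral in \eqref{eq:beta_as_integral},
\[
\beta_{a,d,j,k}=v\,\delta_{j,k}+\int_0^{+\infty}c(t)\,\ell_j^{(d)}(t)\,\ell_k^{(d)}(t)\,\dif t,
\]
so it suffices to show the remainder integral, call it $I_d$, tends to $0$. Fix $\varepsilon>0$. Using $c(t)\to0$, choose $x>0$ with $|c(t)|\le\varepsilon$ for all $t\ge x$. On the tail, the Cauchy--Schwarz inequality together with the unit norms of $\ell_j^{(d)}$ and $\ell_k^{(d)}$ gives, uniformly in $d$,
\[
\left|\int_x^{+\infty}c(t)\,\ell_j^{(d)}(t)\,\ell_k^{(d)}(t)\,\dif t\right|
\le\varepsilon\int_0^{+\infty}\bigl|\ell_j^{(d)}(t)\bigr|\,\bigl|\ell_k^{(d)}(t)\bigr|\,\dif t
\le\varepsilon.
\]
The compact part is then handled by Lemma~\ref{lem:Laguerre_tend_to_zero}:
\[
\left|\int_0^x c(t)\,\ell_j^{(d)}(t)\,\ell_k^{(d)}(t)\,\dif t\right|
\le x\,\|c\|_\infty\Bigl(\sup_{0\le t\le x}\bigl|\ell_j^{(d)}(t)\bigr|\Bigr)\Bigl(\sup_{0\le t\le x}\bigl|\ell_k^{(d)}(t)\bigr|\Bigr)\xrightarrow[d\to\infty]{}0
\]
with $x$ fixed. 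Hence $\limsup_{d\to\infty}|I_d|\le\varepsilon$, and as $\varepsilon$ was arbitrary, $I_d\to0$, which proves \eqref{eq:limit_beta}.

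Finally, for \eqref{eq:limit_beta_diagonal} I would specialize to $j=k=n-1$ and invoke \eqref{eq:radial_Toeplitz_eigenvalues_true_poly_Fock}, namely $\lambda_{a,n}(p)=\beta_{a,\,p-n+1,\,n-1,\,n-1}$; as $p\to\infty$ the diagonal index $d=p-n+1\to+\infty$, and the case $j=k$ of \eqref{eq:limit_beta} gives the limit $v$. The main obstacle is the uniform-in-$d$ control of the tail of the integral: this is precisely where the unit $L^2$-norm of the Laguerre functions (a consequence of \eqref{eq:Laguerre_orthogonality}) is indispensable, since it lets the Cauchy--Schwarz estimate absorb the tail independently of $d$, while Lemma~\ref{lem:Laguerre_tend_to_zero} takes care of the compact part once $x$ is fixed.
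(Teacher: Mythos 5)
Your proof is correct and follows essentially the same route as the paper: split off the constant part of the symbol (for which orthonormality of the $\ell_m^{(d)}$ gives $\delta_{j,k}v$ exactly), control the tail of the remainder uniformly in $d$ via Cauchy--Schwarz and the unit $L^2$-norms, and kill the compact part with Lemma~\ref{lem:Laguerre_tend_to_zero}. The only cosmetic difference is that the paper first treats $j=k$ with $v=0$ and then extends to $j\ne k$ by the Schwarz inequality, whereas you handle general $j,k$ in one pass.
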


\begin{proof}
1. First, suppose that $v=0$ and $j=k$.
For every $x>0$ and $d\ge 0$,
\begin{align*}
|\beta_{a,d,j,j}|
&\le \int_0^x |a(\sqrt{t})|\,
\bigl(\ell_j^{(d)}(t)\bigr)^2\,\dif{}t
+\int_x^{+\infty} |a(\sqrt{t})|\,
\bigl(\ell_j^{(d)}(t)\bigr)^2\,\dif{}t
\\
&\le x \|a\|_\infty \left(\,\sup_{0\le t\le x}
|\ell_j^{(d)}(t)|\right)^2
+ \sup_{t>x}|a(\sqrt{t})|.
\end{align*}
Let $\varepsilon>0$.
Using the assumption that $a(r)\to0$
as $r\to+\infty$, we choose $x$ such that the second summand is less than $\varepsilon/2$.
After that, applying Lemma~\ref{lem:Laguerre_tend_to_zero} with this fixed $x$,
we make the first summand less than $\varepsilon/2$.

2. If $v=0$, $j,k\in\bNz$, then
we obtain $\lim_{d\to+\infty}\beta_{a,d,j,k}=0$
by applying the Schwarz inequality
and the result of the first part of this proof.

3. For general $v$ in $\bC$,
we rewrite $a$ in the form
$(a-v 1_{(0,+\infty)})+v 1_{(0,+\infty)}$.
Since
\[
\beta_{1_{(0,+\infty)},d,j,k}
=\int_0^{+\infty}\ell_j^{(d)}(t) \ell_k^{(d)}(t)\,\dif{}t
=\delta_{j,k},
\]
the limit relation \eqref{eq:limit_beta} follows
from the result of the second part of this proof.
\end{proof}

\begin{prop}\label{prop:Cstar_Toeplitz_radial_converging_true_poly_Fock}
The C*-algebra $\cT_{(n)}(\RBC)$ is isometrically isomorphic to $c(\bNz)$.
\end{prop}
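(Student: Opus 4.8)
The plan is to identify $\cT_{(n)}(\RBC)$ with the C*-algebra $c(\bNz)$ of convergent sequences via the isometric isomorphism $\Phi_{(n)}\colon\cR_{(n)}\to\ell^\infty(\bNz)$ from Theorem~\ref{thm:radial_true_polyanalytic_Fock}. The key observation is that $\Phi_{(n)}$ sends each Toeplitz operator $T_{(n),\widetilde a}$ (for $a\in\RBC$) to its eigenvalue sequence $\lambda_{a,n}=(\lambda_{a,n}(p))_{p=0}^\infty$, computed in Proposition~\ref{prop:radial_Toeplitz_eigenvalues_true_poly_Fock}, and that by Lemma~\ref{lem:limit_beta} (formula~\eqref{eq:limit_beta_diagonal}) this sequence converges, with $\lim_{p\to\infty}\lambda_{a,n}(p)=v$ where $v=\lim_{r\to+\infty}a(r)$. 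Thus $\Phi_{(n)}$ maps the generators $\{T_{(n),\widetilde a}\colon a\in\RBC\}$ into $c(\bNz)$, and since $\Phi_{(n)}$ is an isometric $*$-isomorphism of C*-algebras, it carries $\cT_{(n)}(\RBC)$ isometrically onto the C*-subalgebra of $c(\bNz)$ generated by these sequences.

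First I would verify that $\Phi_{(n)}(\cT_{(n)}(\RBC))\subseteq c(\bNz)$. Because $\Phi_{(n)}$ is a $*$-isomorphism and $c(\bNz)$ is a closed $*$-subalgebra of $\ell^\infty(\bNz)$, it suffices to check that every generator lands in $c(\bNz)$; this is exactly formula~\eqref{eq:limit_beta_diagonal}. Taking the closed $*$-subalgebra generated by the images, we obtain $\Phi_{(n)}(\cT_{(n)}(\RBC))\subseteq c(\bNz)$.

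The substantive step is the reverse inclusion: showing that the generated C*-subalgebra is \emph{all} of $c(\bNz)$. For this I would exhibit enough eigenvalue sequences to generate $c(\bNz)$. The standard strategy is to show that the image contains the constant sequence $\mathbf 1=(1,1,\ldots)$ and at least one sequence that separates each pair of points $p\ne p'$ and tends to a limit, then invoke a Stone--Weierstrass argument for $c(\bNz)$ (equivalently, for $C(\bNz\cup\{\infty\})$, the continuous functions on the one-point compactification of $\bNz$). The constant sequence arises from $a\equiv 1$, since $\beta_{1,d,j,k}=\delta_{j,k}$ gives $\lambda_{1,n}(p)=1$ for all $p$. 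For point separation I would choose symbols $a$ in $\RBC$—for instance, characteristic functions of annuli, or radial functions vanishing at infinity—and use the explicit integral~\eqref{eq:radial_Toeplitz_eigenvalues_true_poly_Fock} to produce sequences taking distinct values at distinct indices $p$; the orthonormality of the Laguerre functions underlying $\eqref{eq:beta_as_integral}$ ensures these integrals are not degenerate.

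The main obstacle will be this separation-of-points argument: one must confirm that the eigenvalue sequences $\lambda_{a,n}$, as $a$ ranges over $\RBC$, genuinely separate the points of $\bNz$ and, crucially, that they separate each finite point from the point at infinity (so the generated algebra is $c(\bNz)$ rather than merely the convergent sequences with a \emph{fixed} limit, or the constant sequences). A clean way to settle this is to show that each $\lambda_{a,n}$ with $a$ compactly supported tends to $0$ while $\lambda_{1,n}\equiv 1$, and that by varying the support one realizes, in the quotient $c(\bNz)/c_0(\bNz)\cong\bC$, every limit value, while modulo $c_0(\bNz)$ the differences already generate all of $c_0(\bNz)$ by Stone--Weierstrass on the locally compact space $\bNz$. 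Combining these, the generated C*-algebra contains both $c_0(\bNz)$ and $\mathbf 1$, hence equals $c(\bNz)$, completing the isometric isomorphism $\cT_{(n)}(\RBC)\cong c(\bNz)$.
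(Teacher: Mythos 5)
Your proposal is correct and follows essentially the same route as the paper: transfer via $\Phi_{(n)}$ using Proposition~\ref{prop:radial_Toeplitz_eigenvalues_true_poly_Fock} and Lemma~\ref{lem:limit_beta} to land in $c(\bNz)\cong C(\bNz\cup\{+\infty\})$, then apply Stone--Weierstrass, with the separation of points supplied by characteristic functions $1_{(0,u)}$ (your ``annuli'') and the fact that distinct squared normalized Laguerre functions cannot coincide identically, and separation from $+\infty$ by compactly supported symbols. The paper's proof is exactly this, so no further comparison is needed.
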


\begin{proof}
Recall that $\Phi_{(n)}$
is an isometrical isomorphism
$\cR_{(n)}\to\ell^\infty(\bNz)$
defined by~\eqref{eq:Phi_true}.
By Proposition~\ref{prop:radial_Toeplitz_eigenvalues_true_poly_Fock},
$\Phi_{(n)}(\{T_b\colon\ b\in\RBC\})=\fL$, where
\[
\fL\eqdef \{\lambda_{a,n}\colon\ a\in L^\infty([0,+\infty)),\ 
\exists v\in\bC\ \lim_{r\to+\infty}a(r)=v\}.
\]
So, $\cT_{(n)}(\RBC)$ is isometrically isomorphic
to the C*-subalgebra of $\ell^\infty(\bNz)$
generated by the set $\fL$.
By Lemma~\ref{lem:limit_beta}, $\fL\subseteq c(\bNz)$.
Our objective is to show that the C*-subalgebra of $c(\bNz)$
generated by $\fL$ coincides with $c(\bNz)$.
The space $c(\bNz)$ may be viewed as the C*-algebra
of the continuous functions on the compact $\bNz\cup\{+\infty\}$.
The set $\fL$ is a vector subspace of $c(\bNz)$
which contains the constants and is closed
under the pointwise conjugation.
In order to apply the Stone--Weierstrass theorem,
we have to prove that the set $\fL$
separates the points of $\bNz\cup\{+\infty\}$.
For every $u$ in $(0,+\infty]$,
define $a_u$ to be the characteristic function $1_{(0,u)}$.
Then
\[
\lambda_{a_u,n}(p)
=\int_0^{u^2}\,
\bigl(\ell_{\min\{p,n-1\}}^{(|p-n+1|)}(t)\bigr)^2\,\dif{}t.
\]
Let $p,q\in\bNz$, $p\ne q$.
If $\lambda_{a_u,n}(p)=\lambda_{a_u,n}(q)$
for all $u>0$, then for all $t>0$
\[
\bigl(\ell_{\min\{p,n-1\}}^{(|p-n+1|)}(t)\bigr)^2
=\bigl(\ell_{\min\{q,n-1\}}^{(|q-n+1|)}(t)\bigr)^2,
\]
which is not true.
So, the set $\fL$ separates $p$ and $q$.

Now let $p\in\bNz$ and $q=+\infty$.
Put $u=1$.
Then $\lambda_{a_1,n}(p)>0$, but
$\lambda_{a_1,n}(+\infty)=\lim_{r\to+\infty}a_1(r)=0$.
So, the set $\fL$ separates $p$ and $+\infty$.
\end{proof}

Recall that $\Phi_n\colon\cR_n\to\fM_n$ is defined by~\eqref{eq:Phi}.

\begin{prop}
Let $a\in L^\infty([0,+\infty))$.
Then $T_{n,\widetilde{a}}\in\cR_n$,
and the $d$-th component
of the sequence $\Psi(T_{n,\widetilde{a}})$
is the matrix
\[
\Psi(T_{n,\widetilde{a}})_d
=\bigl[ \beta_{a,d,j,k} \bigr]_{j,k=\max\{0,-d\}}^{n-1}.
\]
\end{prop}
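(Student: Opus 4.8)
The plan is to prove the statement in two independent steps: first that $T_{n,\widetilde{a}}$ lies in $\cR_n$, and then that its image under the isomorphism $\Phi_n$ of~\eqref{eq:Phi} is the claimed matrix sequence. Both steps are direct applications of results already at hand, so I expect no deep obstacle; the only point requiring care is index bookkeeping in the final identification.

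For the membership, I would simply note that $\widetilde{a}$ is a radial symbol: for every $\tau\in\bT$ one has $\widetilde{a}(\tau z)=a(|\tau z|)=a(|z|)=\widetilde{a}(z)$, and $\widetilde{a}\in L^\infty(\bC)$ because $a\in L^\infty([0,+\infty))$. Proposition~\ref{prop:radial_Toeplitz_operator} then yields $T_{n,\widetilde{a}}\in\cR_n$ immediately. (Equivalently, one may invoke Corollary~\ref{cor:Toeplitz_with_invariant_symbol_is_invariant_under_group_representation} with $H=\cF_n$ and $\rho_H(\tau)=R_{n,\tau}$.) Since $T_{n,\widetilde{a}}$ is bounded, its matrix under $\Phi_n$ is automatically a bounded matrix sequence, so the $d$-th block is well defined.

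To compute that block, recall that $\Phi_n$ represents $S\in\cR_n$ by the matrices whose $(j,k)$ entry, in the diagonal indexed by $d$, is $\langle Sb_{d+k,k},b_{d+j,j}\rangle$, with $j,k$ running over $\max\{0,-d\},\dots,n-1$. Thus I must evaluate $\langle T_{n,\widetilde{a}}\,b_{d+k,k},b_{d+j,j}\rangle$. Writing $T_{n,\widetilde{a}}=P_n M_{\widetilde{a}}$ and using that $P_n$ is self-adjoint while $b_{d+j,j}\in\cF_n$ (here $j\le n-1$, so $P_n b_{d+j,j}=b_{d+j,j}$), this collapses to the ambient inner product $\langle M_{\widetilde{a}}\,b_{d+k,k},b_{d+j,j}\rangle=\langle\widetilde{a}\,b_{d+k,k},b_{d+j,j}\rangle$.

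This last quantity is precisely what Proposition~\ref{prop:radial_multiplication} computes: since $b_{d+k,k}$ and $b_{d+j,j}$ lie on the same diagonal $d$, the Kronecker factor equals $1$ and the value is $\beta_{a,d,k,j}$. The one subtlety I anticipate is the order of the trailing indices: reading off the definition of $\beta$ literally gives $\beta_{a,d,k,j}$ rather than $\beta_{a,d,j,k}$. This is harmless, because the integral representation~\eqref{eq:beta_as_integral} is manifestly symmetric under $j\leftrightarrow k$ (it integrates the product of the two Laguerre functions $\ell^{(|d|)}_{\min\{j,j+d\}}$ and $\ell^{(|d|)}_{\min\{k,k+d\}}$ against $a(\sqrt{t})$), whence $\beta_{a,d,k,j}=\beta_{a,d,j,k}$. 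Therefore the $(j,k)$ entry of the $d$-th block equals $\beta_{a,d,j,k}$, which is exactly the asserted matrix. I expect this symmetry remark, together with the reduction $T_{n,\widetilde{a}}=P_nM_{\widetilde{a}}$, to be the only steps needing explicit justification; the rest is a straightforward assembly of Propositions~\ref{prop:radial_Toeplitz_operator} and~\ref{prop:radial_multiplication}.
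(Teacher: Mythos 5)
Your proposal is correct and follows essentially the same route as the paper, which simply cites Corollary~\ref{cor:Toeplitz_with_invariant_symbol_is_invariant_under_group_representation} for membership in $\cR_n$ and Proposition~\ref{prop:radial_multiplication} for the matrix entries. Your extra remarks --- the reduction $T_{n,\widetilde{a}}=P_nM_{\widetilde{a}}$ with $P_n b_{d+j,j}=b_{d+j,j}$, and the symmetry $\beta_{a,d,k,j}=\beta_{a,d,j,k}$ visible from~\eqref{eq:beta_as_integral} --- are exactly the details the paper leaves implicit.
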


\begin{proof}
Apply Corollary~\ref{cor:Toeplitz_with_invariant_symbol_is_invariant_under_group_representation}
and Proposition~\ref{prop:radial_multiplication}.
\end{proof}

Let $\fC_n$ be the C*-subalgebra of $\fM_n$
that consists of all matrix sequences that have scalar limits:
\[
\fC_n \eqdef \{A\in\fM_n\colon\quad
\exists v\in\bC\quad \lim_{d\to+\infty}A_d=v\,I_n\}.
\]

\begin{prop}
$\Phi_n(\cT_n(\RBC))\subseteq\fC_n$.
\end{prop}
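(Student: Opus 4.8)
The plan is to verify the inclusion first on the generators $T_{n,\widetilde a}$ of $\cT_n(\RBC)$ and then to propagate it through the C*-algebra operations, using that $\Phi_n$ is an isometric $*$-isomorphism and that $\fC_n$ is a norm-closed $*$-subalgebra of $\fM_n$.

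First I would note that every symbol in $\RBC$ is of the form $\widetilde a$ with $a\in L^\infty([0,+\infty))$ admitting a finite limit $v\eqdef\lim_{r\to+\infty}a(r)$, and that by Proposition~\ref{prop:radial_Toeplitz_operator} each $T_{n,\widetilde a}$ is radial; hence $T_{n,\widetilde a}\in\cR_n$ and $\Phi_n(T_{n,\widetilde a})$ is defined. By the preceding proposition its $d$-th component is $[\beta_{a,d,j,k}]_{j,k=\max\{0,-d\}}^{n-1}$, which for every $d\ge0$ is an $n\times n$ matrix indexed by $j,k\in\{0,\dots,n-1\}$. Lemma~\ref{lem:limit_beta} gives $\lim_{d\to+\infty}\beta_{a,d,j,k}=\delta_{j,k}\,v$ for each fixed pair $(j,k)$; since only finitely many entries are involved and all norms on $\Mat_n$ are equivalent, this entrywise convergence upgrades to $\lim_{d\to+\infty}\Phi_n(T_{n,\widetilde a})_d=v\,I_n$ in the operator norm. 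Thus every generator satisfies $\Phi_n(T_{n,\widetilde a})\in\fC_n$.

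To finish, I would observe that $\cR_n$ is a von Neumann algebra containing every $T_{n,\widetilde a}$, so $\cT_n(\RBC)$ is a C*-subalgebra of $\cR_n$ and $\Phi_n$ restricts to an isometric $*$-homomorphism on it. An isometric $*$-homomorphism sends the C*-algebra generated by a set $S$ into the C*-algebra generated by $\Phi_n(S)$; applying this with $S=\{T_{n,\widetilde a}\colon\widetilde a\in\RBC\}$, and using that each $\Phi_n(T_{n,\widetilde a})\in\fC_n$ together with the fact that $\fC_n$ is a C*-subalgebra of $\fM_n$, I conclude $\Phi_n(\cT_n(\RBC))\subseteq\fC_n$. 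I do not expect a genuine obstacle here: the only delicate points are the passage from the entrywise limit of Lemma~\ref{lem:limit_beta} to convergence of the full matrix in $\Mat_n$ (immediate by finite-dimensionality) and the norm-closedness of $\fC_n$, which is what lets it absorb norm limits of products and sums of the generators. The latter rests on the elementary estimate $|v-w|=\|vI_n-wI_n\|\le\|A-B\|$ relating the scalar limits $v,w$ of two elements $A,B\in\fC_n$ to their distance, which shows that the scalar limits of a Cauchy sequence in $\fC_n$ converge and identify the scalar limit of the limiting matrix sequence.
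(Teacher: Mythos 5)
Your proof is correct and takes essentially the same route as the paper, whose entire argument is the citation of Lemma~\ref{lem:limit_beta}. You have simply made explicit the bookkeeping the paper leaves implicit: the upgrade from entrywise to norm convergence in $\Mat_n$, and the verification that $\fC_n$ is a norm-closed $*$-subalgebra of $\fM_n$ so that the inclusion propagates from the generators $T_{n,\widetilde{a}}$ to the whole C*-algebra they generate.
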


\begin{proof}
Follows from Lemma~\ref{lem:limit_beta}.
\end{proof}

We finish this section with a couple of conjectures.

\begin{conj}\label{conj:Cstar_radial_in_true_polyanalytic_Fock}
The C*-algebra $\cT_{(n)}(\RB)$
is isometrically isomorphic to the C*-algebra of
bounded square-root-oscillating sequences.
\end{conj}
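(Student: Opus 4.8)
The plan is to transport the statement to a statement about scalar sequences and then to reduce it to the already-understood analytic case $n=1$. By Theorem~\ref{thm:radial_true_polyanalytic_Fock} the isomorphism $\Phi_{(n)}$ identifies $\cR_{(n)}$ with $\ell^\infty(\bNz)$, and by Proposition~\ref{prop:radial_Toeplitz_eigenvalues_true_poly_Fock} it sends each generator $T_{(n),\widetilde a}$ to its eigenvalue sequence $\la_{a,n}$ from \eqref{eq:radial_Toeplitz_eigenvalues_true_poly_Fock}. Hence $\Phi_{(n)}(\cT_{(n)}(\RB))$ is the closed $*$-subalgebra $\cB_n$ of $\ell^\infty(\bNz)$ generated by $\{\la_{a,n}\colon a\in L^\infty([0,+\infty))\}$, and the whole problem becomes: $\cB_n$ equals the C*-algebra $\cS$ of all bounded sequences $x=(x_p)_{p=0}^\infty$ that are uniformly continuous with respect to the metric $\rho(p,q)=|\sqrt p-\sqrt q|$ (the ``bounded square-root-oscillating sequences''; one checks directly that $\cS$ is a unital C*-subalgebra of $\ell^\infty(\bNz)$ containing $c_0(\bNz)$).

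First I would record two facts that come essentially for free. On one hand $c_0(\bNz)\subseteq\cB_n$: since $\RBC\subseteq\RB$, Proposition~\ref{prop:Cstar_Toeplitz_radial_converging_true_poly_Fock} already gives $c(\bNz)\subseteq\cB_n$, so in particular $c_0(\bNz)\subseteq\cB_n$. On the other hand, for $n=1$ the sequences $\la_{a,1}(p)=\int_0^{+\infty}a(\sqrt t)\,t^p\econstant^{-t}/p!\,\dif t$ are exactly the eigenvalue sequences of the radial Toeplitz operators on the analytic Fock space $\cF_1$, and Esmeral and Maximenko~\cite{EsmeralMaximenko2016} proved that the C*-algebra they generate is precisely $\cS$; that is, $\cB_1=\cS$. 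Thus it suffices to prove that $\cB_n=\cB_1$.

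The bridge between the two cases is the claim that the generators agree modulo $c_0$:
\begin{equation*}
\la_{a,n}-\la_{a,1}\in c_0(\bNz)\qquad(a\in L^\infty([0,+\infty))).
\end{equation*}
Granting this, the quotient map $\pi\colon\ell^\infty(\bNz)\to\ell^\infty(\bNz)/c_0(\bNz)$ sends $\la_{a,n}$ and $\la_{a,1}$ to the same element, so $\pi(\cB_n)=\pi(\cB_1)$; since $\cB_n$ and $\cB_1$ are C*-subalgebras containing $c_0(\bNz)=\ker\pi$, each equals the full preimage of its image, whence $\cB_n=\pi^{-1}(\pi(\cB_n))=\pi^{-1}(\pi(\cB_1))=\cB_1=\cS$. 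The containment $\cB_n\subseteq\cS$ then holds automatically, but it can also be seen directly: writing $\la_{a,n}(p)-\la_{a,n}(q)=\int_0^{+\infty}a(\sqrt t)\,\bigl(\mu_{n,p}(t)-\mu_{n,q}(t)\bigr)\,\dif t$ with $\mu_{n,p}=\bigl(\ell_{\min\{p,n-1\}}^{(|p-n+1|)}\bigr)^2$, one bounds this by $\|a\|_\infty\,\|\mu_{n,p}-\mu_{n,q}\|_{L^1}$ and shows that this total variation tends to $0$ as $\rho(p,q)\to0$.

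The main obstacle is the displayed $c_0$-claim, which is a concrete asymptotic statement about Laguerre functions: it is equivalent to
\[
\lim_{p\to\infty}\int_0^{+\infty}
\Bigl|\bigl(\ell_{n-1}^{(p-n+1)}(t)\bigr)^2-\frac{t^p\econstant^{-t}}{p!}\Bigr|\,\dif t=0 ,
\]
i.e. to the vanishing of the total variation distance between the laws $\mu_{n,p}$ and $\mu_{1,p}=\mathrm{Gamma}(p+1,1)$. The one subtlety is that both measures live on the scale $t\approx p$ with spread of order $\sqrt p$: a moment computation gives mean $p+n$ for $\mu_{n,p}$ against mean $p+1$ for $\mu_{1,p}$ and variance of order $p$ for both, so the centres differ by $n-1=O(1)$, which is $o(\sqrt p)$ and hence negligible after rescaling. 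I would therefore pass to the rescaled variable $t=p+\sqrt p\,\xi$ and invoke a local central limit theorem for the squared Laguerre functions (Plancherel--Rotach / Hilb-type asymptotics) to show that both rescaled densities converge in $L^1(\dif\xi)$ to the same standard Gaussian; since total variation is invariant under the common affine change of variables, this yields $\|\mu_{n,p}-\mu_{1,p}\|_{L^1}\to0$. Making this local limit theorem quantitative and uniform, together with handling the finitely many indices $p<n-1$ (where $\min\{p,n-1\}=p$ and the formula degenerates), is the technical heart of the argument and the step I expect to be the hardest.
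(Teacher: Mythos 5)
First, a point of order: the paper itself does not prove this statement. It is explicitly labelled a conjecture, and the authors only cite \cite{EsmeralMaximenko2016} for the case $n=1$. So your proposal has to stand entirely on its own merits, and unfortunately its central step is false.

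The overall reduction scheme (transport to $\ell^\infty(\bNz)$ via $\Phi_{(n)}$, note $c_0(\bNz)\subseteq\cB_n$, and compare with $\cB_1=\cS$ through the quotient by $c_0$) is sound in principle, but the bridge $\la_{a,n}-\la_{a,1}\in c_0(\bNz)$ fails for every $n\ge2$. The problem is not the $O(1)$ shift of the means, which you correctly dismiss, but the \emph{shape} of the limiting profile at the $\sqrt{p}$ scale. Writing $\alpha=p-n+1$ and $t=\alpha+\sqrt{2\alpha}\,s$, the classical Laguerre-to-Hermite limit
\[
\lim_{\alpha\to\infty}\Bigl(\tfrac{2}{\alpha}\Bigr)^{m/2}
L_m^{(\alpha)}\bigl(\alpha+\sqrt{2\alpha}\,s\bigr)
=\frac{(-1)^m}{m!}\,H_m(s)
\]
shows that the rescaled density $\bigl(\ell_{n-1}^{(p-n+1)}(t)\bigr)^2\,\dif{}t$ converges in $L^1(\dif{}s)$ to $H_{n-1}(s)^2\,\econstant^{-s^2}\big/\bigl(2^{n-1}(n-1)!\sqrt{\pi}\bigr)\,\dif{}s$, whereas the Gamma density $t^p\econstant^{-t}/p!$ converges to $\econstant^{-s^2}/\sqrt{\pi}\,\dif{}s$. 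For $n\ge2$ these limits are genuinely different (for $n=2$ the first is $2s^2\econstant^{-s^2}/\sqrt{\pi}$, which vanishes at $s=0$), so $\|\mu_{n,p}-\mu_{1,p}\|_{L^1}$ tends to a \emph{positive} constant. This is not just a failure of the uniform statement: for the fixed symbol $a(r)=\cos(cr)$ one computes $\la_{a,1}(p)\approx\econstant^{-c^2/8}\cos(c\sqrt{p})$ while $\la_{a,2}(p)\approx(1-c^2/4)\,\econstant^{-c^2/8}\cos(c\sqrt{p})$, so $\la_{a,2}-\la_{a,1}$ oscillates with nonzero amplitude and is not in $c_0(\bNz)$. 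Hence $\pi(\cB_n)=\pi(\cB_1)$ cannot be obtained generator-by-generator. The easier half of your argument (that each $\la_{a,n}$ is uniformly continuous with respect to $|\sqrt{p}-\sqrt{q}|$, giving $\cB_n\subseteq\cS$) looks salvageable by the same concentration estimates, but the reverse inclusion — the actual content of the conjecture — needs a different idea, e.g.\ showing that modulo $c_0$ the family $\{\la_{a,n}\colon a\in L^\infty\}$ still generates the full quotient $\cS/c_0$ despite averaging against the Hermite-weighted profile rather than the Gaussian one.
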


The concept of square-root-oscillating sequences
and a proof of Conjecture~\ref{conj:Cstar_radial_in_true_polyanalytic_Fock} for $n=1$ can be found in \cite{EsmeralMaximenko2016}.

\begin{conj}\label{conj:Cstar_radial_converging_in_polyanalytic_Fock}
$\Phi_n(\cT_n(\RBC))=\fC_n$.
\end{conj}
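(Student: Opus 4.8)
The plan is to prove the reverse inclusion $\fC_n\subseteq\cA$, where $\cA\eqdef\Phi_n(\cT_n(\RBC))$; combined with the inclusion already established, this gives the asserted equality. First note that $\cA$ is a unital C*-subalgebra: the symbol $a\equiv1$ satisfies $\beta_{1,d,j,k}=\delta_{j,k}$, so $T_{n,\widetilde1}=I$ and $\Phi_n(T_{n,\widetilde1})$ is the identity of $\fM_n$. The algebra $\fC_n$ is liminal, with every irreducible representation finite-dimensional: for each $d\ge-n+1$ the block evaluation $\pi_d\colon(A_d)\mapsto A_d\in\Mat_{m_d}$, where $m_d\eqdef\min\{n,n+d\}$, together with the character $\pi_\infty\colon(A_d)\mapsto v$ determined by $\lim_{d\to\infty}A_d=vI_n$. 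Its spectrum $\{-n+1,-n+2,\dots\}\cup\{\infty\}$ is Hausdorff, so the Stone--Weierstrass theorem applies, and it suffices to show that $\cA$ separates the pure states of $\fC_n$, namely the vector states of the $\pi_d$ and the state $\pi_\infty$.

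The main step is that each block evaluation is already onto, $\pi_d(\cA)=\Mat_{m_d}$ for every $d$; this is exactly the separation of the pure states carried by a single block. By \eqref{eq:beta_as_integral} and Proposition~\ref{prop:radial_multiplication}, the $d$-th block of $\Phi_n(T_{n,\widetilde a})$ is the outer product $\int_0^{+\infty}a(\sqrt t)\,\psi_d(t)\psi_d(t)^{\top}\,\dif t$, where $\psi_d(t)\eqdef(\ell_i^{(|d|)}(t))_{i=0}^{m_d-1}$. Choosing $a$ so that $t\mapsto a(\sqrt t)$ is a nonnegative bump of unit integral concentrated near a point $t_0>0$ (a symbol that lies in $\RBC$, its limit at infinity being $0$) and letting the bump shrink, one obtains in the closed algebra $\pi_d(\cA)$ the rank-one matrix $\psi_d(t_0)\psi_d(t_0)^{\top}$, for every $t_0>0$. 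Now $\ell_0^{(|d|)}(t_0)>0$, so $\psi_d(t_0)\ne0$, and by \eqref{eq:laguerre_function}--\eqref{eq:Laguerre_orthogonality} the functions $\ell_0^{(|d|)},\dots,\ell_{m_d-1}^{(|d|)}$ are orthonormal, hence linearly independent; consequently $\{\psi_d(t_0)\colon t_0>0\}$ spans $\bC^{m_d}$. A subspace invariant under all these rank-one matrices must, for each $t_0$, either contain $\psi_d(t_0)$ or be orthogonal to it; by continuity of $t_0\mapsto\psi_d(t_0)$ and connectedness of $(0,+\infty)$ a single alternative holds throughout, and the spanning property then forces the subspace to be $\{0\}$ or $\bC^{m_d}$. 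Thus this self-adjoint family acts irreducibly and generates all of $\Mat_{m_d}$.

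It remains to separate pure states belonging to distinct blocks and to separate each block from $\pi_\infty$. Whenever $m_d\ne m_{d'}$ the representations $\pi_d$ and $\pi_{d'}$ differ in dimension, hence are inequivalent; the only equidimensional pairs are $d,d'\ge0$ with $d\ne d'$, and these are distinguished by the unitary invariant $\operatorname{tr}\pi_d(T_{n,\widetilde a})=\int_0^{+\infty}a(\sqrt t)\,\Sigma_d(t)\,\dif t$, with $\Sigma_d\eqdef\sum_{i=0}^{m_d-1}(\ell_i^{(|d|)})^2$: near the origin $\Sigma_d(t)\sim C_d\,t^{|d|}$ with $C_d>0$, so $|d|\ne|d'|$ gives $\Sigma_d\ne\Sigma_{d'}$ and a suitable $a$ separates the blocks. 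Finally, for any finite $d$ a symbol with vanishing limit at infinity but $\int_0^{+\infty}a(\sqrt t)\,\Sigma_d(t)\,\dif t\ne0$ yields $\pi_\infty=0\ne\operatorname{tr}\pi_d$. Hence $\cA$ separates all pure states of $\fC_n$, and Stone--Weierstrass gives $\cA=\fC_n$.

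The delicate point is the appeal to the noncommutative Stone--Weierstrass theorem; in the elementary spirit of this paper one would rather glue the blocks by hand, and this is where the real difficulty lies. Suppose one could fix a self-adjoint radial Toeplitz operator $h=T_{n,\widetilde a}$ whose blocks $h_d=[\beta_{a,d,j,k}]$ have pairwise disjoint spectra, accumulating only at the scalar limit $vI_n$ provided by Lemma~\ref{lem:limit_beta}. Then for each $d_0$ the indicator of $\sigma(h_{d_0})$ is continuous on $\sigma(h)$, and continuous functional calculus produces a projection $e_{d_0}\in\cA$ equal to $I_{m_{d_0}}$ in the block $d_0$ and to $0$ elsewhere. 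Compressing by $e_{d_0}$ and invoking $\pi_{d_0}(\cA)=\Mat_{m_{d_0}}$ shows that $e_{d_0}\cA e_{d_0}$ is the whole minimal ideal $K_{d_0}\cong\Mat_{m_{d_0}}$ of $\fC_n$; ranging over $d_0$ and adjoining the scalars from constant symbols then recovers all of $\fC_n$. The hard part is exactly this spectral separation of the blocks by a single bounded radial symbol: because the Laguerre functions $\ell_i^{(|d|)}$ and $\ell_i^{(|d\pm1|)}$ overlap, neighbouring blocks are close in norm, and the simultaneous control of all block spectra is the analytic obstruction that the conjecture leaves open.
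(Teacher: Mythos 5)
The statement you are proving is stated in the paper as a \emph{conjecture}; apart from the inclusion $\Phi_n(\cT_n(\RBC))\subseteq\fC_n$ established just before it, the paper offers no proof, so there is nothing to compare your argument against and it must be judged on its own terms. The substantive part of your proposal is correct and is genuinely the right idea: since $\pi_d$ is a $*$-homomorphism, $\pi_d(\cA)$ is a closed subalgebra of $\Mat_{m_d}$, so your shrinking bumps do place every rank-one matrix $\psi_d(t_0)\psi_d(t_0)^{\top}$ in $\pi_d(\cA)$, and your irreducibility/double-commutant argument then gives $\pi_d(\cA)=\Mat_{m_d}$; likewise the dimension count and the trace functionals $a\mapsto\int_0^{+\infty}a(\sqrt t)\,\Sigma_d(t)\,\dif t$ do show that the restrictions $\pi_d|_{\cA}$ and $\pi_\infty|_{\cA}$ are pairwise inequivalent. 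What you have verified is exactly that $\cA$ is a \emph{rich} C*-subalgebra of $\fC_n$ in Dixmier's sense. (One small repair: to pass from inequivalence of $\pi_d|_\cA$ and $\pi_{d'}|_\cA$ to separation of vector states in the two blocks you need the intermediate fact that $(\pi_d\oplus\pi_{d'})(\cA)=\Mat_{m_d}\oplus\Mat_{m_{d'}}$, which follows from the bicommutant theorem in finite dimensions.)

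The gap is that you never close the argument, and your final paragraph misdiagnoses what remains. You appeal to ``the Stone--Weierstrass theorem'' for the noncommutative algebra $\fC_n$ without stating which theorem you mean, and then concede that gluing the blocks is ``the analytic obstruction that the conjecture leaves open,'' proposing (and abandoning) a route through a single symbol whose block spectra are pairwise disjoint. No such symbol is needed, and the gluing is not an open problem: a rich C*-subalgebra of a postliminal C*-algebra is the whole algebra (Dixmier, \emph{C*-algebras}, \S11.1), and $\fC_n$ is liminal. Better still, for this particular algebra --- a unital extension of the $c_0$-sum $J=\{A\in\fM_n\colon\lim_d\|A_d\|=0\}$ by $\bC$ --- the gluing can be done by hand. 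Fix $d_0$; since $\cA\cap J$ is an ideal of $\cA$ whose image under $\pi_{d_0}$ is a nonzero ideal of $\Mat_{m_{d_0}}$ (use a compactly supported symbol), pick a self-adjoint $b\in\cA\cap J$ with $\pi_{d_0}(b)$ a rank-one projection; applying continuous functional calculus with a function vanishing on $[-1/2,1/2]$ and equal to $1$ at $1$ yields an element of $\cA\cap J$ supported on only finitely many blocks whose $d_0$-block is still a rank-one projection; finally, finite-dimensional representation theory together with the pairwise inequivalence of the fibre representations (restricted to the finite-dimensional ideal $\cA\cap I_F$, where $I_F$ is the sum of the surviving blocks) isolates the full block $\Mat_{m_{d_0}}$ inside $\cA$. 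Summing over $d_0$ and adjoining the identity gives $\fC_n\subseteq\cA$. In short: your fibrewise analysis is the real content and is sound, but as written the proposal stops short of a proof precisely at the step you incorrectly declare to be the open obstruction.
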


Various results, similar to Conjecture~\ref{conj:Cstar_radial_converging_in_polyanalytic_Fock},
but for Toeplitz operators in other spaces of functions
or with generating symbols invariant under other group actions, were proved by Loaiza, Lozano, Ram\'irez Ortega,
S\'anchez Nungaray,
Gonz\'alez-Flores, L\'opez-Mart\'inez,
and Arroyo-Neri
\cite{LoaizaLozano2014,RamirezSanchez2015,
LoaizaRamirez2017,SanchezGonzalezLopezArroyo2018}.

\section*{Acknowledgements}

The authors are grateful to the CONACYT (Mexico) scholarships
and to IPN-SIP projects (Instituto Polit\'{e}cnico Nacional, Mexico) for the financial support.
This research is inspired by many works of Nikolai Vasilevski.
We also thank Jorge Iv\'{a}n Correo Rosas
for discussions of the proof of Proposition~\ref{prop:basis_in_Fn}.

\noindent
Egor A. Maximenko\newline
Instituto Polit\'{e}cnico Nacional\newline
Escuela Superior de F\'{i}sica y Matem\'{a}ticas\newline
Ciudad de M\'{e}xico\newline
Mexico\newline
e-mail: emaximenko@ipn.mx\newline
https://orcid.org/0000-0002-1497-4338

\medskip\noindent
Ana Mar\'{i}a Teller\'{i}a-Romero\newline
Instituto Polit\'{e}cnico Nacional\newline
Escuela Superior de F\'{i}sica y Matem\'{a}ticas\newline
Ciudad de M\'{e}xico\newline
Mexico\newline
e-mail: anamariatelleriaromero@gmail.com\newline
https://orcid.org/0000-0002-9821-9398

\noindent
Apartado Postal 07730\newline
Apartado Postal 07730


\begin{thebibliography}{35}

\bibitem{Abreu2010}
L.D.~Abreu,
On the structure of Gabor and super Gabor spaces,
\journaltitle{Monatsh. Math.}
\volumeyearpages{161}{2010}{237--253},
\doi{10.1007/s00605-009-0177-0}.

\bibitem{AbreuFeichtinger2014}
L.D.~Abreu and H.G.~Feichtinger,
Function spaces of polyanalytic functions,
\booktitle{Harmonic and Complex Analysis and its Applications},
Birkh\"{a}user, 2014, 1--38,
\doi{10.1007/978-3-319-01806-5_1}.

\bibitem{AliBagarelloGazeau2015}
A.T.~Ali, F.~Bagarello, and J.P.~Gazeau,
D-pseudo-bosons, complex Hermite polynomials,
and integral quantization,
\journaltitle{Symmetry Integr. Geom.}
\volumeyearpages{11}{2015}{078, 23 pages},
\doi{10.3842/SIGMA.2015.078}.

\bibitem{AskourIntissarMouayn1997}
N.~Askour, A.~Intissar, and Z.~Mouayn,
Explicit formulas for reproducing kernels of generalized Bargmann spaces,
\journaltitle{C. R. Acad. Sci. Paris, Ser. I}
\volumeyearpages{325}{1997}{707--712},
\doi{10.1016/S0764-4442(97)80045-6}.

\bibitem{Balk1991}
M.B.~Balk,
\booktitle{Polyanalytic Functions},
Akad.-Verl., 1991.

\bibitem{BauerHerreraVasilevski2014}
W.~Bauer, C.~Herrera Ya\~nez, and N.~Vasilevski,
Eigenvalue characterization of radial operators
on weighted Bergman spaces over the unit ball,
\journaltitle{Integr. Equat. Oper. Th.}
\volumeyearpages{78}{2014}{1--30},
\doi{10.1007/s00020-013-2101-1}.

\bibitem{BergerCoburn1986}
C.A.~Berger, L.A.~Coburn,
Toeplitz operators and quantum mechanics,
\journaltitle{J. Funct. Anal.}
\volumeyearpages{68}{1986}{273--299},
\doi{10.1016/0022-1236(86)90099-6}.

\bibitem{DawsonOlafssonQuiroga2015}
M.~Dawson, G.~\'{O}lafsson, and R.~Quiroga-Barranco,
Commuting Toeplitz operators on bounded symmetric domains and multiplicity-free restrictions of holomorphic discrete series,
\journaltitle{J. Funct. Anal.}
\volumeyearpages{268}{2015}{1711--1732},
\doi{10.1016/j.jfa.2014.12.002}.


\bibitem{Englis2006}
M.~Engli\v{s},
Berezin and Berezin-Toeplitz quantizations
for general function spaces,
\journaltitle{Rev. Mat. Complut.}
\volumeyearpages{19}{2006}{385--430},
\myurl{http://eudml.org/doc/41908}.

\bibitem{EsmeralMaximenko2016}
K.~Esmeral and E.~Maximenko,
Radial Toeplitz operators on the Fock space
and square-root-slowly oscillating sequences,
\journaltitle{Complex Anal. Oper. Th.}
\volumeyearpages{10}{2016}{1655--1677},
\doi{10.1007/s11785-016-0557-0}.

\bibitem{GrudskyQuirogaVasilevski2006}
S.~Grudsky, R.~Quiroga-Barranco, and N.~Vasilevski,
Commutative C*-algebras of Toeplitz operators
and quantization on the unit disk,
\journaltitle{J. Funct. Anal.}
\volumeyearpages{234}{2006}{1--44},
\doi{10.1016/j.jfa.2005.11.015}.

\bibitem{GrudskyMaximenkoVasilevski2013}
S.M.~Grudsky, E.A.~Maximenko, and N.L.~Vasilevski,
Radial Toeplitz operators on the unit ball and slowly oscillating sequences,
\journaltitle{Commun. Math. Anal.}
\volumeissueyearpages{14}{2}{2013}{77--94},
\myurl{https://projecteuclid.org/euclid.cma/1356039033}.

\bibitem{GrudskyVasilevski2002}
S.~Grudsky and N.~Vasilevski,
Toeplitz operators on the Fock space:
Radial component effects,
\journaltitle{Integr. Equat. Oper. Th.}
\volumeyearpages{44}{2002}{10--37},
\doi{10.1007/BF01197858}.

\bibitem{HaimiHedenmalm2013}
A.~Haimi and H.~Hedenmalm,
The polyanalytic Ginibre ensembles,
\journaltitle{J. Stat. Phys.}
\volumeyearpages{153}{2013}{10--47},
\doi{10.1007/s10955-013-0813-x}.

\bibitem{HerreraVasilevskiMaximenko2015}
C.~Herrera Ya\~{n}ez, N.~Vasilevski, and E.A.~Maximenko,
Radial Toeplitz operators revisited:
Discretization of the vertical case
\journaltitle{Integr. Equat. Oper. Th.}
\volumeyearpages{83}{2015}{49--60},
\doi{10.1007/s00020-014-2213-2}.

\bibitem{Hutnik2008}
O.~Hutn\'{i}k,
On the structure of the space of wavelet transforms,
\journaltitle{C. R. Acad. Sci. Paris, Ser. I}
\volumeyearpages{346}{2008}{649--652},
\doi{10.1016/j.crma.2008.04.013}.

\bibitem{Hutnik2010}
O.~Hutn\'{i}k,
A note on wavelet subspaces,
\journaltitle{Monatsh. Math.}
\volumeyearpages{160}{2010}{59--72},
\doi{10.1007/s00605-008-0084-9}.

\bibitem{HutnikMaximenkoMiskova2016}
O.~Hutn\'{i}k, E.~Maximenko, and A.~Mi\v{s}kov\'{a},
Toeplitz localization operators:
spectral functions density,
\journaltitle{Complex Anal. Oper. Th.}
\volumeyearpages{10}{2016}{1757--1774},
\doi{10.1007/s11785-016-0564-1}.

\bibitem{HutnikHutnikova2015}
O.~Hutn\'{i}k and M.~Hutn\'{i}kov\'{a},
Toeplitz operators on poly-analytic spaces via time-scale analysis,
\journaltitle{Oper. Matrices}
\volumeyearpages{8}{2015}{1107--1129},
\doi{10.7153/oam-08-62}.

\bibitem{KorenblumZhu1995}
B.~Korenblum and K.~Zhu,
An application of Tauberian theorems to Toeplitz operators,
\journaltitle{J. Oper. Th.}
\volumeyearpages{33}{1995}{353--361},
\myurl{https://www.jstor.org/stable/24714916}.

\bibitem{LoaizaLozano2013}
M.~Loaiza and C.~Lozano,
On C*-algebras of Toeplitz operators on the harmonic Bergman space,
\journaltitle{Integr. Equat. Oper. Th.}
\volumeyearpages{76}{2013}{105--130},
\doi{10.1007/s00020-013-2046-4}.

\bibitem{LoaizaLozano2014}
M.~Loaiza and C.~Lozano,
On Toeplitz operators on the weighted
harmonic Bergman space on the upper half-plane,
\journaltitle{Complex Anal. Oper. Th.}
\volumeyearpages{9}{2014}{139--165},
\doi{10.1007/s11785-014-0388-9}.

\bibitem{LoaizaRamirez2017}
M.~Loaiza and J.~Ram\'{i}rez-Ortega,
Toeplitz operators with homogeneous symbols acting on the poly-Bergman spaces of the upper half-plane,
\journaltitle{Integr. Equat. Oper. Th.}
\volumeyearpages{87}{2017}{391--410},
\doi{10.1007/s00020-017-2350-5}.

\bibitem{Quiroga2016}
R.~Quiroga-Barranco,
Separately radial and radial Toeplitz operators
on the unit ball and representation theory,
\journaltitle{Bol. Soc. Mat. Mex.}
\volumeyearpages{22}{2016}{605--623}
\doi{10.1007/s40590-016-0111-0}

\bibitem{Ramazanov1999}
A.K.~Ramazanov,
Representation of the space of polyanalytic functions
as a direct sum of orthogonal subspaces.
Application to rational approximations,
\journaltitle{Math. Notes}
\volumeyearpages{66}{1999}{613--627},
\doi{10.1007/BF02674203}.


\bibitem{RamirezSanchez2015}
J.~Ram\'{i}rez Ortega and A.~S\'{a}nchez-Nungaray,
Toeplitz operators with vertical symbols
acting on the poly-Bergman spaces
of the upper half-plane,
\journaltitle{Complex Anal. Oper. Th.}
\volumeyearpages{9}{2015}{1801--1817},
\doi{10.1007/s11785-015-0469-4}.

\bibitem{RozenblumVasilevski2018}
G.~Rozenblum and N.L.~Vasilevski,
Toeplitz operators via sesquilinear forms,
\booktitle{Operator Theory: Adv. and Appl.}
\myvolume{262}, Birkh\"{a}user, 2018, 287--304.
\doi{10.1007/978-3-319-62527-0_9}.

\bibitem{Sakai1971}
S.~Sakai,
$C^\ast$-algebras and $W^\ast$-algebras,
Springer-Verlag, 1971.

\bibitem{SanchezGonzalezLopezArroyo2018}
A.~S\'anchez-Nungaray, C.~Gonz\'alez-Flores,
R.R.~L\'opez-Mart\'inez, and J.L.~Arroyo-Neri,
Toeplitz operators with horizontal symbols
acting on the poly-Fock spaces,
\journaltitle{J. Funct. Spaces}
\volumeyearpages{2018}{2018}{Article ID 8031259, 8 pages},
\doi{10.1155/2018/8031259}.

\bibitem{Shigekawa1987}
I.~Shigekawa,
Eigenvalue problems for the Schr\"{o}dinger operator with the magnetic field on a compact Riemannian manifold,
\journaltitle{J. Funct. Anal.}
\volumeyearpages{75}{1987}{92--127},
\doi{10.1016/0022-1236(87)90108-X}.

\bibitem{Stroethoff1997}
K.~Stroethoff,
The Berezin transform and operators
on spaces of analytic functions,
\journaltitle{Banach Center Publ.}
\volumeyearpages{38}{1997}{361--380},
\doi{10.4064/-38-1-361-380}.

\bibitem{Suarez2008}
D.~Su\'{a}rez,
The eigenvalues of limits of radial Toeplitz operators,
\journaltitle{Bull. Lond. Math. Soc.}
\volumeyearpages{40}{2008}{631--641},
\doi{10.1112/blms/bdn042}.

\bibitem{Vasilevski2000}
N.L.~Vasilevski,
Poly-Fock spaces,
\booktitle{Operator Theory: Adv. and Appl.}
\myvolume{117}, Birkh\"{a}user, 2000, 371--386,
\doi{10.1007/978-3-0348-8403-7_28}.

\bibitem{Vasilevski2008book}
N.L.~Vasilevski,
\booktitle{Commutative Algebras of Toeplitz Operators
on the Bergman Space},
Birkh\"{a}user, 2008,
\doi{10.1007/978-3-7643-8726-6}.

\bibitem{Zhu1993}
K.~Zhu,
\booktitle{An Introduction to Operator Algebras},
CRC Press, 1993.

\bibitem{Zhu2012book}
K.~Zhu,
\booktitle{Analysis on Fock Spaces},
Springer, 2012,
\doi{10.1007/978-1-4419-8801-0}.

\bibitem{Zorboska2003}
N.~Zorboska,
The Berezin transform and radial operators,
\journaltitle{Proc. Amer. Math. Soc.}
\volumeyearpages{131}{2003}{793--800},
\myurl{https://www.jstor.org/stable/1194482}.

\end{thebibliography}
\end{document}